\def\N{{\rm I\kern-0.16em N}}
\def\R{{\rm I\kern-0.16em R}}
\def\E{{\rm I\kern-0.16em E}}
\def\Z{{\mathbb{Z}}}
\numberwithin{equation}{section}
\font\eka=cmex10
\def\ind{\mathrel{\hbox{\rlap{%
\hbox to 7.5pt{\hrulefill}}\raise6.6pt\hbox{\eka\char'167}}}}
\begin{document}

\newtheorem{thm}{Theorem}[section]
\newtheorem{defn}[thm]{Definition}
\newtheorem{prop}[thm]{Proposition}
\newtheorem{cor}[thm]{Corollary}
\newtheorem{rmk}[thm]{Remark}
\newtheorem{lem}[thm]{Lemma}
\newtheorem{exm}[thm]{Example}

\title{\textbf{Distances between distributions via Stein's method}}

\date{\today}
\renewcommand{\thefootnote}{\fnsymbol{footnote}}
\begin{small}
  \author{Marie Ernst\footnotemark[1] \, and Yvik
    Swan\footnotemark[2]}
\end{small}
\footnotetext[1]{Université de Liège.}

\footnotetext[2]{Université libre de Bruxelles.}

\maketitle

\abstract{}{ We build on the formalism developed in
  \cite{ernst2019first} to propose new representations of solutions to
  Stein equations. We provide new uniform and non uniform bounds on
  these solutions (a.k.a.\ Stein factors). We use these
  representations to obtain representations for differences between
  expectations in terms of solutions to the Stein equations. We apply
  these to compute abstract Stein-type bounds on Kolmogorov, Total
  Variation and Wasserstein distances between arbitrary
  distributions. We apply our results to several illustrative
  examples, and compare our results with current literature on the
  same topic, whenever possible. In all occurrences our results are
  competitive.  } \medskip

\noindent
{\it Keywords:} Stein's method, Stein equations, Stein factors,
Kolmogorov distance, Wasserstein distance, Total variation distance,
Integral probability metrics. 

\smallskip

\noindent


\section{Introduction} 

Consider two random variables $X_n, X_{\infty}\in \R$ such that
$ \mathcal{L}(X_n) \approx \mathcal{L}(X_{\infty})$.  It is of course
of great importance to be able to quantify this proximity in terms of
a relevant quantity $\mathcal{D}(X_n, X_{\infty})$, say. The
literature contains many such discrepancy metrics, including
Hellinger, L\'evy, Prokhorov, $f$-divergences, relative entropy,
... See e.g.\ \cite{gibbs2002choosing} for an overview. In this paper
we shall focus on the following three:
\begin{itemize}
\item {Kolmogorov distance}: $\mathrm{Kol}(X_n, X_{\infty})  = \sup_{z\in \R} \left| \mathbb{P}(X_n \le z) - \mathbb{P}(X_{\infty} \le z) \right| $  
\item {Total Variation distance}: $\mathrm{TV}(X_n, X_{\infty})  = \sup_{B\subset \R} \left| \mathbb{P}(X_n \in B) - \mathbb{P}(X_{\infty} \in B) \right| $
  \item {Wasserstein distance}: $  \mathrm{Wass}(X_n, X_{\infty}) =
    \int_{-\infty}^{\infty} \left| \mathbb{P}(X_n \le
      z) - \mathbb{P}(X_{\infty} \le z) \right| \mathrm{d}z$
  \end{itemize}
 It is generally non-trivial to determine
  bounds $L_1\le \mathcal{D}(X_n, X_{\infty}) \le L_2$ with $L_1, L_2$
  meaningful and computable quantities. Such bounds typically depend
  on the choice of metric, as well as the nature of the   ``target''
  law ($\mathcal{L}(X_{\infty})$, say)  and of the ``approximating''
  law ($\mathcal{L}(X_n)$, say). Famous examples include the
  following: 
  \begin{exm}[Berry-Esseen bound $\sim$ 1942] \label{ex:bebound} Let
    $X_n = n^{-1/2}\sum_{i=1}^n X_i$ with $X_i$ iid mean 0 variance 1
    and $X_{\infty} \sim \mathcal{N}(0,1)$. Then
    ${\mathrm{Kol}(X_n, X_{\infty}) \le {C}{n^{-1/2}} \mathbb{E} \big[
      |X_1|^3 \big]}$ for $C \in (0.40973, 0.4748)$.
\end{exm}





\begin{exm}[Le Cam's inequality $\sim$ 1960]\label{ex:lecam}
    
  Let $X_n = \sum_{i=1}^n X_i$ with
  $X_i\stackrel{\mathrm{ind}}{\sim} \mathrm{Bern}(\theta_i)$ and
  $X_\infty \sim \mathrm{Poi}(\lambda)$ with
  $\lambda=\sum_{i=1}^n \theta_i$. Here and throughout we write
  $a\wedge b = \min(a, b)$ and $a \lor b = \max(a, b)$. Then
  $ { (1 \wedge {\lambda}^{-1}) \sum_{i=1}^n \theta_i^2/32\le
    \mathrm{TV}(X_n, X_{\infty}) \le ({1-e^{-\lambda}}){\lambda}^{-1}
    \sum_{i=1}^n \theta_i^2 } $.
\end{exm}



Examples \ref{ex:bebound} and \ref{ex:lecam} illustrate situations
wherein the target law is easy and explicit while the approximating is
unknown and unfathomable. There is also interest for situations
wherein both the target and the approximating distributions are known
explicitly.
\begin{exm}[\cite{duembgen2019bounding}]\label{ex:dumbgetal}
  \begin{itemize}
  \item  
    $\mathrm{TV}(\mathrm{Hyp}(N, L, n), \mathrm{Bin}(n, L/N)) \le (n-1)/N.$
      \item  $\mathrm{TV}( \mathrm{Bin}(n, \lambda/n), \mathrm{Poi}(\lambda)) \le 1-
    \left( 1-\frac{\lceil \lambda \rceil}{n} \right)^{1/2}.$

\item $\mathrm{TV}(\mathrm{Beta}(a, b), \mathrm{Gamma}(a, a+b)) \le 1-
    \left( 1-\frac{a+1}{a+b} \right)^{1/2}$
  \end{itemize}
\end{exm}

There are many ways to prove estimates such as those provided in
Examples \ref{ex:bebound}, \ref{ex:lecam}, and \ref{ex:dumbgetal},
such as Fourier methods, couplings or, whenever possible, direct
analysis of the densities involved.  In this paper we will consider
the well-known \emph{Stein's method}. Our approach builds upon recent
results from \cite{ernst2019first,ernst2019infinite}. In those papers
it is shown that one can associate to any $X_{\infty}$ \emph{two}
linear operators $\mathcal{T}_{\infty}^{\ell}$ and
$\mathcal{L}_\infty^\ell $ such that the ``Stein identities''
\begin{align}
\label{eq:stid1} 
& \mathrm{Cov}[f(X_\infty), g(X_\infty)] = \mathbb{E}
                \left[-\mathcal{L}_\infty^\ell f(X_\infty)
                \Delta^{-\ell}g(X_\infty) \right]\\
\label{eq:stid2}
&   \mathbb{E} \left[ \big(\mathcal{T}_{\infty}^\ell f(X_\infty) \big)g(X_\infty) \right] =   -  \mathbb{E} \left[ f(X_\infty) \Delta^{-\ell}g(X_\infty) \right]          
\end{align}
are valid for all sufficiently regular functions $f, g$ (here 
$\Delta^{-\ell}$ is a generalized differential operator, see Section
\ref{sec:formalism-1} for explicit expressions).
\begin{exm}
  Take $X_{\infty}$ standard Gaussian with density
  $\varphi(x) = (2\pi)^{-1} e^{-x^2/2}$. Then $\ell=0$,
  $\mathcal{T}_{\infty}^0f(x) = f'(x) - xf(x)$,
  $\mathcal{L}_{\infty}f(x) = e^{x^2/2} \int_{-\infty}^x (f(u) -
  \mathbb{E}[f(X_{\infty})]) \mathrm{d}u$, so that \eqref{eq:stid1} and
  \eqref{eq:stid2} read as
  \begin{align*}
& \mathrm{Cov}[f(X_\infty), g(X_\infty)] = \mathbb{E}
                \left[ \left( e^{X_{\infty}^2/2} \int_{-\infty}^{X_{\infty}}
                   (\mathbb{E}[f(X_{\infty})] - f(u)) e^{-u^2/2} \mathrm{d}u  \right)
                g'(X_\infty) \right]\\
&   \mathbb{E} \left[ \big(f'(X_\infty)-X_{\infty} f(X_\infty) \big)g(X_\infty) \right] =   -  \mathbb{E} \left[ f(X_\infty) g'(X_\infty) \right]         
  \end{align*}
  which hold for all $f \in L^1(\varphi)$ and absolutely continuous
  functions $g$.  Both identities are a straightforward consequence of
  Fubini's theorem.
\end{exm}
If, in \eqref{eq:stid1} or \eqref{eq:stid2}, we take expectations with
respect to $X_n$ rather than $X_{\infty}$, absence of equality in
either identities for some functions $f, g$ indicates absence of
equality between the laws of $X_n$ and $X_{\infty}$.  Stein's method
consists in transforming this observation into estimates on relevant
probability distances between the laws of $X_n$ and $X_{\infty}$.
More precisely, the method advocates to fix $f$ in \eqref{eq:stid1} or
\eqref{eq:stid2} some ``well chosen'' function (e.g.  $f(x) = 1$, but
this is not always ideal) and use the numbers
\begin{align*}
  & \mathcal{S}_A(X_n, X_{\infty}, \mathcal{G}) := 
\sup_{g \in \mathcal{G}} \left|  \mathrm{Cov}[f(X_n), g(X_n)]+ 
 \mathbb{E}  \left[ \big(\mathcal{L}_\infty^\ell f(X_n)\big)
  \Delta^{-\ell}g(X_n) \right] \right|\\
  & \mathcal{S}_B(X_n, X_{\infty}, \mathcal{G}) := 
    \sup_{g \in \mathcal{G}} \left| \mathbb{E} \left[
        \big(\mathcal{T}_{\infty}^\ell f(X_n) \big)g(X_n) + f(X_n)
        \Delta^{-\ell} g(X_n) \right]\right| 
\end{align*}
 (with $\mathcal{G}$
    ``some class of functions'' to be determined) to quantify the
    difference between the laws of $X_n$ and $X_{\infty}$.
    \begin{exm}
      If $X_{\infty}$ is standard normal, fixing $f(x) = x$ in
      \eqref{eq:stid1} (or $f (x) = 1$ in \eqref{eq:stid2}) leads to the
      discrepancy measure
      $ \sup_{g \in \mathcal{G}} \left| \mathbb{E}[g'(X_n)-X_n g(X_n)]
      \right|$ which, in light of Stein's characterization of the
      normal distributon, is 0 if and only if $X_n$ is itself Gaussian
      -- at least when $\mathcal{G}$ is a sufficiently large class of
      test functions. Other choices of $f$ are possible, see
      \cite{goldstein2005distributional}.
    \end{exm}
    Before diving into the study of the numbers
    $ \mathcal{S}_{\bullet}(X_n, X_{\infty}, \mathcal{G}) $, it is
    first necessary to argue as to why such numbers indeed metrize
    convergence in distribution in terms of relevant metrics. To this
    end, it suffices to notice that discrepancies
    $ \mathcal{S}_{\bullet}(X_n, X_{\infty}, \mathcal{G}) $ contain
    (at least formally) 
    any distance that can be represented as an \emph{Integral
      Probability Metric} (IPM):
   \begin{equation}\label{eq:defIPM}
     \mathcal{D}_{\mathcal{H}}(X_n, X_{\infty}) = \sup_{h\in \mathcal{H}}
     |\mathbb{E}h(X_n)- \mathbb{E}h(X_{\infty})|.
   \end{equation}
To see why this holds true, fix $f = \eta$ in \eqref{eq:stid1} or $f = c$
in \eqref{eq:stid2} (the difference in notation is cosmetic but will help
at a later stage) and consider the \emph{Stein equations}
    \begin{align}
     \label{eq:stek1}
    & (\eta(x)-\mathbb{E}\eta(X_{\infty}))g_h(x)+
   \big(\mathcal{L}_\infty^\ell \eta(x)\big) \Delta^{-\ell}g_h(x) =h(x) -
   \mathbb{E}h(X_{\infty})\\
   &\mathcal{T}_{\infty}^\ell c(x) g_h^*(x) + c(x) \Delta^{-\ell} g_h^*(x) = h(x) - \mathbb{E}h(X_{\infty})\label{eq:stek2}
   \end{align}
   for all $x \in \mathcal{S}(p_{\infty})$.  Lemma 2.11 in
   \cite{ernst2019first} guarantees that if $\mathcal{H}$ is
   reasonable, then for any well-chosen $\eta$ or $c$, to every
   $h\in \mathcal{H}$ we can associate (uniquely) a function $g_h$ or
   $g_h^*$ such that either \eqref{eq:stek1} or \eqref{eq:stek2} holds
   at all $x$ in the support of the law of $X_{\infty}$. Let
   $\mathcal{G}_{\mathcal{H}} = \left\{ g_h \, | \, h \in \mathcal{H}
   \right\}$ and $\mathcal{G}^*_{\mathcal{H}}= \left\{ g_h^{\star} \, | \, h \in \mathcal{H}
   \right\}$ be 
   the collection of all these solutions.  Then simple computations
   show that
   $$ \mathcal{D}_{\mathcal{H}}(X_n, X_{\infty}) = \mathcal{S}_{A}(X_n,
   X_{\infty}, \mathcal{G}_{\mathcal{H}}) = \mathcal{S}_{B}(X_n,
   X_{\infty}, \mathcal{G}^*_{\mathcal{H}}).$$ In other words, under
   non-stated regularity conditions which basically require that all
   quantities be defined, the IPMs \eqref{eq:defIPM} can be
   interpreted as specific instances of Stein's discrepancies
   $\mathcal{S}_{\bullet}$.
 \begin{exm}
   Still in the case where $X_{\infty}$ is standard Gaussian, fix
   $ \eta= \mathrm{Id}$ the identity function in \eqref{eq:stek1} (or,
   equivalently, $c=1$ in \eqref{eq:stek2}) and consider the Stein
   equation
     \begin{equation}\label{eq:classgau}
       g'(x) - xg(x) = h(x) - \mathbb{E}h(X_{\infty})
     \end{equation}
     over $x \in \mathbb{R}$. For each $h \in L^1(X_{\infty})$ there
     exists a unique bounded solution given by
     $g_h(x) = e^{x^2/2} \int_{-\infty}^x (h(u) -
     \mathbb{E}h(X_{\infty})) e^{u^2/2} \mathrm{d}u$ (we recognize the
     operator $-\mathcal{L}_{\infty}^0$ from the previous example), so
     that
     \begin{align*}
\mathcal{D}_{\mathcal{H}}(X_n, X_{\infty}) = \sup_{h \in \mathcal{H}}
       \left|  \mathbb{E}[g_h'(X_n) - X_n g_h(X_n)] \right|
     \end{align*}
     and all IPMs with Gaussian target are indeed Stein discrepancies.
   \end{exm}
   Many classical metrics can be represented as IPMs, most notably for
   us the Kolmogorov, Total Variation and Wasserstein distances  
   with  respective classes 
   \begin{align*}
& \mathcal{H}_{\mathrm{Kol}} = \left\{ h(x) = \mathbb{I}[x \in
                    (-\infty, z]]  \mbox{ such that } z \in \mathbb{R} \right\}     \\
& \mathcal{H}_{\mathrm{TV}} = \left\{ h(x) = \mathbb{I}[x \in
                   B]   \mbox{ such that }  B \in \mathcal{B}(\mathbb{R}) \right\}     \\         
& \mathcal{H}_{\mathrm{Wass}} = \mathrm{Lip}(1) = \left\{ h(x)   \mbox{ such that } |h(x) - h(y)| \le |x-y| \mbox{ for all } x, y \in \mathbb{R}\right\}                                  
 \end{align*}
 To summarize what has just been written, the heuristic behind our
 version of Stein's method for a metric of the form \eqref{eq:defIPM}
 is to tackle the problem of bounding an IPM by contemplating the
 identities
\begin{align*}
 \mathcal{D}_{\mathcal{H}}(X_n, X_{\infty}) & =
\sup_{h \in
    \mathcal{H}} \left|  \mathbb{E} \left[   
                                              (\eta(x)-\mathbb{E}\eta(X_{\infty}))g_h(x)+
   \big(\mathcal{L}_\infty^\ell \eta(x)\big) \Delta^{-\ell}g_h(x)  \right]
                                              \right|\\
&   = \sup_{h \in
    \mathcal{H}} \left|  \mathbb{E} \left[   \mathcal{T}_\infty^{\ell}
      c(X_n) g_h(X_n) + c(X_n) \Delta^{-\ell} g_h(X_n)  \right]
                                              \right|
\end{align*}
where $ g_h(x)$ is solution to either \eqref{eq:stek1} (first case) or
\eqref{eq:stek2} (second case).  It remains of course to be able to
choose $\eta$ or $c$ in such a way that the resulting expressions are
tractable \emph{and} the corresponding solutions $g_h$  are well
behaved.

It is now extremely well documented that, for many classical targets
(particularly the normal and Poisson), this approach is powerful
because there are many handles for dealing with the quantities
$\mathcal{S}_{\bullet}$, be it via exchangeable pairs, zero- and size
  bias, Malliavin-Stein, etc.  We refer the reader to
  \cite{barbour1992poisson}, \cite{chen2010normal} and
  \cite{nourdin2012normal} (among many other possible references) for
  an in-depth overview of a broad variety of applications around the
  Gaussian and Poisson cases.
  In this paper, we adopt the abstract formalism developed in
  \cite{ernst2019first,ernst2019infinite} to provide a new point of
  view on the properties of the solutions to equations
  \eqref{eq:stek1} and \eqref{eq:stek2}. Our results are of two main
  types.
   \begin{itemize}
   \item 
   The first, developed in Section \ref{sec:examples}, is of
   a classical nature within the theory on Stein's method, and
   summarized in Proposition \ref{prop:boundsonsol}:  we provide
   explicit uniform and non-uniform bounds on the solutions to Stein
   equations and on their derivatives. In all the examples we have
   considered, our bounds are easily computed and competitive with
   existing bounds (whenever there are competitors available).  For
   instance, applying our bounds to the Gaussian case leads (see
   Example \ref{ex:norm}) to the fact that the solutions to equation
   \eqref{eq:classgau} satisfy
     \begin{align*}
    & |g(x)|   \le \min \left( \kappa_1   \frac{\Phi(x)(1- \Phi(x))}{\varphi(x)}
      , \kappa_2  \right) \le  \min\left(\kappa_1
      \frac{1}{2}\sqrt{\frac{\pi}{2}}, \kappa_{2}\right)\\ 
    & |g'(x)| \le  \kappa_1  \left(1 + |x|\frac{\Phi(x)(1-
      \Phi(x))}{\varphi(x)} \right) \le 2 \kappa_1 \\ 
    & |g'(x)| \le 2 \kappa_2 \min\left(|x|, \frac{\int_{-\infty}^x \Phi(u)
      \mathrm{d}u\int_{x}^\infty(1- \Phi(u)) \mathrm{d}u}{\varphi(x)}\right)
      \le 2\kappa_2  \min \left( \sqrt{\frac{2}{\pi}}, |x|\right)
     \end{align*}
     where $\Phi$ is the standard normal cdf,
     $\kappa_1 \le 2\|h\|_{\infty}$ and
     $\kappa_2 \le \|h'\|_{\infty}$. In the body of the article we
     also compute the bounds the Poisson (Example \ref{ex:pois}) and
     the exponential (Example \ref{ex:exp}).  Other targets are
     covered in the supplementary material to this article.

   \item Our second main result is developed in Section
     \ref{sec:discr}, where
     we 
     propose probabilistic representations of differences between
     expectations which allow to dispense with the need to bound
     solutions to Stein equations. As applications we provide new
     representations for (and bounds on) the Kolmogorov, Total
     Variation and Wasserstein distances whenever the target and the
     approximating random variables are continuous w.r.t.\ the same
     dominating measure. For instance in the case of a Gaussian target
     we obtain (see Example~\ref{sec:stand-norm-targ}) that if
     $X_n \sim p_n$ has support an interval in $\R$ and score function
     $\rho_n(x)$ then
   \begin{align*}
 \mathrm{Kol}(X_n,X_\infty)& = \sup_z \left| \mathbb{E}\left[ (X_n +
    \rho_n(X_n)) \frac{\Phi(X_n\wedge z)\bar \Phi(X_n\vee z)}{\varphi(X_n)} \right]
                             \right|    \\
    & \le  
      \mathbb{E}\left[ |X_n+\rho_n(X_n)| \frac{\Phi(X_n)\bar \Phi(X_n)}{\varphi(X_n)}
    \right] \\
    & \le \frac{1}{2}\sqrt{\frac{\pi}{2}} \mathbb{E}\left[ |X_n+\rho_n(X_n)| 
    \right],
    \end{align*}
    and also provide bounds on Total Variation and Wasserstein
    distances.  We also compare, whenever possible, with other
    available bounds.  Our results appear to be competitive with or
    improve on the current literature on the topic.
   \end{itemize}
    
  The structure of the paper is as follows. We begin by recalling the
  formalism of Stein's method in Section \ref{sec:formalism-1}.
  We discuss the properties of solutions to Stein equations in Section
  \ref{sec:repr}, and provide explicit uniform and non uniform bounds
  in Section \ref{sec:examples}. In Section \ref{sec:discr} we provide
  new representations for and bounds on the IPMs between densities
  sharing a common dominating measure, and we apply these in several
  examples. Most proofs are either omitted or delayed to the
  Appendix. Many more computations are made available in the
  supplementary material.

  \section{Stein operators, equations and solutions}
  \label{sec:formalism}

\subsection{Formalism}
\label{sec:formalism-1}
We start by recalling the formalism introduced in
\cite{ernst2019first}. Let $\mathcal{X}{\in \mathcal{B}(\R)}$ and
equip it with some $\sigma$-algebra $\mathcal{A}$ and $\sigma$-finite
measure $\mu$. Let $X$ be a random variable on $\mathcal{X}$, with
{induced probability measure $\mathbb{P}^X$} which is absolutely
continuous with respect to $\mu$; we denote {by} $p$ the corresponding
probability density function (pdf or pmf), and its support by
$\mathcal{S}(p) = \left\{ x \in \mathcal{X} : p(x)>0\right\}$. We also
let $P$ be the cdp of $p$, and $\bar P = 1-P$ its survival function.
As usual, $L^1(p)$ is the collection of all real valued functions $f$
such that $\mathbb{E}|f(X)| < \infty$.
Although we could in principle
keep the discussion to come very general, in order to make the paper
more concrete and readable we shall often restrict our attention to
distributions satisfying the following Assumption.

\

\noindent \textbf{Assumption A.} The measure $\mu$ is either the
counting measure on $\mathcal{X} = \Z$ or the Lebesgue measure on
$\mathcal{X} = \R$. If $\mu$ is the counting measure then there exist
$a {<} b \in \Z \cup \left\{-\infty, \infty \right\}$ such that
$\mathcal{S}(p) = [a, b]\cap \Z$.  If $\mu$ is the Lebesgue measure
then there exist $a, b \in \R \cup \left\{-\infty, \infty \right\}$
such that ${\mathcal{S}(p)}^{\mathrm{o}} = ]a, b[$ and
$\overline{\mathcal{S}(p)} = [a, b]$.  Moreover, the measure $\mu$ is
not point mass.

\

Let $\ell \in \left\{ -1, 0, 1 \right\}$; we assume this throughout
the paper and do not recall it. In the sequel we shall restrict our
attention to the following three derivative-type operators:
\begin{eqnarray*}
  \Delta^{\ell}f(x) &= \left\{ 
\begin{array}{l l } 
  f'(x), &  \mbox{ if }\ell = 0;\\
  \frac{1}{\ell} (f(x+\ell) - f(x)) & \mbox{ if } \ell \in \{-1, +1\}, \\
 \end{array} \right. 
\end{eqnarray*} 
with $f'(x)$ the weak derivative defined Lebesgue almost everywhere,
$\Delta^{+1} ({\equiv} \Delta^+)$ the classical forward difference and
$\Delta^{-1} ({\equiv} \Delta^-)$ the classical backward
difference. Whenever $\ell=0$ we take $\mu$ as the Lebesgue measure
and speak of the \emph{continuous case}; whenever
$\ell \in \left\{-1, 1 \right\}$ we take $\mu$ as the counting measure
and speak of the \emph{discrete case}. There are two choices of
derivatives in the discrete case, only one in the continuous
case. {We let $\mathrm{dom}(\Delta^{\ell})$ denote the collection
  of functions $f : \R \to \R$ such that $\Delta^{\ell}f(x)$ exists
  and is finite $\mu$-almost surely. In the case $\ell=0$, this
  corresponds to all absolutely continuous functions; in the case
  $\ell = \pm1$ the domain is the collection of all functions on
  $\Z$.}  Finally, throughout the paper, we will use the notation
$a_\ell = \mathbb{I}[\ell=1]$ and $b_\ell = \mathbb{I}[\ell=-1]$.

\begin{defn}[Canonical Stein operators] \label{def:candiscop} Let
  $X \sim p$. The canonical ($\ell$-)Stein operator is
  \begin{equation*}
    \mathcal{T}_p^\ell f(x) = \frac{\Delta^\ell (f(x) p(x))}{p(x)}
  \end{equation*}
  with the convention that $\mathcal{T}_p^\ell f(x)=0$ for all
  $x \notin \mathcal{S}(p)$.   The {canonical
    pseudo-inverse ($\ell$-)Stein operator} is, for $h \in L^1(p)$,  
 \begin{equation}
    \label{eq:caninv}
    \mathcal{L}_p^{\ell}h(x) = \frac{1}{p(x)} \int_a^{x-a_{\ell}} (h(u)  -
    \mathbb{E}[h(X)])p(u) \mu(\mathrm{d}u) = \frac{1}{p(x)}
    \int_{x+b_{\ell}}^{b} (
    \mathbb{E}[h(X)]-h(u))p(u) \mu(\mathrm{d}u)  
\end{equation}
for all $x \in \mathcal{S}(p)$ and 
    $ \mathcal{L}_p^{\ell}h(x) = 0$ for all
    $x \notin \mathcal{S}(p)$.
    If $\ell=1$ (resp., $\ell = -1$) we call the operators forward
    (resp., backward), denoted $\mathcal{T}_p^+$ (resp.,
    $\mathcal{T}_p^-$) and $\mathcal{L}_p^+$ (resp.,
    $\mathcal{L}_p^-$).
\end{defn}

One can check (see \cite{ernst2019first}) the following results.
\begin{thm}[\cite{ernst2019first}]

  Let
  $ \mathcal{F}^{(0)}(p) = \{ f \in L^1 (p) : \mathbb{E}[f(X)]= 0 \}$
  and
  $ \mathcal{F}^{(1)}_\ell (p) = \{ f \in
    \mathrm{dom}(\Delta^\ell) : \Delta^\ell(f p )
    {\mathbb{I}[\mathcal{S}(p)]} \in L^1(\mu)$ and
    $ \int_{\mathcal{S}(p)} \Delta^\ell (fp)(x) \, \mu(\mathrm{d}x) =
    \mathbb{E}\left[ \mathcal{T}_p^{\ell}f(X) \right]= 0 \}$.
  Then $\mathcal{T}_p^{\ell} f \in \mathcal{F}^{(0)}(p)$ for all
  $f \in \mathcal{F}^{(1)}_\ell (p)$ and
  $\mathcal{L}_p^{\ell} h \in \mathcal{F}^{(1)}_\ell (p)$ for all
  $h \in L^1(p)$. Moreover
  $\mathcal{T}_p^{\ell}(\mathcal{L}_p^{\ell}h(x)) = h(x)-\E[h(X)]$ for
  all $x \in \mathcal{S}(p)$ for all $h\in L^1(p)$ and
  $\mathcal{L}_p^{\ell}(\mathcal{T}_p^{\ell}h(x))=h(x)$ on the
  subclass of centred (i.e. $\E[h(X)]=0$) functions in
  $L^1(p) \cap \mathcal{F}_\ell^{(1)}(p)$.
\end{thm}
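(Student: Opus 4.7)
The plan is to verify the four assertions in order, treating the continuous case ($\ell=0$, with $\mu$ Lebesgue) and the discrete cases ($\ell=\pm 1$, with $\mu$ counting) in parallel whenever possible, since the only structural difference is ``fundamental theorem of calculus'' versus ``telescoping sum''. Throughout I will write $\mathcal{S}(p) = [a,b]$ (or its integer analogue under Assumption A) and use that $p$ vanishes off this support.

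\textbf{Claim 1.} For $f \in \mathcal{F}^{(1)}_\ell(p)$ the integrability of $\mathcal{T}_p^\ell f$ under $p$ is immediate, since
$\int_{\mathcal{S}(p)} |\mathcal{T}_p^\ell f(x)|\, p(x)\, \mu(\mathrm{d}x) = \int_{\mathcal{S}(p)} |\Delta^\ell(fp)(x)|\, \mu(\mathrm{d}x) < \infty$ by the defining integrability condition of $\mathcal{F}^{(1)}_\ell(p)$, and the centering $\mathbb{E}[\mathcal{T}_p^\ell f(X)]=0$ is part of the very definition of $\mathcal{F}^{(1)}_\ell(p)$. So $\mathcal{T}_p^\ell f \in \mathcal{F}^{(0)}(p)$.

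\textbf{Claims 2 and 3 together.} Set $F_h(x) := \mathcal{L}_p^\ell h(x)\, p(x) = \int_a^{x-a_\ell} (h(u)-\mathbb{E}[h(X)])\, p(u)\,\mu(\mathrm{d}u)$. By construction $F_h$ is the ``antiderivative'' of $(h-\mathbb{E}[h(X)])\,p$: in the continuous case this is the indefinite Lebesgue integral, hence $F_h$ is absolutely continuous with weak derivative $(h-\mathbb{E}[h(X)])\,p$; in the discrete cases this is a telescoping sum, so $\Delta^{\pm 1} F_h(x)$ is likewise $(h-\mathbb{E}[h(X)])\, p$ at every $x \in \mathcal{S}(p)$. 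Dividing by $p(x)$ immediately yields $\mathcal{T}_p^\ell(\mathcal{L}_p^\ell h)(x) = h(x)-\mathbb{E}[h(X)]$ on $\mathcal{S}(p)$, which is Claim 3; integrating this identity against $p(x)\mu(\mathrm{d}x)$ over $\mathcal{S}(p)$ gives zero (since $h-\mathbb{E}[h(X)]$ is centered and in $L^1(p)$), thus $\int_{\mathcal{S}(p)} \Delta^\ell(\mathcal{L}_p^\ell h \cdot p)\, \mu(\mathrm{d}x)=0$. Integrability of this same quantity in $L^1(\mu)$ is also inherited from $h \in L^1(p)$. These three verifications together prove Claim 2.

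\textbf{Claim 4.} For centered $h \in L^1(p) \cap \mathcal{F}^{(1)}_\ell(p)$, I substitute the definition of $\mathcal{L}_p^\ell$ into $\mathcal{L}_p^\ell(\mathcal{T}_p^\ell h)$. Using that $\mathbb{E}[\mathcal{T}_p^\ell h(X)]=0$ (from membership in $\mathcal{F}^{(1)}_\ell(p)$) and that $\mathcal{T}_p^\ell h(u)\, p(u) = \Delta^\ell(hp)(u)$ on $\mathcal{S}(p)$, the expression collapses by the fundamental theorem of calculus (resp. telescoping) to
\[
\mathcal{L}_p^\ell(\mathcal{T}_p^\ell h)(x) \;=\; \frac{1}{p(x)}\Bigl[(hp)(x) - \lim_{u\downarrow a}(hp)(u)\Bigr],
\]
and the symmetric identity with the upper endpoint arising from the second representation of $\mathcal{L}_p^\ell$ in \eqref{eq:caninv}. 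What remains is to show the boundary contributions $(hp)(a^+)$ and $(hp)(b^-)$ vanish. This is the main obstacle. The strategy is: the condition $\int_{\mathcal{S}(p)}\Delta^\ell(hp)\,\mu(\mathrm{d}x)=0$ together with $hp \in L^1(\mu)$ forces $(hp)(a^+) = (hp)(b^-)$; combined with $hp \in L^1(\mu)$ and Assumption A on the support, this common boundary value must be zero (in the infinite-support case because an $L^1$ absolutely continuous function whose derivative is $L^1$ cannot have nonzero limits at $\pm\infty$; in the finite-support case by the convention that $p$ vanishes at boundary points outside $\mathcal{S}(p)$ together with absolute continuity of $hp$). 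Once the boundary terms are zero, both representations yield $\mathcal{L}_p^\ell(\mathcal{T}_p^\ell h)(x) = h(x)$ on $\mathcal{S}(p)$, completing the proof. The careful handling of these boundary limits, under the minimal hypotheses stated, is the only delicate point; everything else is bookkeeping with the two canonical representations of $\mathcal{L}_p^\ell$.
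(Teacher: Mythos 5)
First, a point of reference: the paper itself does not prove this theorem --- it is imported from \cite{ernst2019first} with the remark ``one can check'' --- so there is no in-paper argument to measure yours against. Your treatment of the first three assertions is the natural one and is correct: the identity $\mathcal{T}_p^{\ell}f\cdot p=\Delta^{\ell}(fp)$ reduces Claim~1 to the definition of $\mathcal{F}^{(1)}_\ell(p)$, and writing $F_h=\mathcal{L}_p^{\ell}h\cdot p$ as the exact antiderivative (resp.\ telescoping sum) of $(h-\mathbb{E}[h(X)])p$ yields Claims~2 and~3 simultaneously.

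The gap is in Claim~4, and it sits exactly where you located it: the boundary term. Your reduction to $\mathcal{L}_p^{\ell}(\mathcal{T}_p^{\ell}h)(x)=h(x)-(hp)(a^{+})/p(x)$ is right, and your disposal of the boundary term works in the discrete case (the telescoping reaches a point outside $\mathcal{S}(p)$, where $p=0$, so $\int_{\mathcal{S}(p)}\Delta^{\ell}(hp)\,\mathrm{d}\mu=0$ literally forces $(hp)(a)=0$ when $\ell=+1$, and the term is absent when $\ell=-1$) and in the continuous case with unbounded support (an $L^1(\mathrm{Leb})$ function whose weak derivative is in $L^1$ has limits $0$ at $\pm\infty$). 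But for a continuous target with compact support your argument does not close: the hypotheses only give $(hp)(a^{+})=(hp)(b^{-})$, not that this common value vanishes, and ``absolute continuity of $hp$ plus the convention that $p$ vanishes outside the support'' does not deliver it --- $hp$ is only absolutely continuous on the open support and may have a nonzero one-sided limit at a finite endpoint. Concretely, take $p$ uniform on $[0,1]$ and $h(x)=\cos(2\pi x)$: then $h$ is centred and lies in $L^1(p)\cap\mathcal{F}^{(1)}_0(p)$, yet $\mathcal{L}_p^{0}(\mathcal{T}_p^{0}h)(x)=\cos(2\pi x)-1\neq h(x)$. So the vanishing of the boundary value cannot be derived from the stated hypotheses; the identity $\mathcal{L}_p^{\ell}\mathcal{T}_p^{\ell}h=h$ holds only modulo the kernel of $\mathcal{T}_p^{\ell}$ (functions proportional to $1/p$), and one must either add the boundary condition $\lim_{x\downarrow a}(hp)(x)=0$ to the hypotheses (precisely the type of condition that reappears in the standardized classes $\mathcal{F}(\mathcal{A})$ later in the paper) or restrict to supports of infinite $\mu$-measure. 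Everything else in your write-up is sound.
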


Functions of the form $x \mapsto \mathcal{T}_p^{\ell}f(x)$ or
$x \mapsto \mathcal{L}_p^{\ell} h(x)$, for given special choices of
$f, h$, will play a crucial role in the sequel.  Of particular
importance is the choice of the constant function $f(x)=1$, on the one
hand, and the identity function $h(x) = \mathrm{Id}(x)$ on the other
hand. This leads to the next Definition (see \cite{ernst2019first}).
  \begin{defn}
    The score function of $p$ is
    $\rho_p^\ell(x)=\mathcal{T}_p^\ell 1(x) = \Delta^\ell p(x) /p(x)$;
    if $p$ has finite mean then its Stein kernel is
    $\tau_p^\ell(x)= - \mathcal{L}_p^\ell \mathrm{Id}(x)$. 
  \end{defn}
  \begin{exm}[Gaussian target] \label{ex:gau1} Consider a standard
    Gaussian target with density $\varphi(x)\propto e^{-x^2/2}$.  Then
    $\ell=0$. Simple computations show that $\rho_{\varphi}(x) = -x$ and
    $\tau_{\varphi}(x) = 1$.
\end{exm}

\begin{exm}[Exponential target] \label{ex:exp1}
   Consider a rate $\lambda$ exponential target with density
    $p_{\mathrm{exp}}(x) =  \lambda e^{-\lambda x} \mathbb{I}[x \ge
    0]$. Then $\ell = 0$. Simple computations show that
    $\rho_{\mathrm{exp}}(x) = - \lambda \mathbb{I}[x \ge 0]$ and
    $\tau_{\mathrm{exp}}(x) = x/\lambda \mathbb{I}[x \ge 0]$.
\end{exm}

\begin{exm}[Poisson target] \label{ex:poi1} Consider the discrete
  Poisson target density
  $p_{\mathrm{pois}}(x) = e^{-\lambda} \lambda^x / x! \mathbb{I}[x \ge
  0]$. Then, $\ell=-1$ or 1. Simple computations show that
  $\rho_{\mathrm{pois}}^+(x) =\lambda/(x+1)-1 $ and
  $\rho_{\mathrm{pois}}^-(x) =1- x/\lambda$,
  $\tau_{\mathrm{pois}}^+(x) = x$ and
  $\tau_{\mathrm{pois}}^-(x) = \lambda$ in all cases for $x\in\N$, and
  0 elsewhere.
\end{exm}

Stein operators satisfy the product rule
\begin{align*}
\mathcal{T}_p^\ell(f(x)g(x-\ell)) 
& = \big( \mathcal{T}_p^\ell f(x)\big) g(x) + f(x) \big(\Delta^{-\ell}g(x)\big).  
\end{align*}
for all $f, g$.  This observation leads to the next definition:

\begin{defn}[Standardizations of the operator]\label{def:standop} 
  Let $\mathrm{dom}(\mathcal{T}_p^{\ell})$ be the collection of
  functions such that $c(\cdot) p(\cdot)$ belongs to
  $\mathrm{dom}(\Delta^{\ell})$.  A \emph{standardization} of the
  canonical operator $\mathcal{T}_p^{\ell}$ is any linear operator of
  the form
  $\mathcal{A}g =
  \mathcal{T}_p^{\ell}\left(c(\cdot)g(\cdot-\ell)\right)$ for some
  $c \in \mathrm{dom}(\mathcal{T}_p^{\ell})$. That is,
    \begin{align} \label{eq:standop} \mathcal{A}g(x) =
      \mathcal{T}_p^\ell c(x) g(x) + c(x) \Delta^{-\ell}g(x). 
    \end{align}
    Given some function $c$, the corresponding \emph{standardized
      Stein class} is the collection $\mathcal{F}(\mathcal{A})$ of
    test functions $g$ such that
    $c(\cdot)g(\cdot-\ell)\in \mathcal{F}_\ell^{(1)}(p)$ and
    $c(\cdot) \Delta^{-\ell}g(\cdot) \in L^1(p)$.
\end{defn}

  By the definitions, it is evident that $\mathbb{E}[\mathcal{A}g(X)]
  = 0$ for all $g \in \mathcal{F}(\mathcal{A})$. Moreover, we have
  \begin{equation}
    \label{eq:stid3}
    \mathbb{E}[\mathcal{A}g(X)]= \mathbb{E}[c(X) \Delta^{-\ell}g(X)] +
    \mathbb{E}[\mathcal{T}_p^\ell c(X) g(X)] =0
  \end{equation}
  for all such $g$. Equation \eqref{eq:stid3} is a \emph{Stein identity};
  such identities have many applications, see
  \cite{ernst2019first,ernst2019infinite}.  Identities \eqref{eq:stid1}
  and \eqref{eq:stid2} can be seen to be of the form \eqref{eq:stid3}; hence
  these are in particular the starting point of Stein's method.

\begin{rmk}
  Another way of writing \eqref{eq:standop} is to insert
  $c= \mathcal{L}_p^\ell \eta$ in \eqref{eq:standop}, for $\eta$ well
  chosen, leading to the alternative definition
  \begin{equation}
    \label{eq:standop2}
    \mathcal{A}g (x)=
      \mathcal{T}_p^{\ell}\left(\mathcal{L}_p^\ell\eta(\cdot)g(\cdot-\ell)\right)
      (x) = \big(\eta(x) - \mathbb{E}[\eta(X)]\big) g(x) +
      \mathcal{L}^{\ell}_p\eta(x) \big(\Delta^{-\ell}g(x)\big)
\end{equation}
which acts on the Stein class $\mathcal{F}(\mathcal{A}_p^{\ell,\eta})$
of functions $g$ such that
$\mathcal{L}_p^\ell\eta(\cdot)g(\cdot-\ell) \in
\mathcal{F}_{\ell}^{(1)}(p)$. Although such operators generally have
very good properties, they do not make for a very good starting point
as we will want to consider coefficients with less regularity than
$ \mathcal{L}^{\ell}_p\eta$.
\end{rmk}

\begin{rmk}
  The most common examples of functions $c$ are $c(x) = 1$ and
  $c(x)= \tau_p^{\ell}(x)$; many other choices are of course
  possible. 
\end{rmk}

\begin{exm}[Gaussian target]\label{ex:gau2}
  Consider a Gaussian target as in Example \ref{ex:gau1}. Taking
  $c(x)=1$ in \eqref{eq:standop} (or $\eta(x) = -x$ in
  \eqref{eq:standop2}) leads to the classical operator
  $\mathcal{A}g(x) = g'(x) - x g(x)$ acting on
  $\mathcal{F}(\mathcal{A})$ the collection of test functions such
  that
  $\int_{-\infty}^{\infty}|(g(x) \varphi(x))' | \mathrm{d}x
  <\infty$ and
  $\lim_{x\to \infty} g(x) \varphi(x) =\lim_{x\to -\infty} g(x)
  \varphi(x)$. This is satisfied by all differentiable functions such
  that $g'\in L^1(\varphi)$, which is the classical class of test
  functions in this case, see e.g.\ \cite[Lemma
  3.1.2]{nourdin2012normal}.  Other choices of functions $c$ are
  possible, leading to other operators for the standard Gaussian.
  \end{exm}

  \begin{exm}[Exponential target]\label{ex:expon2}
   Consider an exponential target as in Example  \ref{ex:exp1}. 
    \begin{itemize}
    \item Taking $c(x)=1$ in \eqref{eq:standop} leads to the
      operator
      $\mathcal{A}_1g(x) = (g'(x) - \lambda g(x)) \mathbb{I}[x \ge
      0]$, acting on $\mathcal{F}(\mathcal{A}_1)$ the collection of
      test functions such that
      $\int_0^{\infty} |(\lambda g(x) e^{- \lambda x})'| \mathrm{d}x < \infty$ and
      $\lim_{x \to \infty}\lambda g(x) e^{-\lambda x} = g(0)$. In particular all
      functions $g$ such that $g(0) = 0$ and
      $g' \in L^1(p_{\mathrm{exp}})$ are in this class.
    \item Taking $\eta(x) = -x$ in \eqref{eq:standop2} (or $c(x) = x/\lambda$
      in \eqref{eq:standop}) leads to the operator
      $\mathcal{A}_2g(x) = (x/\lambda g'(x) -(x-1/\lambda)g(x)) \mathbb{I}[x \ge
        0]$ acting on $\mathcal{F}(\mathcal{A}_2)$ the collection of
      test functions such that
      $\int_0^{\infty} |(\lambda xg(x) e^{- \lambda x})'| \mathrm{d}x < \infty$ and
      $\lim_{x \to \infty} xg(x) e^{- \lambda x} = 0$. In particular all
      functions $g$ such that $xg'(x)$ are in $L^1(p_{\mathrm{exp}})$.
    \end{itemize}
  \end{exm}

  \begin{exm}[Poisson target]\label{ex:poi2}
    Consider a Poisson target as in Example  \ref{ex:poi1}.
    \begin{itemize}
    \item Taking $c(x)=1$ in \eqref{eq:standop} leads to the
      operators
      $\mathcal{A}_1^+g(x) =\big((\lambda/(x+1)-1)g(x) +
      \Delta^-g(x)\big)  \mathbb{I}[x\ge0]$ and 
      $\mathcal{A}_1^-g(x) =\big((1-x/\lambda)g(x) +
      \Delta^+g(x)\big)\mathbb{I}[x\ge0]$ 
      acting respectively
      on $\mathcal{F}(\mathcal{A}_1^+)$ the collection of test functions
      such that
      $\sum_{x=0}^{\infty} |\Delta^+(g(x) p_{\mathrm{pois}}(x))| < \infty$ and
      $\lim_{x \to \infty} g(x) p_{\mathrm{pois}}(x) = g(0) e^{-\lambda}$ (in particular all functions $g$ such that $g(0) = 0$ and
      $\Delta^+g \in L^1(p_{\mathrm{pois}})$ are in this class) and 
      $\mathcal{F}(\mathcal{A}_1^-)$ the collection of test functions
      such that
      $\sum_{x=0}^{\infty} |\Delta^-(g(x) p_{\mathrm{pois}}(x))| < \infty$ and
      $\lim_{x \to \infty} g(x) p_{\mathrm{pois}}(x) = 0$ (in
      particular all functions $g$ such that $\Delta^-g \in
      L^1(p_{\mathrm{pois}})$ are in this class).
    \item Taking $\eta(x) = -x$ in \eqref{eq:standop2} leads to the
      operators
      $\mathcal{A}_2^+g(x) =\big((\lambda-x)g(x) + x\Delta^-g(x)\big)
      \mathbb{I}[x\ge0]$ and
      $\mathcal{A}_2^-g(x) =\big((\lambda-x)g(x) + \lambda
      \Delta^+g(x)\big)\mathbb{I}[x\ge0]$ acting respectively on
      $\mathcal{F}(\mathcal{A}_2^+)$ the collection of test functions
      such that
      $\sum_{x=0}^{\infty} |\Delta^+(x g(x) p_{\mathrm{pois}}(x))| <
      \infty$ and $\lim_{x \to \infty} x g(x) p_{\mathrm{pois}}(x) =0$
      and $\mathcal{F}(\mathcal{A}_2^-)$ the collection of test
      functions such that
      $\sum_{x=0}^{\infty} |\Delta^-(\lambda g(x)
      p_{\mathrm{pois}}(x))| < \infty$ and
      $\lim_{x \to \infty} \lambda g(x) p_{\mathrm{pois}}(x) = 0$.

          \end{itemize}
  \end{exm}

\begin{rmk}
  \label{remk:Gfunctions}
  If $c \in \mathcal{F}_{\ell}^{(1)}(p)$, then
  $\mathcal{\mathcal{F}}(\mathcal{A})$ \emph{always} contains the
  constant functions $g(x) = \alpha \in \R$. For instance in the
  exponential case, $\mathcal{F}(\mathcal{A}_2)$ contains constant
  functions, whereas $\mathcal{F}(\mathcal{A}_1)$ does not.
\end{rmk}

The final ingredient of the theory is a family of equations called
\emph{Stein equations}.

\begin{defn}[Stein equation]
  
  Let $c \in \mathrm{dom}(\mathcal{T}_p^{\ell})$ be such that
  $c(x)\neq 0$ for all $x \in \mathrm{int}(\mathcal{S}(p))$ the
  interior of the support {(in the discrete case we call
    $\{a+1,\ldots,b-1\}$ the interior)}.  The $c$-Stein equation for
  $p$ is
  \begin{equation}
    \label{eq:stek}
    \mathcal{T}_p^{\ell} c(x) g(x) + c(x) \Delta^{-\ell} g(x) = h(x) -
    \mathbb{E}\big[h(X)\big]  =: \bar{h}(x)
  \end{equation}
  considered at all $x \in \mathcal{S}(p)$.  
\end{defn}


In \cite[Lemma 2.11]{ernst2019first} we provide conditions under which,
for any $h \in L^1(p)$, there exists a solution
$g \in \mathcal{F}( \mathcal{A})$ to \eqref{eq:stek} and
\eqref{eq:stek1} whose derivative is well defined almost everywhere.
\begin{lem}[Stein solution]\label{lem:steinsolutttt}
The solution to \eqref{eq:stek}  is $g_h^{p, \ell, c}=:g$ defined by
\begin{align}
  \label{eq:solstek1}
 g (x) 
&  = \frac{\mathcal{L}_p^{\ell} h(x+ \ell)}
          {c(x+\ell)}.
\end{align}
with the convention that $g(x) = 0$ for all $x+\ell$
outside of $\mathcal{S}(p)$. This function admits a derivative 
defined almost everywhere as                   
\begin{align}
  \label{eq:derivsolstek1}
  \Delta^{-\ell} g(x) &  =  \frac{ \bar h(x) - \mathcal{T}_p^{\ell}
                        c(x)  g(x)}{c(x)} \\
  & =  \frac{ \bar h(x) c(x+\ell) - \mathcal{T}_p^{\ell}
                        c(x)  \mathcal{L}_p^{\ell} h(x+ \ell)}{c(x)c(x+\ell)}                   
      \label{eq:derivsolstek2}
\end{align}
at all $x \in \mathrm{int}(\mathcal{S}(p))$. Moreover, {in the
  discrete case, if $\mathcal{S}(p)=\N \cap [a,b]$, then
  $\Delta^{-\ell} g(a)=g(a+b_\ell)$ and
  $\Delta^{-\ell} g(b)=-g(b-a_\ell)$.}
\end{lem}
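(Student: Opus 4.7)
The plan is to reduce the Stein equation to an equation of the form $\mathcal{T}_p^\ell F = \bar h$ and then invert using the identity $\mathcal{T}_p^\ell \circ \mathcal{L}_p^\ell h = \bar h$ already provided. First I would invoke the product rule displayed just before Definition \ref{def:standop}, namely
$$\mathcal{T}_p^\ell\!\big(c(x)\,g(x-\ell)\big) = \big(\mathcal{T}_p^\ell c(x)\big)g(x) + c(x)\,\Delta^{-\ell}g(x),$$
so that equation \eqref{eq:stek} reads $\mathcal{T}_p^\ell\!\big(c(\cdot)g(\cdot-\ell)\big)(x) = \bar h(x)$ on $\mathcal{S}(p)$. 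Combined with the convention that $c(\cdot)g(\cdot-\ell)$ must lie in $\mathcal{F}_\ell^{(1)}(p)$ and in particular vanishes off the support, this forces $c(x)g(x-\ell) = \mathcal{L}_p^\ell h(x)$. Since $c(x+\ell)\neq 0$ on $\mathrm{int}(\mathcal{S}(p))$, I can shift indices and divide to obtain \eqref{eq:solstek1}, with the stated convention $g(x)=0$ whenever $x+\ell\notin\mathcal{S}(p)$ naturally inherited from the support of $\mathcal{L}_p^\ell h$.

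Next, for \eqref{eq:derivsolstek1} I would simply rearrange the Stein equation itself, isolating $\Delta^{-\ell}g(x)$, which is legitimate on $\mathrm{int}(\mathcal{S}(p))$ since $c(x)\neq 0$ there. Substituting the explicit form of $g$ from \eqref{eq:solstek1} into the resulting expression yields \eqref{eq:derivsolstek2}. For the boundary identities in the discrete case, I would unfold $\Delta^{-\ell}g$ at the endpoints of $[a,b]\cap\N$ using the vanishing convention: for $\ell=+1$, the value $g(a-1)$ is zero (as $(a-1)+1=a\in\mathcal{S}(p)$ is not what triggers vanishing, but rather $g$ is defined by $\mathcal{L}_p^+h$ at an argument which vanishes outside the support), giving $\Delta^{-1}g(a)=g(a)=g(a+b_\ell)$, and $g(b)=0$ since $b+1\notin\mathcal{S}(p)$, giving $\Delta^{-1}g(b)=-g(b-1)=-g(b-a_\ell)$; the case $\ell=-1$ is symmetric with $g(a)=0$ and $g(b+1)=0$.

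The one step that deserves care, and is the main conceptual obstacle rather than a computational one, is the inversion: $\mathcal{L}_p^\ell$ is only a \emph{pseudo}-inverse of $\mathcal{T}_p^\ell$, so from $\mathcal{T}_p^\ell F = \bar h$ one cannot immediately conclude $F = \mathcal{L}_p^\ell h$, since $F$ could differ by an element of $\ker\mathcal{T}_p^\ell$. The uniqueness is recovered by insisting that $F = c(\cdot)g(\cdot-\ell)$ sit in $\mathcal{F}_\ell^{(1)}(p)$, i.e.\ that the extension of $Fp$ by zero off $\mathcal{S}(p)$ integrate to zero; combined with the boundary convention, this pins down $F$ uniquely and so pins down $g$. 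The remainder of the argument is then purely algebraic manipulation.
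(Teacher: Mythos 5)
Your argument is correct and follows exactly the route the paper intends: the paper omits a written proof (deferring to \cite[Lemma 2.11]{ernst2019first}), but the machinery it sets up — the product rule $\mathcal{T}_p^{\ell}(c(\cdot)g(\cdot-\ell))=\mathcal{T}_p^{\ell}c\,g+c\,\Delta^{-\ell}g$ together with $\mathcal{T}_p^{\ell}\mathcal{L}_p^{\ell}h=\bar h$ — is precisely what you invoke, and your checks of the discrete boundary identities (including the observation that $g(a-b_\ell-a_\ell)$ vanishes because the defining integral is empty rather than by the off-support convention) are accurate. Your closing remark about the kernel of $\mathcal{T}_p^{\ell}$ correctly identifies the only delicate point, and the class membership $c(\cdot)g(\cdot-\ell)\in\mathcal{F}_{\ell}^{(1)}(p)$ is indeed what resolves it.
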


\begin{exm}[Gaussian target]\label{ex:gau3}
  Consider a Gaussian target as in Example \ref{ex:gau2}.  The
  operator leads to the  Stein
  equation
$  g'(x) - x g(x) = h(x) - \mathbb{E}h(X)$
  whose solution in  $\mathcal{F}(\mathcal{A})$ is given by
  \begin{equation}\label{eq:gausssoljenaimarre}
    g(x) = e^{x^2/2} \int_{-\infty}^x ( h(u) - \mathbb{E}h(X))
e^{-u^2/2}  \mathrm{d}u.
\end{equation}
Illustrations are provided for $h(x) = \mathbb{I}[x \le \xi]$
indicator of half lines in Lemma \ref{lem:halflinein} and Figure
\ref{fig:solgau}. 
  \end{exm}
  
  \begin{exm}[Exponential target]\label{ex:expon3}
    Consider an exponential target as in Example \ref{ex:expon2}. 
    The first operator $\mathcal{A}_1$ leads to the Stein equation
$   g_1'(x) -  \lambda g_1(x) = h(x) - \mathbb{E}[h(X)]$
on $[0, \infty)$ whose solution in $\mathcal{F}(\mathcal{A}_1)$ is given
by
\begin{equation}\label{eq:expsollll1}
  g_1(x) = \left( e^{\lambda x} \int_0^x (h(u) - \mathbb{E}h(X))
    e^{-\lambda u}
    \mathrm{d}u\right) \mathbb{I}[x \ge 0]. 
\end{equation}
Illustrations are provided for $h(x) = \mathbb{I}[x \le \xi]$
indicator of half lines in Lemma \ref{lem:halflinein} and Figure
\ref{fig:solexpgau}.

The second operator $\mathcal{A}_2$ leads to the Stein equation
$ x/\lambda g_2'(x) - (x-1/\lambda)g_2(x) = h(x) - \mathbb{E}[h(X)]$ (still restricted to
$[0, \infty)$) whose solution in $\mathcal{F}(\mathcal{A}_2)$ is given
by
\begin{equation}\label{eq:expsollll2}
  g_2(x) = \left( \frac{\lambda}{x} e^{ \lambda x } \int_0^x (h(u) -
    \mathbb{E}h(X))   e^{-\lambda u}
    \mathrm{d}u\right) \mathbb{I}[x \ge 0]. 
\end{equation}
Illustrations are provided for $h(x) = \mathbb{I}[x \le \xi]$
indicator of half lines in Lemma \ref{lem:halflinein} and Figure
\ref{fig:solexp2}. 
\end{exm}

  \begin{exm}[Poisson target]\label{ex:poi3}
    Consider a Poisson target as in Example \ref{ex:poi2}. 
The first operators $\mathcal{A}_1^+$ and $\mathcal{A}_1^-$ leads to the Stein equations $(\lambda/(x+1)-1)g(x)+\Delta^-g(x)=h(x)-\E[h(X)]$ and $(1-x/\lambda)g(x)+\Delta^+g(x)=h(x)-\E[h(X)]$ on positive integers whose solutions in  $\mathcal{F}(\mathcal{A}_1^+)$ and $\mathcal{F}(\mathcal{A}_1^-)$ are given by 
\begin{align*}
  g^+_1(x) 
  &= \left( \frac{1}{p_{\mathrm{pois}}(x+1)} \sum_{j=0}^x (h(j) - \mathbb{E}h(X)) p_{\mathrm{pois}}(j)) \right) \mathbb{I}[x \ge 0] ,\\  
  g^-_1(x) &=  \left( \frac{1}{p_{\mathrm{pois}}(x-1)} \sum_{j=0}^{x-1} (h(j) - \mathbb{E}h(X)) p_{\mathrm{pois}}(j)) \right) \mathbb{I}[x > 0] .
\end{align*}  
  Illustrations are provided for the point mass $h(x) = \mathbb{I}[x = \xi]$ in Lemma \ref{lem:pointmass} and Figure \ref{fig:solpoi}.  
  
  The other operators $\mathcal{A}_2^+$ and $\mathcal{A}_2^-$ leads to the Stein equations $(\lambda-x)g(x)+x\Delta^-g(x)=h(x)-\E[h(X)]$ and $(\lambda-x)g(x)+\lambda\Delta^+g(x)=h(x)-\E[h(X)]$ on positive integers whose solutions in  $\mathcal{F}(\mathcal{A}_2^+)$ and $\mathcal{F}(\mathcal{A}_2^-)$ are given by 
\begin{align}
  g^+_2(x) 
  &= \left( \frac{1}{(x+1) p_{\mathrm{pois}}(x+1)} \sum_{j=0}^x (h(j) - \mathbb{E}h(X)) p_{\mathrm{pois}}(j)) \right) \mathbb{I}[x \ge 0], \label{eq:jnm1}\\  
  g^-_2(x) &=  \left( \frac{1}{\lambda p_{\mathrm{pois}}(x-1)}
           \sum_{j=0}^{x-1} (h(j) - \mathbb{E}h(X))
           p_{\mathrm{pois}}(j)) \right) \mathbb{I}[x> 0]. \label{eq:jnm2}
\end{align}  
Illustrations are provided for the point mass $h(x) = \mathbb{I}[x = \xi]$ in Lemma \ref{lem:pointmass}.   
  \end{exm}

  In the sequel we shall focus on four different classes of test
  functions $\mathcal{H}$: (i) Lipschitz, (ii) indicators of Borel
  sets, (iii) indicators of half-lines, and (iv) Dirac deltas. As
  mentioned in the Introduction, these choices correspond in the
  Steinian approach to some of the more classical integral probability
  metrics, namely the Wasserstein distance (case (i)), the total
  variation distance (cases (ii) and (iv), and the Kolmogorov
  distance, case (iii). There is, however, in principle no need to
  restrict only to this choice of classes of test functions.

\subsection{The  solutions to Stein equations}
\label{sec:repr}

We study the solutions $g_h$ and their derivatives
$\Delta^{-\ell} g_h$ from Lemma \ref{lem:steinsolutttt}.

\begin{lem}[Lower half-line indicators, $\ell = 0$]\label{lem:halflinein}
  Let $\ell=0$ (i.e.\ $p$ is absolutely continuous w.r.t.\ the
  Lebesgue measure). If $h(x) = \mathbb{I}[x \le \xi]$, the Stein
  equation \eqref{eq:stek} for $p$ is
\begin{equation*} 
\mathcal{T}_p^0c(x) g(x) + c(x) g'(x) = \mathbb{I}[x\le \xi] - P(\xi).
\end{equation*}
The solutions \eqref{eq:solstek1} are 
\begin{equation}
  \label{eq:sol-indicatorHL}
  g (x) = \frac{1}{c(x)}\frac{P(\xi \wedge x)\bar P(\xi \lor x)}{ p(x)}
\end{equation}
still with the convention that the functions are set to 0 outside
support of $p$. The derivatives \eqref{eq:derivsolstek1} of these
solutions are
\begin{equation}
  \label{eq:derivsol-indicatorHL}
  g'(x) = \frac{\mathbb{I}[x \le \xi] - P(\xi)}{c(x)} - \frac{ \mathcal{T}_p^0c(x)}{c^2(x)}  \frac{P(\xi \wedge
    x)\bar P(\xi \lor x)}{ p(x)}. 
\end{equation}
\end{lem}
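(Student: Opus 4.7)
The plan is to apply Lemma \ref{lem:steinsolutttt} directly with $\ell=0$, $h(x) = \mathbb{I}[x\le \xi]$, and then simplify the resulting expression for $\mathcal{L}_p^0 h$ using both representations in \eqref{eq:caninv}. Since $\ell=0$, formula \eqref{eq:solstek1} reduces to $g(x) = \mathcal{L}_p^0 h(x)/c(x)$, so the whole task boils down to showing that
\begin{equation*}
\mathcal{L}_p^0 h(x) = \frac{P(\xi \wedge x)\bar P(\xi \vee x)}{p(x)}
\end{equation*}
for all $x \in \mathcal{S}(p)$. Note first that $\mathbb{E}[h(X)] = P(\xi)$.

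I would split the computation into the two cases $x\le \xi$ and $x>\xi$, picking in each case whichever of the two expressions in \eqref{eq:caninv} makes the integrand collapse to a constant. If $x\le \xi$, use the left representation: on $[a,x]$ we have $h(u)=1$ so $h(u) - P(\xi) = \bar P(\xi)$, yielding $\mathcal{L}_p^0h(x) = \bar P(\xi) P(x)/p(x)$; since $P(x) = P(x\wedge \xi)$ and $\bar P(\xi) = \bar P(x\vee \xi)$ this matches the claimed formula. If $x>\xi$, use the right representation: on $[x,b]$ we have $h(u) = 0$ so $P(\xi) - h(u) = P(\xi)$, yielding $\mathcal{L}_p^0h(x) = P(\xi)\bar P(x)/p(x)$, which again agrees because now $P(\xi)=P(x\wedge\xi)$ and $\bar P(x)=\bar P(x\vee\xi)$. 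Dividing by $c(x)$ gives \eqref{eq:sol-indicatorHL}.

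For the derivative, I would simply substitute into \eqref{eq:derivsolstek1}, which for $\ell = 0$ reads $g'(x) = (\bar h(x) - \mathcal{T}_p^0c(x) g(x))/c(x)$; inserting $\bar h(x) = \mathbb{I}[x\le \xi] - P(\xi)$ and the expression just obtained for $g(x)$ immediately produces \eqref{eq:derivsol-indicatorHL}.

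The only subtle point is that the two representations of $\mathcal{L}_p^0 h$ must agree globally, but this is automatic from the defining identity $\int_a^b (h(u) - \mathbb{E}h(X))p(u)\, du = 0$ guaranteed by Lemma \ref{lem:steinsolutttt} applied to $h \in L^1(p)$, so there is no real obstacle; the argument is essentially a case-by-case integration.
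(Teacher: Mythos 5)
Your proof is correct and is exactly the direct computation the paper intends (the proof is omitted there precisely because, as the authors note, it is ``facilitated by the explicit nature of the test functions''): specializing \eqref{eq:solstek1} and \eqref{eq:derivsolstek1} to $\ell=0$ and evaluating $\mathcal{L}_p^0h$ via whichever side of \eqref{eq:caninv} trivializes the integrand in each of the two cases $x\le\xi$ and $x>\xi$. The only cosmetic remark is that the agreement of the two representations in \eqref{eq:caninv} follows directly from $\int_a^b(h(u)-\mathbb{E}h(X))p(u)\,\mathrm{d}u=0$, i.e.\ from the definition of $\mathbb{E}[h(X)]$, and needs no appeal to Lemma \ref{lem:steinsolutttt}.
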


\begin{figure}
  \centering
    \includegraphics[width=0.4\textwidth]{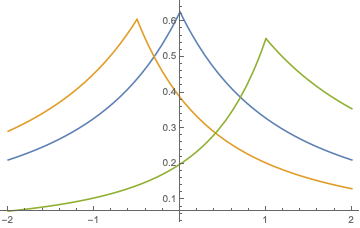}
    \includegraphics[width=0.4\textwidth]{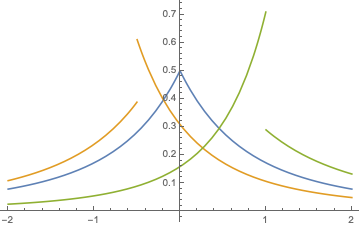}
  \caption{\small Solution \eqref{eq:sol-indicatorHL} (left plot) and
    absolute value of its derivative \eqref{eq:derivsol-indicatorHL}
(right plot) for Gaussian target with $c(x) = 1$ and, in both
plots,  $\xi=-0.5$
(orange curves), $\xi=0$ (blue curves) and $\xi = 1$ (green curves)}
  \label{fig:solgau}
\end{figure}

\begin{figure}
  \centering 
  \includegraphics[width=0.4\textwidth]{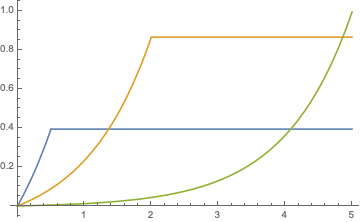}
    \includegraphics[width=0.4\textwidth]{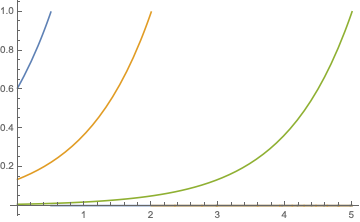}
  \caption{\small  Solution \eqref{eq:sol-indicatorHL} (left plot) and
    absolute value of its derivative \eqref{eq:derivsol-indicatorHL} 
    (right plot) for exponential target with $c(x) = 1$ and, in both
plots,  $\xi=0.5$
(blue curves), $\xi=2$ (orange curves) and $\xi = 5$ (green curves) }
  \label{fig:solexpgau}
\end{figure}

\begin{figure}
  \centering
  \includegraphics[width=0.4\textwidth]{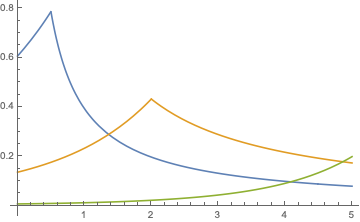}
    \includegraphics[width=0.4\textwidth]{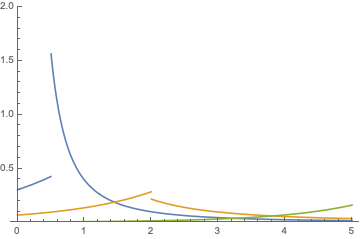}
    \caption{\small Solution \eqref{eq:sol-indicatorHL} (left plot)
      and and absolute value of its derivative
      \eqref{eq:derivsol-indicatorHL} (right plot) for exponential
      target with $c(x) = x$ and, in both plots, $\xi=.5$ (blue
      curves), $\xi=2$ (orange curves) and $\xi = 5$ (green curves)}
  \label{fig:solexp2}
\end{figure}

\begin{lem}[Point mass, $\ell = \pm 1$] \label{lem:pointmass} Let
  $\ell=\pm1$ (i.e.\ $p$ is absolutely continuous w.r.t.\ the counting
  measure).  Let $P$ be the cdf of $p$ and
  $h(x)=\mathbb{I}[x=\xi]$. The Stein equation \eqref{eq:stek} for $p$
is 
\begin{equation}\label{eq:eq-indicator1}
 \mathcal{T}_p^\ell c(x)  g(x) + c(x) \Delta^{-\ell}g(x) = \mathbb{I}[x=\xi] - p(\xi)
\end{equation}
and the solutions \eqref{eq:solstek1} are given by
\begin{equation}  \label{eq:gpointmass}
g^\ell_\xi(x) 
= \frac{p(\xi)}{c(x+\ell)p(x+\ell)}\left(\mathbb{I}[x\geq \xi+b_\ell] - P(x-b_\ell)\right)
\end{equation}
If, moreover, $ c =  \tau^{\ell}_p$ then the derivatives
\eqref{eq:derivsolstek1} satisfy
\begin{equation}\label{eq:dersol-indicator}
  \Delta^{-} g^+_\xi(x) = \Delta^{+} g^-_\xi(x) =
  \frac{\mathbb{I}[x=\xi]-p(\xi)}{\tau_p^+(x)} + \frac{p(\xi)(\mathbb{I}[x\geq \xi]-P(x))}{p(x)}\left(\frac{1}{\tau_p^-(x)}-\frac{1}{\tau_p^+(x)}\right)
\end{equation}
\end{lem}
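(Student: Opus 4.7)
The plan is to split the argument into two parts: first establish the closed form \eqref{eq:gpointmass} directly from Lemma \ref{lem:steinsolutttt}, then derive \eqref{eq:dersol-indicator} using the specific structure of $c = \tau_p^{\ell}$.

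For the formula for $g^{\ell}_{\xi}$, the plan is to apply \eqref{eq:solstek1}, so that $g(x) = \mathcal{L}_p^{\ell} h(x+\ell)/c(x+\ell)$, and then to compute $\mathcal{L}_p^{\ell}h(x+\ell)$ explicitly. Since $h(u) = \mathbb{I}[u=\xi]$ has $\mathbb{E}[h(X)] = p(\xi)$, the definition \eqref{eq:caninv} collapses to a finite sum. Using $a_{\ell} + b_{\ell} = 1$ (so $x+\ell - a_{\ell} = x - b_{\ell}$) and $\sum_{u=a}^{x-b_{\ell}}\mathbb{I}[u=\xi] = \mathbb{I}[\xi \le x-b_{\ell}] = \mathbb{I}[x \ge \xi + b_{\ell}]$ on the support, one reads off $\mathcal{L}_p^{\ell}h(x+\ell) = p(\xi)(\mathbb{I}[x\ge \xi+b_{\ell}] - P(x-b_{\ell}))/p(x+\ell)$, which after dividing by $c(x+\ell)$ gives exactly \eqref{eq:gpointmass}. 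Both choices $\ell=\pm 1$ are handled at once by keeping track of $a_{\ell},b_{\ell}$.

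For the derivative formula, I would use \eqref{eq:derivsolstek1}, which reads $\Delta^{-\ell}g^{\ell}_{\xi}(x) = \bar h(x)/\tau_p^{\ell}(x) - (\mathcal{T}_p^{\ell}\tau_p^{\ell}(x))\, g^{\ell}_{\xi}(x)/\tau_p^{\ell}(x)$. The key observation is that since $\tau_p^{\ell} = -\mathcal{L}_p^{\ell}\mathrm{Id}$, the inversion property $\mathcal{T}_p^{\ell}\mathcal{L}_p^{\ell} = \mathrm{Id} - \mathbb{E}[X]$ from the Theorem in Section \ref{sec:formalism-1} yields $\mathcal{T}_p^{\ell}\tau_p^{\ell}(x) = \mathbb{E}[X] - x$ on the support. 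Substituting this together with \eqref{eq:gpointmass} gives a preliminary expression in which $\mathbb{E}[X]-x$, $c(x+\ell)p(x+\ell)$, and an indicator/CDF combination appear.

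The main technical step is to reconcile the two cases $\ell = \pm 1$ into the single common expression claimed in \eqref{eq:dersol-indicator}. For this I would rely on two identities: the telescoping identity $\tau_p^{+}(x+1)p(x+1) = \tau_p^{-}(x)p(x)$ (both equal $\sum_{u=a}^{x}(\mathbb{E}[X]-u)p(u)$ by definition of $-\mathcal{L}_p^{\ell}\mathrm{Id}$), and the one-step relation $\tau_p^{-}(x) - \tau_p^{+}(x) = \mathbb{E}[X] - x$. The first reduces both denominators to the symmetric quantity $\tau_p^{+}(x)\tau_p^{-}(x)p(x)$, and the second turns the factor $\mathbb{E}[X]-x$ into $\tau_p^{-}(x)-\tau_p^{+}(x)$, producing the characteristic combination $1/\tau_p^{-}(x) - 1/\tau_p^{+}(x)$. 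The remaining obstacle, and the only nontrivial algebra, is in the $\ell=-1$ case: the expression naturally features $\mathbb{I}[x\ge \xi+1] - P(x-1)$ rather than $\mathbb{I}[x\ge \xi] - P(x)$ and $\bar h(x)/\tau_p^{-}(x)$ rather than $\bar h(x)/\tau_p^{+}(x)$. I would absorb the difference via the pointwise identity $\mathbb{I}[x \ge \xi+1] - P(x-1) = (\mathbb{I}[x\ge \xi] - P(x)) - \bar h(x) + (p(x) - p(\xi))$, together with $\mathbb{I}[x=\xi](1 - p(\xi)/p(x)) = 0$, so that all mismatched terms cancel. This simultaneously proves that $\Delta^{-}g^{+}_{\xi}(x) = \Delta^{+}g^{-}_{\xi}(x)$ and yields the final stated form. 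Boundary values follow from the last clause of Lemma \ref{lem:steinsolutttt}.
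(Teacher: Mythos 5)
Your proposal is correct: the closed form \eqref{eq:gpointmass} follows exactly as you describe from \eqref{eq:solstek1} with $x+\ell-a_\ell=x-b_\ell$, and the derivative computation via $\mathcal{T}_p^{\ell}\tau_p^{\ell}(x)=\mathbb{E}[X]-x$, the relation $\tau_p^{\ell}(x+\ell)p(x+\ell)=\tau_p^{-\ell}(x)p(x)$ and $\tau_p^-(x)-\tau_p^+(x)=\mathbb{E}[X]-x$ is precisely the intended (omitted) argument, these being the same identities the paper invokes in its proof of Lemma \ref{lem:pointmass2}; your cancellation $\mathbb{I}[x=\xi](1-p(\xi)/p(x))=0$ checks out. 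As a small shortcut, the equality $\Delta^{-}g^+_\xi=\Delta^{+}g^-_\xi$ also follows at once from $g^+_\xi(x)=g^-_\xi(x+1)$, which is immediate from \eqref{eq:gpointmass} and the kernel relation, so only one of the two cases needs the algebraic reconciliation.
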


\begin{figure}
  \centering
  \includegraphics[width=0.4\textwidth]{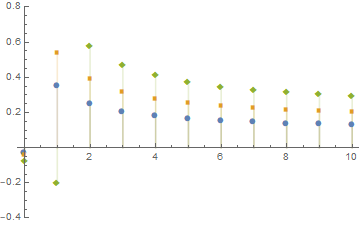}
  \includegraphics[width=0.4\textwidth]{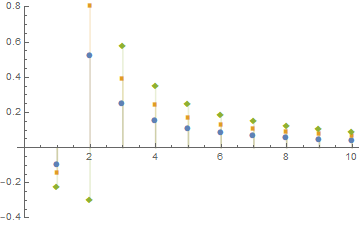}
    
  \includegraphics[width=0.4\textwidth]{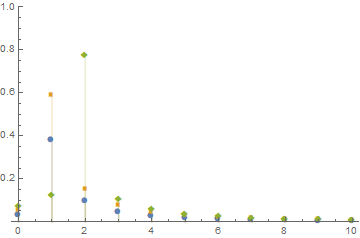}
  \includegraphics[width=0.4\textwidth]{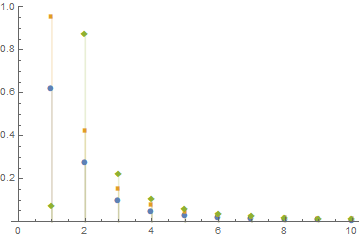}
  \caption{\small Solutions \eqref{eq:gpointmass} (upper panels) and and
    absolute value of their derivatives \eqref{eq:dersol-indicator}
    (lower panels) for Poisson target of parameter 3 with $c(x)=1$,  $\ell = 1$ (left plot) and $\ell =-1$ (right plot) and, in all plots, $\xi=0.5$
(blue curves), $\xi=1$ (orange curves) and $\xi = 2$ (green curves)}
  \label{fig:solpoi}
\end{figure}

\begin{rmk}\label{rmk:borel}
  The result of point mass can easily be extended to any Borel
  set. Following the proof of \cite[Lemma 1.1.1]{barbour1992poisson},
  for any $A\subset \mathcal{S}(p)$, the Stein equation
  \eqref{eq:stek} for $p$ is
\begin{equation*}
\mathcal{T}_p^{\ell}c(x) g(x) + c(x) \Delta^{-\ell}g(x) = \mathbb{I}_A(x) - \E[\mathbb{I}_A(X)]
\end{equation*}
and the solutions \eqref{eq:solstek1} are now given by
\begin{align*}  
g^\ell_A(x) &=
\frac{1}{c(x+\ell)p(x+\ell)}\left(
\sum_{\xi\in A} p(\xi)\mathbb{I} [x\geq \xi+ b_\ell] - P(x-b_\ell)\E[\mathbb{I}_A(X)]\right) 
 = \sum_{\xi\in A} g_\xi (x)
\end{align*}
if $g_\xi$ is the solution of Stein equation \eqref{eq:eq-indicator1} for the point mass function $h_\xi(x)=\mathbb{I}[x=\xi]$. 
\end{rmk}

Lemmas~\ref{lem:halflinein} and \ref{lem:pointmass} are facilitated by
the explicit nature of the test functions.  In order to be able to
deal with unspecified functions $h$, we first recall a result proved
in \cite{ernst2019first}, wherein it is shown that the inverse
operator \eqref{eq:caninv} admits several probabilistic
representations. Throughout the section, all results are stated with
the implicit assumption that all functions exist and that the various
expectations are defined.

\begin{lem}
  We introduce the following notations: generalized indicator
  functions
\begin{equation*}
  \chi^{\ell}(x, y) = \mathbb{I}[x + a_{\ell}\le y ] \mbox{ and }\chi^\ell(u,x,v)=\chi^\ell(u,x)\chi^{-\ell}(x,v)=\mathbb{I}[u+a_\ell
\leq x\leq v-b_\ell],
\end{equation*}
 the symmetric positive kernel
\begin{equation*}
  \tilde{K}_p^{\ell}(x, y) = \frac{P(x\wedge y-a_{\ell})
    \bar{P}(x\lor y-a_{\ell})}{p(x)p(y)}.
\end{equation*}
Then, for all functions $h \in L^1(p)$, we have
\begin{align}
-\mathcal{L}_p^\ell h(x) 
&  = \frac{-1}{p(x)}\mathbb{E} \left[
   \chi^{\ell}( X, x)\big(h(X)- \mathbb{E}[h(X)]\big)\right] \nonumber \\ 
    &    = \frac{-1}{p(x)}\mathbb{E} \left[
   \big(\chi^{\ell}( X, x) - \mathbb{E}[\chi^{\ell}( X, x)]\big)
      \big(h(X)- \mathbb{E}[h(X)]\big)\right] \nonumber
      \\
  &  = \frac{1}{p(x)} \mathbb{E}[(h(X_2) - h(X_1)) \chi^{\ell}(X_1, x,
    X_2)] \nonumber\\
  & = \mathbb{E} \left[\tilde{K}_p^{\ell}(x,
  X) \Delta^{-\ell}h(X)  \right]\label{eq:rep_caninv}
\end{align}
with $X_1, X_2$ independent copies of $X$. 
\end{lem}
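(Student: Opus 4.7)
The plan is to prove the four representations in order, each as a short manipulation of the previous one. Identity 1 is a direct rewriting of \eqref{eq:caninv}: the generalised indicator $\chi^{\ell}(X,x)$ equals $\mathbb{I}[X \le x - a_{\ell}]$, so on the support of $X$ the integral from $a$ to $x - a_{\ell}$ in \eqref{eq:caninv} is precisely $\mathbb{E}[\chi^{\ell}(X,x)(h(X) - \mathbb{E}[h(X)])]$ divided by $p(x)$. Identity 2 follows by inserting $\mathbb{E}[\chi^{\ell}(X,x)]$ as a centring constant inside the expectation; this is cost-free because $h(X) - \mathbb{E}[h(X)]$ is centred, so the resulting quantity is $-\mathrm{Cov}(\chi^{\ell}(X,x), h(X))/p(x)$.

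For identity 3, I would introduce independent copies $X_1, X_2$ of $X$ and apply the standard covariance identity $\mathrm{Cov}(f(X), g(X)) = \tfrac{1}{2}\mathbb{E}[(f(X_1) - f(X_2))(g(X_1) - g(X_2))]$ to the expression from identity 2, with $f = \chi^{\ell}(\cdot,x)$ and $g = h$. The key algebraic fact driving the emergence of the ``middle'' indicator $\chi^{\ell}(X_1, x, X_2)$ is
\begin{equation*}
\chi^{\ell}(X_1,x) - \chi^{\ell}(X_2, x) = \chi^{\ell}(X_1, x, X_2) - \chi^{\ell}(X_2, x, X_1),
\end{equation*}
which I would verify by a short case analysis, noting that the shifts $a_{\ell}, b_{\ell}$ are calibrated so that the events $\{X_2 + a_{\ell} > x\}$ and $\{X_2 - b_{\ell} \ge x\}$ agree both in the continuous and in the two discrete cases. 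Exchangeability of $(X_1, X_2)$ then makes the two terms in the symmetrisation equal, which absorbs the factor $\tfrac{1}{2}$ and yields identity 3.

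For identity 4, the idea is a telescoping representation of $h(X_2) - h(X_1)$ followed by Fubini. On the event $\{\chi^{\ell}(X_1, x, X_2) = 1\}$ one has $X_1 + a_{\ell} \le X_2 - b_{\ell}$, and I would observe that
\begin{equation*}
h(X_2) - h(X_1) = \int \chi^{\ell}(X_1, v, X_2) \, \Delta^{-\ell}h(v)\, \mu(\mathrm{d}v),
\end{equation*}
which is the fundamental theorem of calculus for $\ell = 0$ and a telescoping sum for $\ell = \pm 1$. Substituting into identity 3, exchanging the order of integration, and computing
\begin{equation*}
\mathbb{E}\bigl[\mathbb{I}[X_1 + a_{\ell} \le x \wedge v]\, \mathbb{I}[X_2 - b_{\ell} \ge x \vee v]\bigr] = P((x \wedge v) - a_{\ell})\, \bar P((x \vee v) - a_{\ell})
\end{equation*}
uniformly in $\ell \in \{-1, 0, 1\}$ (again by direct case-by-case evaluation) produces, after recognising $\tilde K_p^{\ell}(x,v)\, p(x)\, p(v)$ on the right-hand side and dividing by $p(x)$, the fourth identity.

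The only real obstacle is the careful bookkeeping of the shifts $a_{\ell}$ and $b_{\ell}$: the algebraic identities highlighted above, together with the evaluation of $\bar P$ at $(x \vee v) - a_{\ell}$, must be checked separately for the continuous and for the two discrete cases. Once this is done, everything else is routine Fubini under the standing integrability assumptions ($h \in L^1(p)$ for identities 1--3 and $\Delta^{-\ell}h \in L^1(p)$ for identity 4).
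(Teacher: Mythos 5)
Your proposal is correct, and it follows essentially the same route as the source: identity 1 is a rewriting of \eqref{eq:caninv} via the generalized indicator, identities 2--3 come from centring and symmetrising over independent copies, and identity 4 uses exactly the ``probabilistic integration'' device $h(x_2)-h(x_1)=\int \chi^{\ell}(x_1,v,x_2)\,\Delta^{-\ell}h(v)\,\mu(\mathrm{d}v)$ that the paper records as \eqref{eq:probaint} in its proof of the companion Lemma~\ref{lem:Rp} (the lemma itself is recalled from \cite{ernst2019first} rather than reproved here). The only caveat worth recording is the one you already flag: the pointwise identity $\chi^{\ell}(X_1,x)-\chi^{\ell}(X_2,x)=\chi^{\ell}(X_1,x,X_2)-\chi^{\ell}(X_2,x,X_1)$ fails on $\{X_i=x\}$ when $\ell=0$, but this is a $\mathbb{P}$-null set in the continuous case, so the argument goes through almost surely.
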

The next useful lemma is easily proved along the same lines as the
previous one.
\begin{lem}\label{lem:Rp}
Define 
  \begin{equation*}
R_p^{\ell}(x, y) = \chi^{-\ell}(y, x) \frac{P(y-a_{\ell})}{p(y)}  -
\chi^{\ell}(x, y) \frac{\bar P(y-a_{\ell})}{p(y)} = \frac{1}{p(y)}
\big(P(y - a_{\ell}) - \chi^{\ell}(x, y)\big).  
\end{equation*}
Then 
  \begin{equation}
    \label{eq:rep_h}
    \bar{h}(x) =  \mathbb{E}[R_p^{\ell}(x, X) \Delta^{-\ell}h(X)].
  \end{equation}
\end{lem}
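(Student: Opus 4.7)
The plan is to verify the identity directly. The factor $p(y)$ in the denominator of $R_p^\ell(x,y)$ cancels the $p(y)$ weight in the expectation, reducing the claim to
\begin{align*}
\mathbb{E}\bigl[R_p^\ell(x,X)\,\Delta^{-\ell}h(X)\bigr] \;=\; \int_{\mathcal{S}(p)}\bigl(P(y-a_\ell)-\chi^\ell(x,y)\bigr)\Delta^{-\ell}h(y)\,\mu(\mathrm{d}y) \;=:\; I_1-I_2,
\end{align*}
and it then suffices to show $I_1-I_2=\bar h(x)$. I would then evaluate $I_1$ and $I_2$ separately.

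For $I_1$, one integration by parts in the continuous case (resp.\ Abel summation in the discrete case), together with the elementary identity $\Delta^\ell P(y-a_\ell)=p(y)$ (which one checks separately for each $\ell\in\{-1,0,+1\}$), gives $I_1 = \bigl[P(y-a_\ell)h(y)\bigr]_{\partial\mathcal{S}(p)}-\mathbb{E}[h(X)]$. For $I_2$, the indicator $\chi^\ell(x,y)=\mathbb{I}[y\ge x+a_\ell]$ restricts the integrand to an upper tail, so a telescoping argument (the fundamental theorem of calculus in the continuous case, or a telescoping sum in the discrete case) yields $I_2 = h(\mathrm{upper\ end\ of\ }\mathcal{S}(p))-h(x)$. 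The two upper-boundary contributions in $I_1$ and $I_2$ match (both are $h$ evaluated at the upper end of the support, multiplied by the value $P=1$ there), and the lower boundary term in $I_1$ vanishes because $P=0$ at the lower end; consequently $I_1-I_2 = -\mathbb{E}[h(X)]+h(x) = \bar h(x)$, as required.

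The only real obstacle is the careful accounting of the index shifts $a_\ell$ versus $b_\ell$ and of the behaviour at the extremes of $\mathcal{S}(p)$, which distinguishes the three cases $\ell\in\{-1,0,+1\}$; these calculations, however, are direct analogues of those underlying the representation \eqref{eq:rep_caninv} of $-\mathcal{L}_p^\ell h$ in the previous lemma, and boundary contributions vanish under the standing assumption that all quantities in sight are finite. An alternative route, which entirely bypasses this bookkeeping, is to deduce \eqref{eq:rep_h} from the previous lemma by applying $\mathcal{T}_p^\ell$ (in $x$) to both sides of $-\mathcal{L}_p^\ell h(x)=\mathbb{E}[\tilde K_p^\ell(x,X)\Delta^{-\ell}h(X)]$, interchanging the operator with the expectation, using $\mathcal{T}_p^\ell\mathcal{L}_p^\ell h=\bar h$, and verifying by a short direct computation that $\mathcal{T}_p^\ell\tilde K_p^\ell(\cdot,y)(x)=-R_p^\ell(x,y)$.
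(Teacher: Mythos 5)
Your proof is correct but takes a genuinely different route from the paper's. The paper never splits the integrand: it writes $\bar h(x)=\mathbb{E}\big[(h(x)-h(X))(\chi^{\ell}(X,x)+\chi^{-\ell}(x,X))\big]$, represents each increment $h(x)-h(X)$ probabilistically via the kernel $\Phi_p^{\ell}(u,\cdot,v)$ applied to an independent copy $X_2$ (the ``probabilistic integration'' formula \eqref{eq:probaint}), and then conditions on $X_2$ and computes $\mathbb{E}\big[\Phi_p^\ell(X,y,x)-\Phi_p^\ell(x,y,X)\big]$ explicitly to recognize $R_p^{\ell}(x,y)$ -- i.e.\ it is a Fubini-type argument that reuses the machinery of the preceding lemma. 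You instead verify the identity by integration by parts/Abel summation on $I_1$ and telescoping on $I_2$, which is more elementary and self-contained; your second alternative (applying $\mathcal{T}_p^{\ell}$ to \eqref{eq:rep_caninv} and checking $\mathcal{T}_p^{\ell}\tilde K_p^{\ell}(\cdot,y)(x)=-R_p^{\ell}(x,y)$) is also sound and arguably the slickest of the three. One caution about your main route: for general $h\in L^1(p)$ on an unbounded support, $I_1$ and $I_2$ need not converge separately (e.g.\ a Lipschitz $h$ on $\R$ has no limit at $+\infty$), so the split should really be avoided in favour of keeping the combined coefficient $P(y-a_\ell)-\chi^{\ell}(x,y)$, which vanishes at both ends of the support; this is a presentational repair rather than a gap, and falls under the section's blanket assumption that all expectations in sight are defined. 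Also note that in the discrete case the upper boundary term of $I_1$ is $P(b-a_\ell)h(b)\neq h(b)$, so the cancellation against $I_2$ requires exactly the index bookkeeping you flag, but it does go through.
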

\begin{rmk}\label{rmk:expK}
  It is easy to show that
  $ \mathbb{E} \big[ \tilde{K}_p^{\ell}(x, X) \big] =
  \tau_p^{\ell}(x) $ (the Stein kernel of $p$), and
  $\mathbb{E}\big[R_p^{\ell}(x, X)\big] = x- \mathbb{E}[X].$
\end{rmk}

With these notations in hand, the following result holds.
\begin{lem}[Representation formulae]\label{prop:deltagrep}
 
  The solutions \eqref{eq:solstek1}  can be
  written:
\begin{align}
 g (x)   & =
          -\frac{\mathbb{E}\big[(h(X_2)-h(X_1))  \chi^{\ell}(X_1, x+\ell,
            X_2)\big]}{p(x+\ell) c(x+\ell)} \label{eq:sol1}\\ 
 & =
-   \frac{\mathbb{E}\left[ \tilde{K}_p^{\ell}(x+\ell,X)
            \Delta^{-\ell}h(X) \right]}{c(x+\ell)} \label{eq:sol2}
\end{align}
The derivatives \eqref{eq:derivsolstek1} can be written:
\begin{align}
  \label{eq:derivivsolrep1}
  \Delta^{-\ell}g(x) &=  \frac{\bar{h}(x) }{c(x)}+
                       \frac{\mathcal{T}_p^{\ell}c(x)}{c(x)} \frac{
                       \mathbb{E}[(h(X_2)-h(X_1))  \chi^{\ell}(X_1, x+\ell, 
                       X_2)]}{c(x+\ell)p(x+\ell)}  \\
                     &   \label{eq:derivivsolrep2}
                       = \frac{\mathbb{E}\Big[ \big(R_p^{\ell}(x, X) c(x+\ell) +
                       \mathcal{T}_p^{\ell}c(x) \tilde{K}_p^{\ell}(x+\ell,X)
                       \big) \Delta^{-\ell}h(X) \Big]}{c(x) c(x+\ell)}.
\end{align}
If, moreover, $c \in \mathcal{F}_{\ell}^{(1)}(p)$ then, setting
$\bar\eta(x) = \mathcal{T}_p^{\ell} c(x)$, the derivatives
\eqref{eq:derivsolstek1} can further be simplified as:
\begin{align}  \label{eq:dersol1}
  \Delta^{-\ell}g(x)
& = \frac{\mathbb{E} \left[ \left(  \bar \eta(x) \big(h(X_2)-h(X_1)\big) -
                       \bar{h}(x) \big(\eta(X_2) - \eta(X_1)\big)
                       \right)\chi^{\ell}(X_1, x+\ell,
                       X_2)\right]}{p(x+\ell) \mathcal{L}_p^{\ell} \eta(x)\mathcal{L}_p^{\ell} \eta(x+\ell)} \\
  &=\frac{1}{p(x+\ell)\mathcal{L}_p^{\ell} \eta(x)\mathcal{L}_p^{\ell} \eta(x+\ell)}  \nonumber \\
&\qquad \times \Bigg(\E\left[\Delta^{-\ell} h(X) \frac{\bar{P}(X-a_\ell)}{p(X)}\chi^{\ell}(x,X)\right] \E\left[\Delta^{-\ell} \eta(X) \frac{P(X-a_\ell)}{p(X)}\chi^{-\ell}(X,x)\right] 
\nonumber \\
&\quad - \E\left[\Delta^{-\ell} h(X)
               \frac{P(X-a_\ell)}{p(X)}\chi^{-\ell}(X,x)\right]
               \E\left[\Delta^{-\ell} \eta(X)
               \frac{\bar{P}(X-a_\ell)}{p(X)}\chi^{\ell}(x,X)\right]
               \Bigg) 
               \label{eq:dersol2}.
\end{align}
\end{lem}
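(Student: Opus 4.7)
The proof proceeds by direct substitution, using Lemma~\ref{lem:steinsolutttt} as the backbone and injecting the probabilistic representations of $\mathcal{L}_p^\ell h$ recalled in \eqref{eq:rep_caninv} and of centered functions $\bar f$ from Lemma~\ref{lem:Rp}.

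For \eqref{eq:sol1} and \eqref{eq:sol2} the starting point is $g(x) = \mathcal{L}_p^\ell h(x+\ell)/c(x+\ell)$ from Lemma~\ref{lem:steinsolutttt}. Substituting the ``paired sample'' representation $\mathcal{L}_p^\ell h(x+\ell) = -\mathbb{E}[(h(X_2)-h(X_1))\chi^\ell(X_1,x+\ell,X_2)]/p(x+\ell)$ yields \eqref{eq:sol1}, and using instead $\mathcal{L}_p^\ell h(x+\ell) = -\mathbb{E}[\tilde K_p^\ell(x+\ell,X)\Delta^{-\ell}h(X)]$ yields \eqref{eq:sol2}. Both substitutions are immediate consequences of the two identities in \eqref{eq:rep_caninv}.

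For \eqref{eq:derivivsolrep1} I would plug the formula for $g(x)$ just obtained in \eqref{eq:sol1} into the identity $\Delta^{-\ell}g(x)=(\bar h(x)-\mathcal{T}_p^\ell c(x)g(x))/c(x)$ of Lemma~\ref{lem:steinsolutttt}, reading off \eqref{eq:derivivsolrep1} directly. For \eqref{eq:derivivsolrep2} I would start instead from the equivalent form \eqref{eq:derivsolstek2}, use Lemma~\ref{lem:Rp} to write $\bar h(x) = \mathbb{E}[R_p^\ell(x,X)\Delta^{-\ell}h(X)]$ and $\mathcal{L}_p^\ell h(x+\ell) = -\mathbb{E}[\tilde K_p^\ell(x+\ell,X)\Delta^{-\ell}h(X)]$, then collect the two expectations into a single one using linearity in $\Delta^{-\ell}h(X)$.

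For the refinements \eqref{eq:dersol1}--\eqref{eq:dersol2}, the extra hypothesis $c\in\mathcal{F}_\ell^{(1)}(p)$ ensures that $\bar\eta:=\mathcal{T}_p^\ell c$ is centered, so $c=\mathcal{L}_p^\ell\eta$ and $c(x+\ell)=\mathcal{L}_p^\ell\eta(x+\ell)$ admits the same two probabilistic representations as $\mathcal{L}_p^\ell h(x+\ell)$. Substituting the paired-sample representation into \eqref{eq:derivsolstek2} for both $\mathcal{L}_p^\ell h(x+\ell)$ and $\mathcal{L}_p^\ell\eta(x+\ell)$ factors out the common indicator $\chi^\ell(X_1,x+\ell,X_2)$ and produces \eqref{eq:dersol1}. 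For \eqref{eq:dersol2}, I would apply Lemma~\ref{lem:Rp} to both $\bar h(x)$ and $\bar\eta(x)$ and the $\tilde K_p^\ell$-representation to both $\mathcal{L}_p^\ell h(x+\ell)$ and $\mathcal{L}_p^\ell\eta(x+\ell)$; after renaming the dummy variable in one of the two resulting products of expectations, the difference becomes the expectation over two independent copies $X,X'$ of the antisymmetric kernel $R_p^\ell(x,X')\tilde K_p^\ell(x+\ell,X)-R_p^\ell(x,X)\tilde K_p^\ell(x+\ell,X')$.

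The hard part is the last step, namely to verify that this antisymmetric kernel, multiplied by $p(x+\ell)$, collapses to the clean two-term bracket in \eqref{eq:dersol2}. Inserting the decomposition $R_p^\ell(x,y)=p(y)^{-1}\bigl(P(y-a_\ell)\chi^{-\ell}(y,x)-\bar P(y-a_\ell)\chi^\ell(x,y)\bigr)$ produces four cross terms; two of them drop out immediately (the $\chi$-indicators force the supports of $X,X'$ to be disjoint from the region where the corresponding piece of $\tilde K_p^\ell(x+\ell,\cdot)$ is supported), and the remaining two telescope into the desired expression thanks to the normalization $P(x-b_\ell)+\bar P(x-b_\ell)=1$. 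In the discrete case, boundary values at $X$ or $X'$ equal to $x+\ell$ require separate inspection, but the cancellation remains valid because the corresponding $\chi$-indicators vanish exactly where the naive decomposition would otherwise fail.
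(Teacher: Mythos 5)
Your proposal follows the paper's own route essentially step for step: \eqref{eq:sol1}--\eqref{eq:derivivsolrep2} by direct substitution of the representations \eqref{eq:rep_caninv} and \eqref{eq:rep_h} into the formulae of Lemma \ref{lem:steinsolutttt}; \eqref{eq:dersol1} read off directly from \eqref{eq:derivsolstek2}; and \eqref{eq:dersol2} via exactly the antisymmetrized two-copy kernel $\tilde{K}_p^{\ell}(x+\ell,X)R_p^{\ell}(x,X')-R_p^{\ell}(x,X)\tilde{K}_p^{\ell}(x+\ell,X')$ that the paper isolates as an intermediate identity, followed by a case decomposition relative to $x$.

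The one point to repair is your justification for why two of the four cross terms disappear in the last step. Since $X$ and $X'$ are independent copies each ranging over the whole support of $p$, an indicator in $X'$ cannot make any region for $X$ ``disjoint'' from the support of a piece of $\tilde{K}_p^{\ell}(x+\ell,\cdot)$; nothing drops out for support reasons. What actually happens (and what the paper does) is that one decomposes \emph{both} $\tilde{K}_p^{\ell}(x+\ell,\cdot)$ and $R_p^{\ell}(x,\cdot)$ according to the position of the argument relative to $x$; on each side the piece of $\tilde{K}_p^{\ell}(x+\ell,y)$ is proportional to the corresponding piece of $R_p^{\ell}(x,y)$, with factors $\bar{P}(\cdot)/p(x+\ell)$ on one side and $P(\cdot)/p(x+\ell)$ on the other. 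In the antisymmetrized difference the two ``same-side'' products then cancel pairwise between the two halves, while the two ``cross-side'' products survive and add up with the common factor $\big(\bar{P}(x+a_\ell)+P(x-b_\ell)\big)/p(x+\ell)$, which the paper evaluates using $\bar{P}+P=1$; this is precisely the bracket in \eqref{eq:dersol2}. With that correction your argument is complete and coincides with the paper's.
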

 

\subsection{Stein factors}
\label{sec:examples}


We start with the discrete case, by following arguments in
\cite{ehm1991binomial,barbour1992poisson,erhardsson2005steins} to
obtain the following result.

\begin{lem}[Discrete case, point mass]\label{lem:pointmass2}
  Let $\ell=\pm1$.  Consider $g_{\xi}^{\ell}$ the solution to the
  Stein equation
  \begin{equation}\label{eq:eq-indicator}
\tau_p^{\ell}(x) \Delta^{-\ell}g(x)  - (x - \mathbb{E}[X])  g(x) = \mathbb{I}[x=\xi] - p(\xi)
\end{equation}
If the ratio $\frac{P(x-1)}{\tau_p^+(x)p(x)}$ is non decreasing for
$x\leq \xi$ and the ratio $\frac{1-P(x-1)}{\tau_p^+(x)p(x)}$ is non
increasing for $x> \xi$ then
\begin{align}\label{eq:supg-indicator}
\|g_\xi^{\ell}\|_\infty \leq \max\left\{\frac{P(\xi-1)}{\tau_p^+(\xi)}, \frac{1-P(\xi)}{\tau_p^-(\xi)}\right\},
\end{align} 
and
\begin{align}
\|\Delta g_\xi^{\ell}\|_\infty  
&= \frac{P(\xi-1)}{\tau_p^+(\xi)} + \frac{1-P(\xi)}{\tau_p^-(\xi)} 
\leq \begin{cases}
\frac{1-p(\xi)}{\tau_p^+(\xi)} & \text{ if } \xi\leq \E[X] \\
\frac{1-p(\xi)}{\tau_p^-(\xi)} & \text{ if } \xi\geq \E[X] \\
\end{cases}  \label{eq:supder-indicator}\\
&\leq \frac{1-p(\xi)}{\min\{\tau_p^+(\xi),\tau_p^-(\xi)\}}
\nonumber
\end{align}
More generally, for any Borel set $A$, 
\begin{align}\label{eq:supg-borel}
\|g_A^{\ell}\|_\infty \leq 
\left( \sum_{j\in A}p(j) \right) \sup_{\xi\in A}\left\{ \frac{1}{\tau_p^+(\xi)p(\xi)}, \frac{1}{\tau_p^-(\xi)p(\xi)}\right\}
\end{align} 
and
\begin{align}
\|\Delta g_A^{\ell}\|_\infty
\leq \sup_{\xi \in A} \left(\frac{P(\xi-1)}{\tau_p^+(\xi)} + \frac{1-P(\xi)}{\tau_p^-(\xi)}\right) 
=: \sup_{\xi \in A} B_{p}(\xi)
\label{eq:der-borel}
\end{align}
\end{lem}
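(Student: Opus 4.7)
The plan is to work from the explicit representations of $g_\xi^\ell$ and $\Delta^{-\ell}g_\xi^\ell$ provided by Lemma~\ref{lem:pointmass} (with the special choice $c=\tau_p^\ell$), and then deduce the Borel-set version by linearity via Remark~\ref{rmk:borel}. Before anything else I would record the useful identity
\[
\tau_p^+(x+1)\,p(x+1) \;=\; \tau_p^-(x)\,p(x),
\]
which drops out of the definition of $\tau_p^{\pm} = -\mathcal{L}_p^{\pm}\mathrm{Id}$ by a one-step telescoping of the sum $\sum_{u\leq \cdot}(u-\E[X])p(u)$. This identity is what converts the various $\tau_p^+(x+1)p(x+1)$ factors produced by formula~\eqref{eq:gpointmass} into the $\tau_p^-(x)p(x)$ factors that appear in the bound, and vice versa.

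For the uniform bound~\eqref{eq:supg-indicator} I would plug $c=\tau_p^\ell$ into \eqref{eq:gpointmass} and split $|g_\xi^\ell(x)|$ according to whether $x<\xi$ or $x\geq\xi$. In the first regime the expression is (up to the prefactor $p(\xi)$) exactly $P(x-b_\ell)/(\tau_p^+(x+1)p(x+1))$, which by the first monotonicity hypothesis is maximized on $x<\xi$ at $x=\xi-1$; in the second regime one gets $\bar P(x-b_\ell)/(\tau_p^+(x+1)p(x+1))$, maximized on $x\geq\xi$ at $x=\xi$ by the second hypothesis. Rewriting the two candidate maxima with the identity above yields $P(\xi-1)/\tau_p^+(\xi)$ and $(1-P(\xi))/\tau_p^-(\xi)$ respectively, and the max bound follows.

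For the derivative identity~\eqref{eq:supder-indicator} I would start from formula~\eqref{eq:dersol-indicator} and evaluate at $x=\xi$: after using $1-p(\xi)-(1-P(\xi))=P(\xi-1)$, the two terms collapse to exactly $P(\xi-1)/\tau_p^+(\xi)+(1-P(\xi))/\tau_p^-(\xi)$. To upgrade this equality at $\xi$ into an equality for the supremum, I would show that for $x\neq\xi$ the two summands in \eqref{eq:dersol-indicator} partially cancel, and a case split similar to the one for $g_\xi^\ell$ (again mediated by the identity $\tau_p^+(x+1)p(x+1)=\tau_p^-(x)p(x)$, together with $\tau_p^+(x)-\tau_p^-(x)=x-\E[X]$) yields that $|\Delta^{-\ell}g_\xi^\ell(x)|$ is dominated by its value at $x=\xi$. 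The secondary bounds are then immediate: if $\xi\leq\E[X]$ then $\tau_p^+(\xi)\leq\tau_p^-(\xi)$, so factoring out $1/\tau_p^+(\xi)$ and using $P(\xi-1)+(1-P(\xi))=1-p(\xi)$ gives the first case, and the case $\xi\geq\E[X]$ is symmetric.

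For the Borel-set bounds~\eqref{eq:supg-borel} and \eqref{eq:der-borel} I would invoke Remark~\ref{rmk:borel} to write $g_A^\ell=\sum_{\xi\in A}g_\xi^\ell$, then apply the triangle inequality together with the crude pointwise bound $|g_\xi^\ell(x)|\leq p(\xi)/\bigl(\tau_p^\ell(x+\ell)p(x+\ell)\bigr)$ coming directly from~\eqref{eq:gpointmass}, which after rearrangement factors as $\bigl(\sum_{\xi\in A}p(\xi)\bigr)$ times the $\sup_{\xi\in A}$ in the statement; the derivative estimate~\eqref{eq:der-borel} follows analogously from \eqref{eq:supder-indicator}. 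I expect the main obstacle to be the verification that the global maximum of $|\Delta^{-\ell}g_\xi^\ell|$ is indeed attained at $x=\xi$, since the two terms in~\eqref{eq:dersol-indicator} reinforce at $\xi$ but can have opposite signs elsewhere; carefully tracking those cancellations, leaning on the algebraic identities between $\tau_p^+$ and $\tau_p^-$, is where the real bookkeeping will sit.
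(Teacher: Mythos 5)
Your handling of the uniform bound \eqref{eq:supg-indicator} is essentially the paper's argument: the two monotonicity hypotheses locate the extrema of $|g^\ell_\xi|$ on either side of $\xi$, and the identity $\tau_p^+(x+1)p(x+1)=\tau_p^-(x)p(x)$ converts the two candidate maxima into $P(\xi-1)/\tau_p^+(\xi)$ and $(1-P(\xi))/\tau_p^-(\xi)$. The other two parts have real gaps. For \eqref{eq:supder-indicator} you compute the value at $x=\xi$ correctly but explicitly defer the decisive step --- that $|\Delta^{-\ell}g^\ell_\xi|$ is maximized at the jump --- to unspecified ``bookkeeping'' on the two summands of \eqref{eq:dersol-indicator}. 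That route is not carried out and is needlessly hard; the two terms do not cancel in any transparent way. The paper closes this step with an observation you already have in hand: the hypotheses make $g^-_\xi$ monotone on $\mathcal{S}(p)\cap(-\infty,\xi]$ and on $\mathcal{S}(p)\cap(\xi,\infty)$, with one sign on the first piece and the opposite sign on the second, so every increment other than the one from $\xi$ to $\xi+1$ is a difference of same-signed values inside a monotone run and is dominated by the jump $g^-_\xi(\xi)-g^-_\xi(\xi+1)$, which is exactly $P(\xi-1)/\tau_p^+(\xi)+(1-P(\xi))/\tau_p^-(\xi)$. You should finish the argument that way rather than through \eqref{eq:dersol-indicator}.

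The Borel-set bounds do not follow from ``triangle inequality plus the crude pointwise bound.'' That combination gives $\bigl(\sum_{\xi\in A}p(\xi)\bigr)\cdot\bigl(\tau_p^\ell(x+\ell)p(x+\ell)\bigr)^{-1}$ with $x$ ranging over all of $\mathcal{S}(p)$, i.e.\ a supremum over the whole support in the denominator; this can be far larger than the stated $\sup_{\xi\in A}$ (take $A$ bounded and $\tau_p^{\pm}p$ vanishing in the tails), so \eqref{eq:supg-borel} is not proved. The paper again exploits the sign and monotonicity structure to show that $|g_A^\ell|$ attains its maximum at $\min A$ or $\max A+1$, which forces the denominators to be evaluated at points of $A$. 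Similarly, \eqref{eq:der-borel} is not ``analogous'': a termwise triangle inequality yields the \emph{sum} of the $\|\Delta g_\xi\|_\infty$ over $\xi\in A$, not their supremum. One needs that, for fixed $x$, the increments $\Delta g_\xi(x)$ with $\xi\neq x$ all share a sign, so the contributions of the different $\xi\in A$ are controlled by a single term rather than accumulating.
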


\medskip For general $h$, representations \eqref{eq:sol1} to
\eqref{eq:dersol2} lead to the following bounds.

  \begin{prop} 
    \label{prop:boundsonsol} 
 Let $\kappa_1(h) = \sup_{y\in\mathcal{S}(p)}h(y) -
      \inf_{y\in\mathcal{S}(p)}h(y)$ and $\kappa_2(h) =
 \sup_{y\in\mathcal{S}(p)}|\Delta^{-\ell}h(y)|$.     Let $g$ be the function defined by \eqref{eq:solstek1}. Suppose
    that $c>0$ on the interior of the support of $p$.  Then 
\begin{enumerate}
\item \label{item:1} If $h$ is bounded then
   \begin{equation}\label{eq:bound1}
    \left| g(x) \right| \le   \kappa_1(h) \frac{P(x-b_{\ell})
      \bar{P}(x-b_{\ell})}{p(x+\ell)} \frac{1}{c(x+\ell) } 
  \end{equation}
  and
  \begin{align}
  \label{eq:nextbound}
  \left| \Delta^{-\ell}g(x) \right| \le \kappa_1(h)\frac{1}{c(x)} \left(1
    + \frac{|\mathcal{T}_p^{\ell}c(x)|}{c(x+\ell)}    \frac{P(x-b_{\ell})
      \bar{P}(x-b_{\ell})}{p(x+\ell)}     \right).
\end{align}
\item \label{item:2} If $\Delta^{-\ell}h$ exists and is bounded then
  \begin{align}\label{eq:bounddelta1}
    \left| g(x) \right|
   &\leq
   \kappa_2(h)
     \frac{\tau_p^\ell(x+\ell)}{{c(x+\ell)}  }         
\end{align}
 and
  \begin{align}
    \label{eq:next9}
  \left| \Delta^{-\ell}g(x) \right| \le
    \kappa_2(h) \left(  \frac{|x -\mathbb{E}[X]|}{c(x)} +
    \frac{|\mathcal{T}_p^{\ell}c(x)|}{c(x)}
    \frac{\tau_p^{\ell}(x+\ell)}{c(x+\ell)} \right). 
  \end{align}
\end{enumerate}
 If, moreover, $c \in \mathcal{F}_{\ell}^{(1)}(p)$ is of the form
 $c = - \mathcal{L}_p^{\ell} \eta$, then the following also hold true. 
 \begin{enumerate}
 \item[3.] 

If  $h$ satisfies
  $|h(x) - h(y)| \le k |\eta(x) - \eta(y)|$ then
\begin{equation}
  \label{eq:boundgLip}
  \| g\|_{\infty} \le k.
\end{equation}
\item[4.] \label{item:4} If $h$ is bounded then
  \begin{align}
 | \Delta^{-\ell} g(x) |  
& \le \kappa_1(h)   \frac{1}{-\mathcal{L}_p^\ell \eta(x)}
     \left(1+\frac{|\bar \eta(x)| }{ -\mathcal{L}_p^\ell
      \eta(x+\ell)   }  \frac{P(x-b_{\ell}) \bar{P}(x +
      a_{\ell})}{p(x+\ell)}  
\right).                                                                                                   \label{eq:bounddelta4}     
    \end{align}
\item[5.] \label{item:5} If $\Delta^{-\ell}h$ exists and is bounded then
 \begin{align}
\nonumber |    \Delta^{-\ell} g(x) | &  \le  \kappa_2(h)
 \frac{1}{p(x+\ell)\big(-\mathcal{L}_p^\ell\eta(x)\big)\big(-\mathcal{L}_p^\ell\eta(x+\ell)\big)}  \nonumber \\
&\qquad \times
                                                                                                                  \Bigg(\E\left[\frac{\bar{P}(X+b_\ell)}{p(X)}\chi^{\ell}(x,X)\right] \E\left[\Delta^{-\ell} \eta(X) \frac{P(X-a_\ell)}{p(X)}\chi^{-\ell}(X,x)\right]
\nonumber \\
&\qquad \quad  + \E\left[\frac{P(X-a_\ell)}{p(X)}\chi^{-\ell}(X,x)\right]
               \E\left[\Delta^{-\ell} \eta(X)
               \frac{\bar{P}(X+b_\ell)}{p(X)}\chi^{\ell}(x,X)\right]
               \Bigg)\label{eq:bounddelta5}
                 \end{align}

 \end{enumerate}
\end{prop}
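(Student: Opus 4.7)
The plan is to derive each of the five stated bounds by systematically applying the representation formulas from Lemma~\ref{prop:deltagrep}, together with the elementary identities in Remark~\ref{rmk:expK} and Lemma~\ref{lem:Rp}. Throughout I would rely on two structural facts: the kernel $\tilde K_p^{\ell}$ is non-negative while $R_p^{\ell}(x,\cdot)$ has a known sign structure (positive below $x$, negative above), and the shifts satisfy the identities $x+\ell-a_{\ell} = x-b_{\ell}$ and $x+\ell+b_{\ell} = x+a_{\ell}$ in each of the three cases $\ell \in \{-1,0,+1\}$.

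Items~\ref{item:1} and~\ref{item:2} use only the basic representations \eqref{eq:sol1}--\eqref{eq:derivivsolrep2} together with \eqref{eq:derivsolstek1}. For item~\ref{item:1}, I would bound $|h(X_2)-h(X_1)|\le \kappa_1(h)$ inside \eqref{eq:sol1} and compute $\mathbb{E}[\chi^{\ell}(X_1,x+\ell,X_2)] = P(x-b_{\ell})\bar P(x-b_{\ell})$ by independence of $X_1,X_2$, which gives \eqref{eq:bound1}; then \eqref{eq:nextbound} follows from \eqref{eq:derivsolstek1} by combining this bound on $g$ with $|\bar h(x)|\le \kappa_1(h)$, which holds because $h(x)$ and $\mathbb{E}[h(X)]$ both lie in $[\inf h,\sup h]$. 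For item~\ref{item:2}, the estimate \eqref{eq:bounddelta1} is immediate from \eqref{eq:sol2}: pull out $\kappa_2(h)=\sup|\Delta^{-\ell}h|$ and invoke $\mathbb{E}[\tilde K_p^{\ell}(x+\ell,X)] = \tau_p^{\ell}(x+\ell)$ from Remark~\ref{rmk:expK}. Then \eqref{eq:next9} follows from \eqref{eq:derivsolstek1} by using \eqref{eq:bounddelta1} on the $g$-term and, for the $\bar h(x)$-term, invoking Lemma~\ref{lem:Rp} together with the mean identity $\mathbb{E}[R_p^{\ell}(x,X)] = x-\mathbb{E}[X]$ in Remark~\ref{rmk:expK}.

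For items~3,~\ref{item:4} and~\ref{item:5} the extra structure $c=-\mathcal{L}_p^{\ell}\eta$ is available. For item~3 I would rewrite the denominator $p(x+\ell)c(x+\ell)$ in \eqref{eq:sol1} as $\mathbb{E}[(\eta(X_2)-\eta(X_1))\chi^{\ell}(X_1,x+\ell,X_2)]$ by applying the third representation in \eqref{eq:rep_caninv} to $\eta$; this recasts $g(x)$ as a ratio in which numerator and denominator range over the event $\{X_1\le X_2\}$ implicit in $\chi^{\ell}$, and the hypothesis $|h(X_2)-h(X_1)|\le k|\eta(X_2)-\eta(X_1)|$ then immediately yields \eqref{eq:boundgLip}. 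Item~\ref{item:4} follows from \eqref{eq:derivivsolrep1} after substituting $c=-\mathcal{L}_p^{\ell}\eta$ (so that $\mathcal{T}_p^{\ell}c = \bar\eta$) and bounding $|\bar h(x)|\le \kappa_1(h)$ together with the sharp form of the $g$-bound, producing \eqref{eq:bounddelta4}. For item~\ref{item:5}, the plan is to start from \eqref{eq:dersol2}, write its right-hand side as a difference $AB-CD$ of products of four expectations, bound $|AB-CD|\le |A||B|+|C||D|$, and pull $\kappa_2(h)$ out of the two factors in which $\Delta^{-\ell}h$ appears; the remaining two factors continue to carry $\Delta^{-\ell}\eta$, reproducing \eqref{eq:bounddelta5}.

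The main obstacle is the case-by-case bookkeeping needed in item~\ref{item:5}: one must verify separately for $\ell\in\{-1,0,+1\}$ that the shifts $X\pm a_{\ell}$, $X\pm b_{\ell}$ appearing in the tail functions $P$, $\bar P$ of \eqref{eq:dersol2} collapse to those of \eqref{eq:bounddelta5}, since in the discrete cases these are not interchangeable and an off-by-one mistake would propagate through the bound. A secondary difficulty is that the sign argument in item~3 requires $\eta(X_2)-\eta(X_1)\ge 0$ on the support of $\chi^{\ell}(X_1,x+\ell,X_2)$, which has to be traced back to the monotonicity of $\eta$ and the standing assumption that $c=-\mathcal{L}_p^{\ell}\eta>0$ on the interior of $\mathcal{S}(p)$.
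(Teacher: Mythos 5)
Your overall strategy is exactly the one the paper intends (the proof of Proposition \ref{prop:boundsonsol} is omitted there and is meant to follow from the representations \eqref{eq:sol1}--\eqref{eq:dersol2} precisely as you describe), and items \ref{item:1}, the first half of item \ref{item:2}, and items 3--5 go through along your lines. Two small caveats: in item 3, positivity of $c=-\mathcal{L}_p^{\ell}\eta$ does \emph{not} by itself force $\eta(X_2)\ge\eta(X_1)$ on the event $\chi^{\ell}(X_1,x+\ell,X_2)=1$; you genuinely need $\eta$ monotone (which holds for $\eta=\pm\mathrm{Id}$, the only cases used). In items 4 and 5 your route produces $\bar P(x-b_{\ell})$ and $\bar P(X-a_{\ell})$ where the statement has $\bar P(x+a_{\ell})$ and $\bar P(X+b_{\ell})$; these coincide for $\ell=0$ but differ by one unit in the discrete cases, so the bookkeeping you flag is real and your argument only delivers the (slightly weaker) shifted version.

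The genuine gap is in your derivation of \eqref{eq:next9}. You propose to bound the term $\bar h(x)/c(x)$ by writing $\bar h(x)=\mathbb{E}[R_p^{\ell}(x,X)\Delta^{-\ell}h(X)]$ and then invoking $\mathbb{E}[R_p^{\ell}(x,X)]=x-\mathbb{E}[X]$. But pulling $\kappa_2(h)$ out of that expectation gives $\kappa_2(h)\,\mathbb{E}\big[|R_p^{\ell}(x,X)|\big]$, not $\kappa_2(h)\,\big|\mathbb{E}[R_p^{\ell}(x,X)]\big|$, and — as you yourself observe — $R_p^{\ell}(x,\cdot)$ changes sign at $x$, so $\mathbb{E}|R_p^{\ell}(x,X)|$ is strictly larger than $|x-\mathbb{E}[X]|$ in general (for $\ell=0$ it equals $\int_a^xP+\int_x^b\bar P$, the quantity appearing in the alternative $\tilde M$-type bounds of the examples). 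There is no way to close this step: taking the standard normal with $c=1$ and $h(y)=|y|$, the right-hand side of \eqref{eq:next9} vanishes at $x=0$ while $g'(0)=\bar h(0)=-\sqrt{2/\pi}\neq 0$. So the inequality \eqref{eq:next9} as displayed cannot be obtained by your (or any) argument; the bound your method actually proves replaces $|x-\mathbb{E}[X]|$ by $\mathbb{E}|R_p^{\ell}(x,X)|$.
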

In order to
lighten the notations,  in the sequel we write $\kappa_j$ for
$\kappa_j(h)$, $j=1, 2$.

\begin{rmk}
  We remark that for $\ell = 0$ (the continuous case), the non uniform
  bounds in \eqref{eq:bounddelta1} and \eqref{eq:bounddelta4} are
  \emph{exactly} the optimal bounds for all Lipschitz-continuous
  functions $h$ among all bounds involving the factor
  $\kappa_2(h)=\| h'\|_{\infty}$, as demonstrated in \cite[Proposition
  3.13]{dobler2015stein}.  Taking $\ell = 0$ and $c(x) =1$ leads to
  (improvements of) the bounds discussed in
  \cite{chatterjee2011nonnormal} (see their Lemma 4.1).
\end{rmk}

\begin{rmk}\label{rem:scalesteiq} There exist many papers with bounds
  on Stein factors. There is often a difference in scaling between our
  Stein equation and the one used in those papers, that is we use some
  function $\eta$ and the literature rather uses $r \eta$ for some
  scalar factor $r \neq 0$. Such scaling obviously has an effect on
  the bounds, which have to be divided by powers of $|r|$ according to
  the occurrences of $\eta$ in their expressions. 
\end{rmk}
\begin{rmk}
  An important
  reference on Stein factors is \cite{dobler2018gamma} who consider
  the case of a gamma target. We do not recover their results exactly,
  because in that paper the equations are extended to the real
  line. See also \cite{dobler2012stein} (i.e.\ the arXiv version of
  \cite{dobler2015stein}) for an in depth first study of the problem
  of extending Stein equations outside the support of the target.
\end{rmk}


\begin{exm}[Standard normal distribution] \label{ex:norm} Continuing
  Example \ref{ex:gau3}, we consider $g$ the solution to 
  \begin{equation*}
    g'(x)-xg(x) = h(x) - \mathbb{E}[h(X)]
  \end{equation*}
  given in \eqref{eq:gausssoljenaimarre}. Applying Proposition
  \ref{prop:boundsonsol}, the following holds:
  \begin{align*}
    & |g(x)|   \le \min \left( \kappa_1   \frac{\Phi(x)(1- \Phi(x))}{\varphi(x)}
      , \kappa_2  \right) \le  \min\left(\kappa_1
      \frac{1}{2}\sqrt{\frac{\pi}{2}}, \kappa_{2}\right)\\ 
    & |g'(x)| \le  \kappa_1  \left(1 + |x|\frac{\Phi(x)(1-
      \Phi(x))}{\varphi(x)} \right) \le 2 \kappa_1 \\ 
    & |g'(x)| \le 2 \kappa_2 \min\left(|x|, \frac{\int_{-\infty}^x \Phi(u)
      \mathrm{d}u\int_{x}^\infty(1- \Phi(u)) \mathrm{d}u}{\varphi(x)}\right)
      \le 2\kappa_2  \min \left( \sqrt{\frac{2}{\pi}}, |x|\right)
  \end{align*}
  To our own surprise, the first bound (both the uniform and the
  non-uniform one) appears to be a strict improvement on the known
  bound in this case, from e.g.\ \cite[Lemma 2.4]{chen2010normal} or
  \cite[Theorem 3.3.1]{nourdin2012normal}.  Each of the uniform bounds
  are equivalent to the known bound in this case; it is not clear to
  us whether the non uniform bounds are known (though, once again, we
  stress that the bounds involving $\kappa_2$ are in some sense
  available in \cite{dobler2015stein}).
\end{exm}

\begin{exm}[Exponential distribution]\label{ex:exp}
Continuing Example \ref{ex:expon3}, we consider the two different
situations. First, $g_1$ is solution to 
\begin{equation*}
 g_1'(x) -   \lambda g_1(x)  = h(x)- \mathbb{E}[h(X)] 
\end{equation*}
over the positive real line, given by \eqref{eq:expsollll1}. Applying Proposition
  \ref{prop:boundsonsol} (with $c(x) = 1$ and
  $\tau_{\mathrm{exp}}^0(x) = \lambda x$), the following holds: 
\begin{align*}
  & |g_1(x)| \le \frac{1}{\lambda} \min\left(\kappa_1( 1-e^{-\lambda x}), \kappa_2
    x\right) \\
  & |g_1'(x)| \le \min\left(\kappa_1 \left( 2-e^{-\lambda x} \right), \kappa_2 (|x-\lambda|+x)\right)
\end{align*}
Note that only items \ref{item:1} and \ref{item:2} apply because
$c(x) = 1 \notin \mathcal{F}_{\ell}^{(1)}( \mathrm{exp})$.  Second,
$g_2$ is solution to
\begin{equation*}
 \frac{x}{\lambda} g_{2}'(x) -      \left(x- \frac{1}{\lambda}\right)g_{2}(x)
 = h(x) - \mathbb{E}[h(X)]
  \end{equation*}
  over the positive real line, given  by \eqref{eq:expsollll2}. Here
  all the items of Proposition   \ref{prop:boundsonsol} apply  (with
  $c(x) = x/\lambda$), yielding 
\begin{align*}
  & |g_2(x)| \le  \min\left(\kappa_1  \frac{1-e^{-\lambda x}}{x}, \kappa_2
    \right) \\
  & |g_2'(x)| \le  \kappa_1 \frac{\lambda}{x}  \left( 1 + \bigg|x-\frac{1}{\lambda}\bigg|
    \frac{1-e^{-\lambda x}}{x} \right) \\
    & |g_2'(x)| \le   2\kappa_2 \min\left(\left| \lambda - \frac{1}{x}
      \right|,  \frac{1}{x} \left( 1 - \frac{1-e^{-\lambda x}}{\lambda x} \right)\right)
\end{align*}
The first bound is uniformly smaller than the bound $1/x$ of
\cite{chatterjee2006exponential} (bound for $\lambda=1$);  the other
bounds are of same order as \cite{chatterjee2006exponential}. 
\end{exm}

\begin{exm}[Poisson distribution] \label{ex:pois} We continue Example
  \ref{ex:poi3}. We consider the  solutions  $g^+$ and $g^-$ to 
  \begin{align*}
    &   x \Delta^-g^+(x) - \left(x-\lambda\right)g^+(x)   = h(x) - \mathbb{E}[h(X)] \\
&  \lambda \Delta^+g^-(x) -  \left(x-\lambda\right)g^-(x)  = h(x) - \mathbb{E}[h(X)] 
    \end{align*}
    given in \eqref{eq:jnm1} and \eqref{eq:jnm2}, respectively. Recall
    that $g^-$ is the classical solution to the usual equation for the
    Poisson; also $g^+(x) = g^-(x+1)$ and
    $\Delta^+g^-(x) = \Delta^-g^+(x)$.  Applying
    Proposition~\ref{prop:boundsonsol} (with $\ell = -1$ and
    $c(x) = \lambda$ or $\ell = 1$ and $c(x) = x$), the following
    holds (recall $\bar{P}(x) = 1-P(x)$):
    \begin{align}
      &     |g^-(x)| \le \min \left(
        \kappa_1\frac{P(x-1)\bar P(x-1))}{\lambda p(x-1)},
        \kappa_2   \right) \label{eq:boundgm}\\
      &  | \Delta^+g^-(x) | \le  \kappa_1 \min \left(
        \frac{1}{\lambda} +
        \frac{\left|x-\lambda\right|}{\lambda^2}
        \frac{P(x-1) \bar P(x-1)}{p(x-1)}, \frac{1}{x}+\frac{|x-\lambda|}{x(x+1)}
        \frac{P(x)\bar P(x)}{p(x+1)} \right)\label{eq:delboundgm1}\\
      &  | \Delta^+g^-(x) | \le 2 \kappa_2   \min \left(
        \frac{ |x-\lambda|}{\lambda} ,  \frac{ |x-\lambda|}{x},     \frac{\sum_{j=0}^{x-1} P(j)
         \sum_{j=x}^\infty \bar P(j)}{\lambda x p(x)}   \right) \label{eq:delboundgm2}
    \end{align}
    (we only give the bounds in terms of $g^-$; those for $g^+$ follow
    trivially).  One can see, as illustrated in Figure
    \ref{fig:poisBa}, that the non uniform bound in \eqref{eq:boundgm}
    is strictly smaller than $\min( 1, \sqrt{{2}/{(e \lambda)}})$
    which thus yields an improvement on the classical bound, e.g.\ in
    \cite[Theorem 2.3]{erhardsson2005steins}; the constant bound -- in
    terms of $\kappa_2$ -- is already available in \cite[Remark
    1.1.6]{barbour1992poisson} (proof in \cite{barbour2006stein}).
    The bound \eqref{eq:delboundgm1} is of similar order to the
    classical $(1-e^{-\lambda})/\lambda$ (see Figure
    \ref{fig:poisBb}), but does not improve everywhere.  Finally the
    bound \eqref{eq:delboundgm2} strictly improves on the bound
    $\min(1, 8/(3\sqrt{2e\lambda}))$ from \cite{barbour1992poisson},
    as illustrated in Figure \ref{fig:poisBc} for $\lambda=10$.

\begin{figure}[h]
\begin{center}
\begin{subfigure}{.3\textwidth}
  \centering
  \includegraphics[width=1\textwidth]{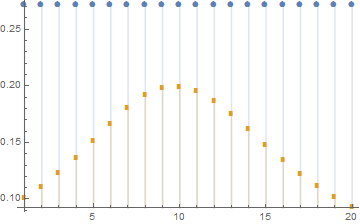}
  \caption{}
  \label{fig:poisBa}
\end{subfigure}%
\quad 
\begin{subfigure}{.3\textwidth}
  \centering
\includegraphics[width=1\textwidth]{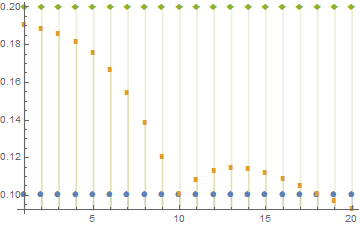}
  \caption{}
  \label{fig:poisBb}
\end{subfigure}%
\quad
\begin{subfigure}{.3\textwidth}
  \centering
\includegraphics[width=1\textwidth]{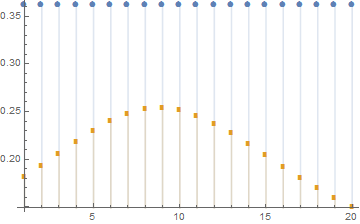}
  \caption{}
  \label{fig:poisBc}
\end{subfigure}%
\end{center}
\caption{\small Figure \ref{fig:poisBa} gives the non-uniform bound
  \eqref{eq:boundgm} (orange curve) as well as the classical bound
  $\min(1, \sqrt{2/ \lambda})$ (blue curve). Figure \ref{fig:poisBb}
  gives the non uniform bound \eqref{eq:delboundgm1} (orange curve),
  the bound $(1-e^{-\lambda})/\lambda$ (blue curve) and $2/\lambda$
  (green curve). Figure \ref{fig:poisBc} gives the non uniform bound
  \eqref{eq:delboundgm2} (orange curve) and the bound $\min(1,
  8/(3\sqrt{2e\lambda}))$ (blue curve).  
  All cases correspond to the Poisson
  distribution of parameter $\lambda=10$.}
\label{fig:poisB}
\end{figure}    

Lemma \ref{lem:pointmass2} also applies to this case, because the
Poisson distribution satisfies the conditions (monotonicity of the two
ratios for any $\xi \in \mathcal{S}(p)$).  Therefore, the bound
\eqref{eq:supg-indicator} on the solution of equation
\eqref{eq:eq-indicator} becomes:
\begin{align}\label{eq:pointmass1}
\| g_\xi\|_\infty \leq \max\left\{\frac{P(\xi-1)}{\xi}, \frac{\bar
  P(\xi-1)}{\lambda}\right\}, 
\end{align} 
as illustrated in Figure \ref{fig:poisa}.  Moreover, the bound
\eqref{eq:supder-indicator} becomes
\begin{align}\label{eq:pointmass2}
||\Delta^+ g_\xi||_\infty &
= \frac{P(\xi-1)}{\xi} + \frac{1-P(\xi)}{\lambda}
\leq \min\left\{\frac{1}{\xi},\frac{1-e^{-\lambda}}{\lambda}\right\} .
\end{align}
For any Borel set $A\subset \mathcal{S}(p)$, the solution is bounded by \eqref{eq:supg-borel}
\begin{align*}
\|g_A\|_\infty \leq 
\left( \sum_{j\in A}p(j) \right) \sup_{\xi\in A}\left\{ \frac{1}{\xi p(\xi)}, \frac{1}{\lambda p(\xi)}\right\}
\end{align*}
and the bound \eqref{eq:der-borel} gives
\begin{align*}
||\Delta g_A||_\infty &\leq 
\sup_{x \in A} \left(\frac{P(x-1)}{x} + \frac{1-P(x)}{\lambda}\right) 
\leq \frac{1-e^{-\lambda}}{\lambda}
\end{align*}
which is the bound given in \cite[Lemma 1.1.1]{barbour1992poisson}.

\begin{figure}[h]
  \begin{center}
    \begin{subfigure}{.4\textwidth}
  \centering
\includegraphics[width=1\textwidth]{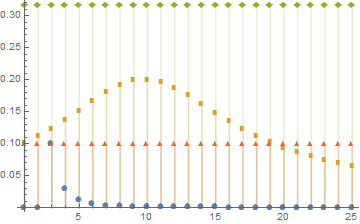}
  \caption{}
  \label{fig:poisa}
\end{subfigure}%
\quad
    \begin{subfigure}{.4\textwidth}
  \centering
\includegraphics[width=1\textwidth]{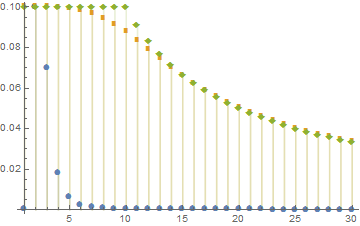}
  \caption{}
  \label{fig:poisb}
\end{subfigure}%
\end{center}
\caption{\small Figure \ref{fig:poisa} gives the numerical exact value
  of the function $|g_{\xi}|$ (blue curve), the bound
  \eqref{eq:boundgm} (orange curve), the bound \eqref{eq:pointmass1} (red
  curve) and $\min(1, 1/|\sqrt{\lambda})$ (green curve). Figure
  \ref{fig:poisb} gives the numerical exact value of the function
  $|\Delta^+ g_{\xi}|$ (blue curve), the first bound in \eqref{eq:pointmass2}
  (orange curve) and the second one (green curve).  All cases
  correspond to the Poisson distribution with parameter $\lambda =
  10$ at value $\xi=2$.} 
\label{fig:pois}
\end{figure}
\end{exm}

More examples are provided in the supplementary material, namely
uniform and non uniform Stein factors for the beta, gamma, $\chi^2$,
Student, binomial and negative binomial distributions.

\section{Bounds on IPMs and comparison of generators}
\label{sec:discr}

As described in the introduction, one of the purposes of the material
of Section \ref{sec:formalism} is to provide quantitative bounds on a
distance between an approximating distribution $X_n$, say, and a
target distribution, $X_{\infty}$. Straightforward manipulation of the
definitions lead to the following very general abstract
results.

  \begin{thm}[Stein discrepancies] \label{sec:steins-dens-appr-1}
    Let $X_n \sim p_n$ be some random variable and let $X_{\infty}$
    have canonical Stein operators
    $\mathcal{T}_{\infty}^{\ell_{\infty}}$ and
    $\mathcal{L}_{\infty}^{\ell_{\infty}}$ for some
    $\ell_{\infty} \in \left\{ -1, 0, 1 \right\}$. Then, for all
    $\eta \in L^1(p_n)$ and all $c_1 \in
    \mathrm{dom}(\mathcal{T}_\infty^{\ell})$ and $h \in
    L^1(p_{\infty})\cap     L^1(p_n)$ we have
    \begin{align}
     &  \mathbb{E}h(X_n) - \mathbb{E}h(X_{\infty}) \nonumber \\
       & \label{eq:IPM1ex}  =  
    \mathbb{E} \left[(\eta_1(X_n) - \mathbb{E}[\eta_1(X_{\infty})])
         \frac{\mathcal{L}_{\infty}^{\ell_{\infty}}h(X_n)}{\mathcal{L}_{\infty}^{\ell_{\infty}} 
         \eta(X_n)}\right] 
    + \mathbb{E}\left[\mathcal{L}_\infty^{\ell_{\infty}} \eta_1(X_n)
      \Delta^{-\ell_{\infty}} \left(\frac{\mathcal{L}_\infty^{\ell_{\infty}}
                     h(\cdot+\ell_{\infty})}{\mathcal{L}_\infty^{\ell_{\infty}}
                     \eta(\cdot+\ell_{\infty})} \right)(X_n)\right]\\
        &  \label{eq:IPM2ex}  =
 \mathbb{E}\left[\big(\mathcal{T}_{\infty}^{\ell_{\infty}} c_1(X_n)
      \big)\frac{\mathcal{L}_{\infty}^{\ell_{\infty}}h(X_n)}{c_1(X_n)}\right] +
          \mathbb{E}\left[c_1(X_n)
          \Delta^{-\ell_{\infty}}\left(\frac{\mathcal{L}_\infty^{\ell_{\infty}} 
                     h(\cdot+\ell_{\infty})}{c_1(\cdot+\ell_{\infty})} \right)(X_n) \right].
    \end{align}
  In particular the IPMs \eqref{eq:defIPM} can be written as suprema
    of either of the above.
  \end{thm}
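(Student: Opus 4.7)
My approach starts from the fundamental pointwise identity $\mathcal{T}_\infty^{\ell_\infty} \mathcal{L}_\infty^{\ell_\infty} h(x) = h(x) - \mathbb{E}h(X_\infty)$ valid on $\mathcal{S}(p_\infty)$ for every $h\in L^1(p_\infty)$, as recalled in the theorem of Section \ref{sec:formalism-1}. Integrating both sides against the law of $X_n$ yields the common starting relation
\begin{equation*}
\mathbb{E}h(X_n) - \mathbb{E}h(X_\infty) \;=\; \mathbb{E}\!\left[\mathcal{T}_\infty^{\ell_\infty}\mathcal{L}_\infty^{\ell_\infty} h(X_n)\right],
\end{equation*}
so it remains only to expand the right-hand side in two different ways, using the two standardizations \eqref{eq:standop} and \eqref{eq:standop2} of the canonical operator.

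For \eqref{eq:IPM2ex}, I would factor $\mathcal{L}_\infty^{\ell_\infty} h(x) = c_1(x)\,\tilde g(x-\ell_\infty)$ with $\tilde g(y) = \mathcal{L}_\infty^{\ell_\infty} h(y+\ell_\infty)/c_1(y+\ell_\infty)$; this is exactly the solution formula \eqref{eq:solstek1} from Lemma \ref{lem:steinsolutttt}. The product rule stated just above Definition \ref{def:standop} then gives
\begin{equation*}
\mathcal{T}_\infty^{\ell_\infty}\bigl(c_1(\cdot)\tilde g(\cdot-\ell_\infty)\bigr)(x) \;=\; \bigl(\mathcal{T}_\infty^{\ell_\infty} c_1(x)\bigr)\,\tilde g(x) + c_1(x)\,\Delta^{-\ell_\infty}\tilde g(x),
\end{equation*}
and taking expectations under $p_n$ produces the claimed representation \eqref{eq:IPM2ex}. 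For \eqref{eq:IPM1ex}, I would repeat the same construction with the choice $c_1 = \mathcal{L}_\infty^{\ell_\infty}\eta$, and then invoke the fundamental identity a second time (on $\eta$ in place of $h$) to replace $\mathcal{T}_\infty^{\ell_\infty} c_1$ by $\eta - \mathbb{E}\eta(X_\infty)$; this is precisely the content of the alternative standardization \eqref{eq:standop2}, and the resulting identity is \eqref{eq:IPM1ex}.

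The final assertion about IPMs is then immediate: since each $h\in\mathcal{H}$ gives an equality between $\mathbb{E}h(X_n)-\mathbb{E}h(X_\infty)$ and the two right-hand sides, taking the absolute value and the supremum over $\mathcal{H}$ in the definition \eqref{eq:defIPM} expresses $\mathcal{D}_{\mathcal{H}}(X_n,X_\infty)$ as the supremum of either form. The only genuine obstacle is a bookkeeping one: under the stated integrability hypotheses one must verify that the two summands on the right are each individually $p_n$-integrable so that the expectation of their sum can be split, and that $c_1$ (respectively $\mathcal{L}_\infty^{\ell_\infty}\eta$) does not vanish on $\mathcal{S}(p_n)\cap\mathrm{int}(\mathcal{S}(p_\infty))$ so that the factorization is well-defined $p_n$-almost surely. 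As the theorem is stated under the tacit assumption that all displayed expectations exist, I expect these checks to be routine on a case-by-case basis rather than to present a real difficulty.
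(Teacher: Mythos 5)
Your proof is correct and follows exactly the route the paper intends (the paper offers no explicit proof, calling the theorem a ``straightforward manipulation of the definitions,'' and the same mechanism reappears in its proof of Theorem \ref{prop:EhEh}): apply the inverse-operator identity $\mathcal{T}_\infty^{\ell_\infty}\mathcal{L}_\infty^{\ell_\infty}h=\bar h$ under $\mathbb{E}_{p_n}$, factor via the solution formula \eqref{eq:solstek1}, and expand with the product rule, taking $c_1=\mathcal{L}_\infty^{\ell_\infty}\eta$ to pass to \eqref{eq:IPM1ex}. The only caveat worth adding to your bookkeeping list is that the pointwise identity $\mathcal{T}_\infty^{\ell_\infty}\mathcal{L}_\infty^{\ell_\infty}h=\bar h$ holds only on $\mathcal{S}(p_\infty)$, so integrating it against $p_n$ tacitly requires $\mathcal{S}(p_n)\subseteq\mathcal{S}(p_\infty)$ (or suitable conventions off the support), which is consistent with the non-stated regularity assumptions under which the theorem is formulated.
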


  The generality of the expressions in \eqref{eq:IPM1ex} and
  \eqref{eq:IPM2ex} (we stress that there is basically full freedom of
  choice in the functions $\eta, c_1$ and $h$!) ensure that all first
  order Stein equations from the literature can easily be rewritten
  particularizations of these expressions. Moreover, the dependence on
  the test functions $\eta, c_1$ and $h$ is made explicit which
  therefore permits further simplifications in line with the results
  from Section \ref{sec:examples}.  It still remains, of course, to
  show that our abstract formulations actually provide some benefits.
  This we now demonstrate by concentrating on comparison of random
  variables $X_n$ and $X_{\infty}$ under the additional assumption
  that both have an accessible Stein operators. For convenience we
  also impose $\ell_n = \ell_{\infty} = \ell$ with the added
  assumption that both the target and the approximating laws are a.c.\
  with respect to the same dominating measure.  This assumption
  provides many simplifications but is in no way necessary, see Remark
  \ref{rmk:compadisc} and Example \ref{ex:binotonormsu} in the
  supplementary material.

  The first step is to
  associate to $X_n$ its Stein operators $\mathcal{T}_n^{\ell}$ and
  $\mathcal{L}_n^{\ell}$. Then we can withdraw 0 in identities
  \eqref{eq:stid1} and \eqref{eq:stid2} to obtain
  \begin{align}
&\mathbb{E}h(X_n) - \mathbb{E} h(X_{\infty})  \nonumber\\
    &  =  
    \mathbb{E} [(\eta_1(X_n)-\eta_2(X_n))g_h(X_n)] 
    + \mathbb{E}\left[(\mathcal{L}_\infty^\ell \eta_1(X_n)-\mathcal{L}_n^\ell\eta_2(X_n)\big) \Delta^{-\ell} g_h(X_n)\right]+ \kappa_{\eta_2}^\ell(h) \label{eq:ehehv1}\\
        &    =
 \mathbb{E}[\big(\mathcal{T}_{\infty}^\ell c_1(X_n)-\mathcal{T}_n^\ell c_2(X_n)
      \big)g_h^*(X_n)] + \mathbb{E}[\big(c_1(X_n) - c_2(X_n)\big) \Delta^{-\ell}g_h^*(X_n) ]
      + \kappa^{*\ell}_{c_2}(h)  \label{eq:ehehv2}  
  \end{align}
  with
  \begin{align*}     
&    \kappa_{\eta_2}^\ell(h):=\mathbb{E} \left[
    \mathcal{T}_n^{\ell}(\mathcal{L}_n^\ell\eta_2(\cdot)g_h(\cdot-\ell))(X_n)
  \right]  +
      (\mathbb{E}[\eta_2(X_n)]-\mathbb{E}[\eta_1(X_\infty)])
      \mathbb{E}[g_h(X_n)] \\
 & \kappa^{*\ell}_{c_2}(h):=\mathbb{E} \left[
    \mathcal{T}_n^{\ell}(c_2(\cdot)g_h^*(\cdot-\ell))(X_n) \right] 
\end{align*}
and where the choice of $c_1, c_2, \eta_1$ and $\eta_2$ are left free
up to validation of easily verified technical conditions.  If
$\mathcal{F}(\mathcal{A}_n^{\ell,\eta_2})$ contains $g_h$ then
$ \kappa_{\eta_2}^\ell(h)=0$. Similarly, if
$\mathcal{F}^*(\mathcal{A}_n^{\ell,c_2})$ contains $g_h^*$, then
$\kappa^{*\ell}_{c_2}(h) =0$. In all cases, if the approximation
problem is reasonable, these remainder terms should be small.
Particularizing to the choice $c_1 = c_2 = 1$ and
$\eta_1 = \eta_2 = - \mathrm{Id}$ (again, this is arbitrary and
alternative options are available, see Examples
\ref{sec:frechet-distribution} and \ref{ex:maxfre} in the
supplementary material), we obtain one of the main results of the
paper.
\begin{thm}\label{prop:EhEh}
  Suppose that $X_n \sim p_n$ and $X_{\infty} \sim p_\infty$ are
  absolutely continuous w.r.t.\ the same dominating measure. For all
  $h \in L^1(p_{\infty})\cap L^1(p_n)$ we have
\begin{align}
\mathbb{E}h(X_n) - \mathbb{E}h(X_{\infty}) 
&= \mathbb{E}\left[ \left( \rho^\ell_\infty(X_n) - \rho^\ell_n(X_n) \right) \mathcal{L}_\infty^\ell h(X_n+\ell)\right]+ \kappa_1^{*\ell}(h) \label{eq:EhEh1} 
\end{align}
with
\begin{equation*}
  \kappa_1^{\star \ell}(h)  = \mathbb{E} \left[
    \mathcal{T}_n^{\ell} \mathcal{L}_{\infty}^{\ell}h(X_n) \right].
\end{equation*}
Furthermore, if $\eta=\mathrm{Id}\in L^1(p_\infty)$, setting  
$\mu_n = \mathbb{E}[X_n]$ and $\mu_{\infty} = \mathbb{E}[X_{\infty}]$
we get 
\begin{align}
\mathbb{E}h(X_n) - \mathbb{E}h(X_{\infty}) 
&= \mathbb{E}\left[\big(\tau_n^\ell(X_n)-\tau_\infty^\ell(X_n) \big)
                                              \Delta^{-\ell}\left(\frac{-\mathcal{L}_\infty^\ell
                                              h(\cdot+\ell)}{\tau_\infty^\ell(\cdot+\ell)}
                                              \right)(X_n) \right]+ 
                                              \kappa_\mathrm{Id}^\ell(h)  \label{eq:EhEh2}
\end{align}
with
\begin{equation*} 
\kappa_\mathrm{Id}^\ell(h) = \mathbb{E} \left[
    \mathcal{T}_n^{\ell}\left(\frac{\tau_n^{\ell}(\cdot)}{\tau_\infty^{\ell}(\cdot)}
    \mathcal{L}_{\infty}^{\ell}h(\cdot)\right)(X_n)  \right] +
                                              (\mu_n - \mu_{\infty})
                                              \mathbb{E} \left[ \frac{-\mathcal{L}_\infty^\ell
                                              h(X_n+\ell)}{\tau_\infty^\ell(X_n+\ell)}
                                              \right]. 
\end{equation*}
\end{thm}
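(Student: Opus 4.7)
The plan is to derive both \eqref{eq:EhEh1} and \eqref{eq:EhEh2} as direct specializations of the general identities \eqref{eq:ehehv1} and \eqref{eq:ehehv2}, which were already established in the discussion preceding the theorem. No new estimates are needed: everything reduces to inserting specific choices of $\eta_i, c_i$ and then invoking Lemma \ref{lem:steinsolutttt} to write $g_h$ (resp.\ $g_h^*$) in closed form.

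For \eqref{eq:EhEh1}, I would start from \eqref{eq:ehehv2} and take $c_1 \equiv c_2 \equiv 1$. The definition of the score function immediately gives $\mathcal{T}_\infty^\ell 1 = \rho_\infty^\ell$ and $\mathcal{T}_n^\ell 1 = \rho_n^\ell$, while $c_1 - c_2 \equiv 0$ kills the second expectation in \eqref{eq:ehehv2}. Lemma \ref{lem:steinsolutttt} applied to the Stein equation \eqref{eq:stek} with $c=1$ delivers $g_h^*(x) = \mathcal{L}_\infty^\ell h(x+\ell)$. Substituting these three facts into \eqref{eq:ehehv2} produces \eqref{eq:EhEh1} together with the remainder $\kappa_1^{*\ell}(h) = \mathbb{E}[\mathcal{T}_n^\ell(1\cdot g_h^*(\cdot-\ell))(X_n)] = \mathbb{E}[\mathcal{T}_n^\ell \mathcal{L}_\infty^\ell h(X_n)]$, as stated.

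For \eqref{eq:EhEh2}, I would start from \eqref{eq:ehehv1} and take $\eta_1 = \eta_2 = \mathrm{Id}$. The first expectation vanishes because $\eta_1 - \eta_2 \equiv 0$. Using the convention $\tau_p^\ell = -\mathcal{L}_p^\ell \mathrm{Id}$, we get $\mathcal{L}_\infty^\ell \eta_1 - \mathcal{L}_n^\ell \eta_2 = -\tau_\infty^\ell + \tau_n^\ell$, so the second expectation becomes the integrand displayed in \eqref{eq:EhEh2}. The relevant $g_h$ solves \eqref{eq:stek1} with $\eta = \mathrm{Id}$, i.e.\ the standardization \eqref{eq:standop2} with $c = \mathcal{L}_\infty^\ell \mathrm{Id} = -\tau_\infty^\ell$; Lemma \ref{lem:steinsolutttt} then yields $g_h(x) = -\mathcal{L}_\infty^\ell h(x+\ell)/\tau_\infty^\ell(x+\ell)$, which is precisely the argument of $\Delta^{-\ell}$ inside \eqref{eq:EhEh2}. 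Finally, to identify $\kappa_\mathrm{Id}^\ell(h)$ one computes $\mathcal{L}_n^\ell \mathrm{Id}(x) g_h(x-\ell) = (-\tau_n^\ell(x))\bigl(-\mathcal{L}_\infty^\ell h(x)/\tau_\infty^\ell(x)\bigr) = \tau_n^\ell(x)\mathcal{L}_\infty^\ell h(x)/\tau_\infty^\ell(x)$ and inserts this, together with $\mathbb{E}[\eta_2(X_n)] - \mathbb{E}[\eta_1(X_\infty)] = \mu_n - \mu_\infty$, into the general expression for $\kappa_{\eta_2}^\ell(h)$.

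The work is essentially bookkeeping: the only place something can go wrong is tracking signs arising from the two conventions $\tau_p^\ell = -\mathcal{L}_p^\ell \mathrm{Id}$ and $c = \mathcal{L}_p^\ell \eta$ in \eqref{eq:standop2}, and verifying that the test functions $g_h$ and $g_h^*$ produced by these specializations lie in the Stein classes $\mathcal{F}(\mathcal{A}_n^{\ell,\eta_2})$, $\mathcal{F}^*(\mathcal{A}_n^{\ell,c_2})$ under the standing hypothesis $h \in L^1(p_n)\cap L^1(p_\infty)$. This integrability check is what forces the remainder terms $\kappa_1^{*\ell}(h)$ and $\kappa_\mathrm{Id}^\ell(h)$ to be carried along rather than being zero; everything else is routine substitution.
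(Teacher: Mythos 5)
Your proposal is correct and follows essentially the same route as the paper's own proof: specialize \eqref{eq:ehehv2} with $c_1=c_2=1$ and \eqref{eq:ehehv1} with $\eta_1=\eta_2=\mathrm{Id}$, then read off the solutions $g_h^*(x)=\mathcal{L}_\infty^\ell h(x+\ell)$ and $g_h(x)=-\mathcal{L}_\infty^\ell h(x+\ell)/\tau_\infty^\ell(x+\ell)$ from Lemma \ref{lem:steinsolutttt}. Your bookkeeping of the remainder terms $\kappa_1^{*\ell}(h)$ and $\kappa_{\mathrm{Id}}^\ell(h)$ is in fact more explicit than the paper's, which leaves those substitutions implicit.
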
   
Clearly expressions such as those in Theorem
\ref{sec:steins-dens-appr-1} and \ref{prop:EhEh} will only be useful
if the different functions involved are tractable. In the next section
and in the supplementary material we show that this is the case for
many important examples.  We now specialize Theorem \ref{prop:EhEh} to
various situations of interest, that is for Kolmogorov, Total
Variation and Wasserstein metrics; in particular, setting
$A_{n}^{\infty} =\left\{ x \, | \, p_n(x) \ge p_{\infty}(x) \right\}$
and
$h_{\mathrm{TV}}(x) = \mathbb{I}_{A_{n}^{\infty}}(x) -
\mathbb{I}_{(A_{n}^{\infty})^c}(x) = 2 \mathbb{I}_{A_{n}^{\infty}}(x)
-1$, we reap
    \begin{align*} 
      \mathrm{TV}(X_n, X_{\infty})  &= \sup_B \left| {P}_n( B) - {P}_{\infty}(X_B) \right|  = \frac{1}{2}\int \left| p_n(x) -
                                      p_{\infty}(x) \right|
                                      \mu(\mathrm{d}x) \nonumber\\
      %
&        = \frac{1}{2}(\mathbb{E} h_{\mathrm{TV}}(X_n) -
                                                                                           \mathbb{E}h_{\mathrm{TV}}(X_{\infty}))
= \mathbb{E}[ \mathbb{I}_{A_{n}^{\infty}}(X_n)]-\mathbb{E}[ \mathbb{I}_{A_{n}^{\infty}}(X_\infty)]
    \end{align*}
    (here and throughout we write
    $P(B) = \mathbb{E}[ \mathbb{I}_B(X)]$ if $X$ has cdf $P$).
    Although the set $A_{n}^{\infty}$ is intractable, this last
    rewriting allows to avoid having a supremum in our Stein
    discrepancy (we work with a single indicator function) and thus
    leads to improved bounds.

\begin{rmk}\label{rmk:compadisc}
  It is immediate to extend the scope of Theorem~\ref{prop:EhEh} to
  the comparison of \emph{any} arbitrary distributions without
  requiring that they share a common dominating measure. Such has
  already been attempted successfully in \cite{goldstein2013stein} and
  our notations would allow to perform similar operations in full
  generality. We present an outline of such a ``general'' bound as
  well as two simple applications (one towards extreme value
  distributions and one towards normal approximation) at the end of
  Section \ref{sec:bounds-ipms-1} of the suppelementary material.
\end{rmk}

\begin{cor}[Identity \eqref{eq:EhEh1}, score functions and $\ell = 0$]
  \label{cor:kolbound}
  Suppose that the laws of $X_n$ and $X_{\infty}$ are absolutely
  continuous with respect to the Lebesgue measure with densities $p_n$
  and $p_{\infty}$, respectively. Let $\mathcal{S}_n$ (resp.,
  $S_{\infty}$) be the support of $p_n$ (resp., $p_{\infty}$); also
  let 
  $b_n = \sup \mathcal{S}_n$ and
  $a_n = \inf \mathcal{S}_n $ (resp.,   $b_\infty = \sup \mathcal{S}_\infty$ and
  $a_\infty = \inf \mathcal{S}_\infty $).  Finally, let $\rho_n(x)$ and
  $\rho_\infty(x)$ be the scores and $\tau_n(x)$ and
  $\tau_{\infty}(x)$ be the Stein kernels of $p_n$ and $p_{\infty}$.
  \begin{enumerate}
  \item   The Kolmogorov distance
  between the random variables $X_n$ and $X_\infty$ is
\begin{align}
  \mathrm{Kol}(X_n,X_\infty)& = \sup_z \left| \mathbb{E}\left[ (\rho_\infty(X_n) -
    \rho_n(X_n)) \frac{P_\infty(X_n\wedge z)\bar P_\infty(X_n\vee
                              z)}{p_\infty(X_n)} \mathbb{I}_{\mathcal{S}_{\infty}}(X_n) \right]
    + \kappa_1^{\star}(z) \right|     
                              \label{eq:corkol}\\
  & \le  \mathbb{E}\left[ |\rho_\infty(X_n) -
    \rho_n(X_n)| \frac{P_\infty(X_n)\bar P_\infty(X_n)}{p_\infty(X_n)}\mathbb{I}_{\mathcal{S}_{\infty}}(X_n)
    \right] + \sup_z \kappa_1^{\star}(z)
\end{align}
where 
\begin{align*}
  \kappa_1^{\star}(z) = \lim_{x \nearrow b_n\wedge b_{\infty}}\frac{p_n(x)}{p_\infty(x)} P_\infty(x\wedge z)\bar P_\infty(x\vee
  z)- \lim_{x \searrow a_n\vee a_{\infty}}\frac{p_n(x)}{p_\infty(x)}P_\infty(x\wedge z)\bar P_\infty(x\vee
  z).
\end{align*}

\item  The Total
  Variation distance between $X_n$ and $X_\infty$ is
    \begin{align}
      &      \mathrm{TV}(X_n, X_{\infty})  \nonumber \\
      & 
        = \mathbb{E}\left[ \left( {\rho_\infty(X_n) - \rho_n(X_n)}
        \right)      \frac{ {P}_\infty(A_{n}^{\infty}\cap (-\infty,
      X_n])-{P}_\infty (A_{n}^{\infty})
                                                P_{\infty}(X_n)}{p_\infty(X_n)}\mathbb{I}_{\mathcal{S}_{\infty}}(X_n)\right] 
        + \kappa_1^{\star}(\mathbb{I}_{A_{n}^{\infty}}) \label{eq:tv1} \\
      &       \leq \mathbb{E}\left[ \left|{\rho_\infty(X_n) -
        \rho_n(X_n)} \right| \frac{P_\infty(X_n)\bar P_\infty(X_n)}{p_\infty(X_n)} \mathbb{I}_{\mathcal{S}_{\infty}}(X_n)\right]
        + \kappa_1^{\star}(\mathbb{I}_{A_{n}^{\infty}})
    \label{eq:tvbound1}
    \end{align}
    where
    $A_{n}^{\infty} =\left\{ x \, | \, p_n(x) \ge p_{\infty}(x)
    \right\}$, $X_1, X_2 \stackrel{\mathrm{iid}}{\sim} p_\infty$, and
    \begin{align*}
\kappa_1^{\star}(\mathbb{I}_{A_{n}^{\infty}}) & = \lim_{x \nearrow
                                                b_n\wedge b_{\infty}}\frac{p_n(x)}{p_\infty(x)}
      \left( {P}_\infty(A_{n}^{\infty}\cap (-\infty,
      x])-{P}_\infty (A_{n}^{\infty})
                                                P_{\infty}(x)\right)
                                                \nonumber \\
      & \quad  - \lim_{x \searrow a_n \vee a_{\infty}}\frac{p_n(x)}{p_\infty(x)}
      \left( {P}_\infty(A_{n}^{\infty}\cap (-\infty,
      x])-{P}_\infty (A_{n}^{\infty})  P_{\infty}(x)\right).
    \end{align*}

  \item The Wasserstein distance between $X_n$ and $X_\infty$ is
    \begin{align}
      \mathrm{Wass}(X_n, X_{\infty}) &  
      = \sup_{h \in \mathrm{Lip}(1)}  
      \left| \mathbb{E}\left[
    \left({\rho_n(X_n)-\rho_\infty(X_n)} \right)
    h'(X_\infty) \tilde{K}_\infty(X_\infty,
    X_n)\mathbb{I}_{\mathcal{S}_{\infty}}(X_n)\right] + \kappa_1^{\star}(h)
                                       \right| \label{eq:corwass1}\\
 & \le  \mathbb{E}\left[  \left|{\rho_n(X_n)-\rho_\infty(X_n)} \right|
   \tau_\infty(X_n) \mathbb{I}_{\mathcal{S}_{\infty}}(X_n) \right]+ \sup_{h \in \mathrm{Lip}(1)}\kappa_1^{\star}(h) \label{eq:corwass1bis}
    \end{align}
    where
    \begin{align*}
\kappa_1^{\star}(h)  &  = \lim_{x \searrow a_n \vee a_{\infty}}
      \frac{p_n(x)}{p_{\infty}(x)}  \int_{a_{\infty}}^{b_{\infty}} h'(u)
       {P_{\infty}(x \wedge u) \bar P_{\infty}(x \vee u) } \mathrm{d}u
                       \nonumber 
      \\
      & \quad  -  \lim_{x \nearrow b_n \wedge b_{\infty}} \frac{p_n(x)}{p_{\infty}(x)}  \int_{a_{\infty}}^{b_{\infty}}h'(u)
       {P_{\infty}(x \wedge u) \bar P_{\infty}(x \vee u) } \mathrm{d}u
    \end{align*}
  \end{enumerate}
\end{cor}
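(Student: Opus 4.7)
My plan is to derive Corollary~\ref{cor:kolbound} as a direct specialization of identity \eqref{eq:EhEh1} of Theorem~\ref{prop:EhEh} in the continuous case $\ell=0$, with three different choices of test function $h$. The main work is to compute $\mathcal{L}_\infty^0 h$ explicitly in each case and then to evaluate the remainder $\kappa_1^{\star 0}(h)=\mathbb{E}[\mathcal{T}_n^0\mathcal{L}_\infty^0 h(X_n)]$ as a boundary term.

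For the Kolmogorov statement I would pick $h_z(x)=\mathbb{I}[x\le z]$. Lemma~\ref{lem:halflinein} with $c\equiv1$ immediately gives $\mathcal{L}_\infty^0 h_z(x)=P_\infty(x\wedge z)\bar P_\infty(x\vee z)/p_\infty(x)$, and the convention that this vanishes outside $\mathcal{S}_\infty$ is exactly what produces the factor $\mathbb{I}_{\mathcal{S}_\infty}(X_n)$. Substituting into \eqref{eq:EhEh1} and taking the supremum over $z$ gives \eqref{eq:corkol}; the consequence follows from the pointwise domination $P_\infty(x\wedge z)\bar P_\infty(x\vee z)\le P_\infty(x)\bar P_\infty(x)$, which is uniform in $z$ and so commutes with the supremum. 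The Total Variation case I would handle almost identically after invoking the preliminary reduction $\mathrm{TV}(X_n,X_\infty)=\mathbb{E}[\mathbb{I}_{A_n^\infty}(X_n)]-\mathbb{E}[\mathbb{I}_{A_n^\infty}(X_\infty)]$ already established in the text; a direct computation from \eqref{eq:caninv} yields $\mathcal{L}_\infty^0\mathbb{I}_{A_n^\infty}(x)=(P_\infty(A_n^\infty\cap(-\infty,x])-P_\infty(A_n^\infty)P_\infty(x))/p_\infty(x)$, and the bound $|P_\infty(A_n^\infty\cap(-\infty,x])-P_\infty(A_n^\infty)P_\infty(x)|\le P_\infty(x)\bar P_\infty(x)$ (itself a covariance inequality for indicator events) then closes out \eqref{eq:tv1}--\eqref{eq:tvbound1}.

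For Wasserstein I would instead invoke the alternative representation \eqref{eq:rep_caninv}, which for $\ell=0$ reads $\mathcal{L}_\infty^0 h(x)=-\mathbb{E}[\tilde K_\infty^0(x,X_\infty)\,h'(X_\infty)]$ and is valid for any Lipschitz $h$. Substitution into \eqref{eq:EhEh1} yields \eqref{eq:corwass1}, with the sign flip absorbed into $\rho_n-\rho_\infty$; the bound \eqref{eq:corwass1bis} then follows from $|h'|\le 1$ together with the identity $\mathbb{E}[\tilde K_\infty^0(x,X_\infty)]=\tau_\infty(x)$ recorded in Remark~\ref{rmk:expK}.

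The only non-routine step, and the one deserving real care, is the identification of $\kappa_1^{\star 0}(h)$. Here I would write $\mathcal{T}_n^0 f(x)\,p_n(x)=(f(x)p_n(x))'$, so that $\mathbb{E}[\mathcal{T}_n^0 f(X_n)]$ is the integral of a total derivative over $\mathcal{S}_n$; because $\mathcal{L}_\infty^0 h$ is set to zero off $\mathcal{S}_\infty$, this collapses to the difference of the boundary values of $f(x)p_n(x)=(p_n(x)/p_\infty(x))\,(\mathcal{L}_\infty^0 h(x)\,p_\infty(x))$ at the endpoints $x\to b_n\wedge b_\infty$ and $x\to a_n\vee a_\infty$. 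Plugging in the three explicit forms of $\mathcal{L}_\infty^0 h$ computed above produces the three displayed expressions for $\kappa_1^{\star}(z)$, $\kappa_1^{\star}(\mathbb{I}_{A_n^\infty})$ and $\kappa_1^{\star}(h)$. The main technical obstacle is thus justifying the integration by parts at the interior boundary of $\mathcal{S}_n\cap\mathcal{S}_\infty$: one must check that $x\mapsto\mathcal{L}_\infty^0 h(x)\,p_n(x)$ is absolutely continuous there and that the one-sided limits at the endpoints actually exist, which follows from Assumption~A and the integrability conditions built into the canonical pseudo-inverse.
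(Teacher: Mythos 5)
Your proposal is correct and follows exactly the route the paper intends (and leaves implicit): specialize identity \eqref{eq:EhEh1} of Theorem \ref{prop:EhEh} with $\ell=0$ to $h=\mathbb{I}[\cdot\le z]$, $h=\mathbb{I}_{A_n^{\infty}}$ and $h\in\mathrm{Lip}(1)$, compute $\mathcal{L}_\infty^0 h$ via Lemma \ref{lem:halflinein}, the definition \eqref{eq:caninv} and the representation \eqref{eq:rep_caninv} respectively, and identify $\kappa_1^{\star}(h)=\mathbb{E}[\mathcal{T}_n^0\mathcal{L}_\infty^0 h(X_n)]$ as the boundary term of the total derivative $(\mathcal{L}_\infty^0 h\cdot p_n)'$ over $\mathcal{S}_n\cap\mathcal{S}_\infty$. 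Your justifications of the pointwise dominations (the monotonicity giving $P_\infty(x\wedge z)\bar P_\infty(x\vee z)\le P_\infty(x)\bar P_\infty(x)$, the indicator-covariance bound for the TV case, and $\mathbb{E}[\tilde K_\infty(x,X_\infty)]=\tau_\infty(x)$ with $|h'|\le1$ for Wasserstein) are all sound.
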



\begin{rmk}[Distances between nested distributions]
 
  Inspired by \cite{ley2017distances} we know that it is of interest
  to consider situations where
  $\mathcal{S}(p_n) \subseteq \mathcal{S}(p_{\infty})$. Then, setting
  $\pi_0(x) = p_n(x)/p_{\infty}(x)$ and
  $\rho_0^{\ell}(x) = \Delta^{\ell}p_0(x)/p_0(x)$ we get
  \begin{equation*}
    \rho_n(x) - \rho_{\infty}(x) = \frac{p_{\infty}(x+\ell)}{p_{\infty}(x)}
  \rho_0^{\ell}(x)
  \end{equation*}
 for all $x \in \mathcal{S}_n(x)$. If $\ell = 0$ then 
  $\frac{ p_{\infty}(X_n+\ell)}{p_{\infty}(X_n)} = 1$.
\end{rmk}

\begin{cor}[Identity \eqref{eq:EhEh1}, score functions, $\ell =
  \pm1$] \label{cor:prec} Suppose that the laws of $X_n$ and
  $X_{\infty}$ are discrete with mass functions $p_n$ and
  $p_{\infty}$, respectively. Let
  $b_n = \sup \mathcal{S}(p_n) \le b_{\infty} = \sup
  \mathcal{S}(p_\infty) $ and
  $a_n = \inf \mathcal{S}(p_n) \ge a_\infty = \inf
  \mathcal{S}(p_\infty)$.  Finally, let $\rho_n^{\ell}(x)$ and
  $\rho_\infty^{\ell}(x)$ be the scores and $\tau_n^{\ell}(x)$ and
  $\tau_{\infty}^{\ell}(x)$ be the Stein kernels of $p_n$ and
  $p_{\infty}$. The following results hold true.
  \begin{align*}
       &      \mathrm{TV}(X_n, X_{\infty})  \nonumber \\
      & 
        = \mathbb{E}\left[ \left( {\rho_\infty^{\ell}(X_n) - \rho_n^{\ell}(X_n)}
        \right)  \mathbb{I}_{\mathcal{S}_{\infty}}(X_n+\ell) 
        \frac{ {P}_\infty(A_{n}^{\infty}\cap (-\infty,
      X_n-b_{\ell}])-{P}_\infty (A_{n}^{\infty})
                                                P_{\infty}(X_n-b_{\ell})}{p_\infty(X_n+\ell)}
        \right] 
        + \kappa_1^{\star\ell}(\mathbb{I}_{A_{n}^{\infty}})
        \\
      &       \leq \mathbb{E}\left[ \left|{\rho_\infty^{\ell}(X_n) -
        \rho_n^{\ell}(X_n)} \right| \frac{P_\infty(X_n-b_{\ell})\bar
        P_\infty(X_n-b_{\ell})}{p_\infty(X_n+\ell)}\mathbb{I}_{\mathcal{S}_{\infty}}(X_n+\ell) \right] 
        + \kappa_1^{\star\ell}(\mathbb{I}_{A_{n}^{\infty}})
  \end{align*}
  with
\begin{align*}
  \kappa_1^{\star+}(\mathbb{I}_{A_{n}^{\infty}}) & = - \lim_{x
                                                   \searrow a_n \vee a_{\infty}}\frac{p_n(x)}{p_\infty(x)}
                                                      \left( {P}_\infty(A_{n}^{\infty}\cap (-\infty,
                                                      x-1])-{P}_\infty
                                                      (A_{n}^{\infty})
                                                      P_{\infty}(x-1)\right)\\
    \kappa_1^{\star-}(\mathbb{I}_{A_{n}^{\infty}}) & = \lim_{x
                                                     \nearrow
                                                     b_n\wedge b_{\infty}}\frac{p_n(x)}{p_\infty(x)}
                                                      \left( {P}_\infty(A_{n}^{\infty}\cap (-\infty,
                                                      x])-{P}_\infty (A_{n}^{\infty})
                                                      P_{\infty}(x)\right)
                                                    \end{align*}

\end{cor}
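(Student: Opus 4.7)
The plan is to mirror the proof of Corollary~\ref{cor:kolbound}(2) in the discrete setting, starting from identity~\eqref{eq:EhEh1} of Theorem~\ref{prop:EhEh}. First I would use the representation
\begin{equation*}
\mathrm{TV}(X_n, X_\infty) = \mathbb{E}[\mathbb{I}_{A_n^\infty}(X_n)] - \mathbb{E}[\mathbb{I}_{A_n^\infty}(X_\infty)]
\end{equation*}
derived just above Corollary~\ref{cor:kolbound}, and apply~\eqref{eq:EhEh1} with $h = \mathbb{I}_{A_n^\infty}$. The goal is then to make the two resulting pieces --- the main expectation involving $\mathcal{L}_\infty^\ell h(X_n+\ell)$ and the remainder $\kappa_1^{*\ell}(h)$ --- completely explicit for indicator test functions.

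Next I would plug $h = \mathbb{I}_{A_n^\infty}$ into the defining formula~\eqref{eq:caninv} for $\mathcal{L}_\infty^\ell$ (equivalently, use Remark~\ref{rmk:borel}), obtaining
\begin{equation*}
\mathcal{L}_\infty^\ell \mathbb{I}_A(y) = \frac{1}{p_\infty(y)}\bigl[P_\infty(A\cap(-\infty,y-a_\ell]) - P_\infty(A)P_\infty(y-a_\ell)\bigr]\mathbb{I}_{\mathcal{S}_\infty}(y),
\end{equation*}
and evaluating at $y=X_n+\ell$. The elementary identity $\ell - a_\ell = -b_\ell$ (valid for both $\ell=\pm 1$) unifies the two cases and produces the argument $X_n - b_\ell$ appearing in the statement; the factor $\mathbb{I}_{\mathcal{S}_\infty}(X_n+\ell)$ encodes the convention that $\mathcal{L}_\infty^\ell$ vanishes outside the support of $p_\infty$.

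The third step is to identify the remainder. By definition $\kappa_1^{*\ell}(h) = \mathbb{E}[\mathcal{T}_n^\ell \mathcal{L}_\infty^\ell h(X_n)]$, which in the discrete case equals $\sum_{x\in\mathcal{S}_n}\Delta^\ell\bigl(\mathcal{L}_\infty^\ell h(x)\,p_n(x)\bigr)$. This is a telescoping sum: for $\ell=+1$ it collapses to $-\mathcal{L}_\infty^+ h(a_n)\,p_n(a_n)$ (using $p_n(b_n+1)=0$), and for $\ell=-1$ to $+\mathcal{L}_\infty^- h(b_n)\,p_n(b_n)$. Substituting the explicit form of $\mathcal{L}_\infty^\ell \mathbb{I}_{A_n^\infty}$ at these boundary points yields precisely the stated $\kappa_1^{*\pm}(\mathbb{I}_{A_n^\infty})$; the limit notation is used to accommodate the case $a_n > a_\infty$ or $b_n < b_\infty$, where the ratio $p_n/p_\infty$ must be interpreted at the interior endpoint of $\mathcal{S}_n$.

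Finally, the upper bound follows from the triangle inequality applied to the explicit identity, combined with the pointwise estimate
\begin{equation*}
\bigl|P_\infty(A_n^\infty\cap(-\infty, y]) - P_\infty(A_n^\infty)\,P_\infty(y)\bigr| \le P_\infty(y)\bar P_\infty(y),
\end{equation*}
which is the covariance bound $|\mathrm{Cov}(\mathbb{I}_{A_n^\infty}(X_\infty),\mathbb{I}_{(-\infty,y]}(X_\infty))| \le P_\infty(y)\bar P_\infty(y)$ and is verified by a short case split on whether $P_\infty(A_n^\infty)+P_\infty(y) \lessgtr 1$. The main obstacle I anticipate is bookkeeping at the boundary: one must justify the telescoping carefully when $\mathcal{S}_n \subsetneq \mathcal{S}_\infty$, since $\mathcal{L}_\infty^\ell h(x)$ is evaluated at points in $\mathcal{S}_n$ that need not be boundary points of $\mathcal{S}_\infty$, and this is what forces the two slightly asymmetric expressions for $\kappa_1^{*+}$ and $\kappa_1^{*-}$ rather than a single unified formula as in the continuous case.
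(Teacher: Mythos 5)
Your proposal is correct and follows exactly the route the paper intends (the paper omits the proof, deriving the corollary directly from identity \eqref{eq:EhEh1} with $h=\mathbb{I}_{A_n^\infty}$): the explicit evaluation of $\mathcal{L}_\infty^\ell\mathbb{I}_A(X_n+\ell)$ via \eqref{eq:caninv} with $\ell-a_\ell=-b_\ell$, the telescoping identification of $\kappa_1^{\star\pm}$ at the endpoint $a_n$ (resp.\ $b_n$), and the covariance bound $|P_\infty(A\cap(-\infty,y])-P_\infty(A)P_\infty(y)|\le P_\infty(y)\bar P_\infty(y)$ are precisely the computations needed. No gaps.
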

 
It is not hard to obtain bounds on Total Variation, Kolmogorov and
Wasserstein by starting from identity \eqref{eq:EhEh2} through Stein
kernels. It is also well-documented that such bounds are, in many
cases, useful; we refer e.g.\ to Nourdin and Peccati's important
Malliavin Stein method (\cite{nourdin2012normal}) for applications of
the corresponding bounds in the standard normal case. However, in our
applications we have not found situations where such bounds perform
better than the corresponding ones from the above corollaries. Since
we found it quite cumbersome to obtain the complete statements and we
believe that such results may one day serve the community, we relegate
their statement to Appendix \ref{sec:some-more-ineq-1}.

\begin{exm}[Standard normal target]
  \label{sec:stand-norm-targ}
Let
  $X_{\infty} \sim \mathcal{N}(0, 1)$ and consider the notation of
  example \ref{ex:norm}. The classical Stein discrepancy between any random
  variable  $X_n$ and $X_{\infty}$ in this case is
  \begin{equation}
    \label{eq:distZ}
    \sup_{h \in \mathcal{H}} \left| \mathbb{E} \left[ g_h'(X_n) - X_n g_h(X_n)   \right] \right|
  \end{equation}
  with $g_h = \mathcal{L}_{\infty}h$ the unique bounded solution to
  the Stein equation
  $ g_h'(x) - x g(x) = h(x) - \mathbb{E}h(X_{\infty})$.  Applications
  of \eqref{eq:distZ} are extremely well documented. To illustrate the
  power of our approach, let $X_n$ be a continuous real random
  variable.  By Corollaries \ref{cor:kolbound} and \ref{cor:kolbound2}
  the following bounds hold.
 
\begin{itemize}
\item  \emph{Kolmogorov distance}

  Direct computations from \eqref{eq:corkol} yield 
\begin{align*}
 \mathrm{Kol}(X_n,X_\infty)& = \sup_z \left| \mathbb{E}\left[ (X_n +
    \rho_n(X_n)) \frac{\Phi(X_n\wedge z)\bar \Phi(X_n\vee z)}{\varphi(X_n)} \right]
                             -
                             \kappa_1^{\star}(z) \right|    \\
    & \le  
      \mathbb{E}\left[ |X_n+\rho_n(X_n)| \frac{\Phi(X_n)\bar \Phi(X_n)}{\varphi(X_n)}
    \right] + \sup_z \kappa_1^{\star}(z) \\
    & \le \frac{1}{2}\sqrt{\frac{\pi}{2}} \mathbb{E}\left[ |X_n+\rho_n(X_n)| 
    \right] + \sup_z \kappa_1^{\star}(z) 
    \end{align*}
and, from \eqref{eq:corkol2}, 
    \begin{align*}
 \mathrm{Kol}(X_n,X_\infty)   
&     = \sup_z \left| \mathbb{E}\Bigg[ (\tau_n(X_n) - 1)
\Bigg( \Phi(z) - \mathbb{I}[X_n \le z]+ X_n
 \frac{\Phi(X_n \wedge z)  \bar \Phi(X_n\vee z)}{\varphi(X_n)} \Bigg) \Bigg]   
    + \kappa_{\mathrm{Id}}(z) \right|   \\
   & \le  
   \mathbb{E}\left[ \left| \tau_n(X_n)-1 \right|
\left( 1 + |X_n|
    \frac{\Phi(X_n)\bar \Phi(X_n)}{\varphi(X_n)} \right)
    \right] + \sup_z |\kappa_{\mathrm{Id}}(z)|  \\
       & \le  
   2\mathbb{E}\left[ \left| \tau_n(X_n)-1 \right|
    \right] + \sup_z |\kappa_{\mathrm{Id}}(z)|
\end{align*}     

For instance, if $X_n\sim t_n$ is Student with $n$ degrees of liberty,
then $\kappa_1^{\star}(z) = \kappa _{\mathrm{Id}}(z) =0$ for all $z$,
$\rho_n=-(1 + n) x/(n + x^2)$ and $\tau_n(x)=(x^2+n)/(n-1)$ (see e.g.\
Table 3 in the supplementary material to \cite{ernst2019first}) we
obtain
\begin{align}
  \mathrm{Kol}(X_n, X_{\infty}) 
   & \le  
      \mathbb{E}\left[ |X_n| \left|\frac{X_n^{2}-1}{X_n^2+n}   \right| \frac{\Phi(X_n)\bar \Phi(X_n)}{\varphi(X_n)}
    \right] \label{eq:kolstgauss}\\
    & \le \frac{1}{2}\sqrt{\frac{\pi}{2}} \mathbb{E}\left[ |X_n|\left|\frac{X_n^{2}-1}{X_n^2+n}   \right|
      \right]  \le \frac{2/\sqrt{e} -1/2}{n-1} \approx
      \frac{0.7130}{n-1} \nonumber 
\end{align}
(we use
${\Phi(x)\big(1-\Phi(x)\big)}/{\phi(x)} \le
{\Phi(0)\big(1-\Phi(0)\big)}/{\phi(0)} = 1/2\sqrt{\pi/2} \approx
0.626$) and
\begin{align*}
  \mathrm{Kol}(X_n, X_{\infty}) 
   & \le  
      \mathbb{E}\left[  \frac{X_n^2+1}{n-1}  
\left( 1 + |X_n|
    \frac{\Phi(X_n)\bar \Phi(X_n)}{\varphi(X_n)} \right)
    \right]  \le 2 \mathbb{E}\left[ \frac{X_n^2+1}{n-1}
      \right]   = \frac{2}{n-2}.
\end{align*}
Both our bounds improve e.g.\ on \cite[Example 1,
p1614]{cacoullos1994variational} but do (of course) not improve on the
optimal bound of Pinelis \cite[Theorem 1.2]{pinelis2015exact} which is
of order $0.158/n$.

\item \emph{Total variation distance.}

  Our upper bounds \eqref{eq:tv1} and \eqref{eq:cortv2} on Total
  Variation distance are the same as those for the Kolmogorov distance
  reported above. We can compare these bounds directly with
  \cite[Lemma 9]{duembgen2019bounding} who obtain the elegant bound
  $\mathrm{TV}(X_n, X_{\infty}) \le 2/n$ in this case.  Our rough
  upper bounds are not competitive. We could also use known results on
  Mill's ratio (such as e.g.\ in \cite[Theorem 2.3]{baricz2008mills}'s
  bound
  $ \frac{\Phi(x)\big(1-\Phi(x)\big)}{\phi(x)} \le
  \frac{4}{\sqrt{x^2+8}+3|x|}$) to hope for more explicit
  results. This does not, however, seem to lead easily to more
  explicit bounds and we'd rather not focus on this issue at the time
  being. Hence we content ourselves with numerical evaluations of
  \eqref{eq:kolstgauss} which in this case show that our non uniform
  bound is a (slight) improvement on \cite[Lemma
  9]{duembgen2019bounding}, see Figure \ref{fig:sfig1}. It would of
  course be interesting to obtain a formal proof of this result.


\item \emph{Wasserstein distance.}

  Direct computations from \eqref{eq:corwass1} yield 
 \begin{align*}
   \mathrm{Wass}(X_n, X_{\infty}) &  
                                    = \sup_{h \in \mathrm{Lip}(1)}  
                                    \left| \mathbb{E}\left[ \left({\rho_n(X_n) +} X_n \right)
                                    h'(X_\infty)
                                    \tilde{K}_\varphi(X_\infty,
                                    X_n)\right] + \kappa_1^{\star}(h)\right|\\
                                  & \leq \mathbb{E}\left[  \left|
                                    {\rho_n(X_n)+}X_n \right| \right]
                                    + \sup_{h \in \mathrm{Lip}(1) }
                                    |\kappa_1^{\star}(h)|. 
    \end{align*}  
In the particular case of Student $t$ vs standard normal, we
    obtain 
    \begin{align*}
\mathrm{Wass}(X_n, X_{\infty})  \le
      \E\left[\left|X_n\frac{1-X_n^2}{n+X_n^2}\right| \right]  \le
      \frac{3}{\sqrt{2\pi}} \frac{1}{\sqrt{n-1}}. 
    \end{align*}
    The bounds obtained from \eqref{eq:corwas2} are of the same order
    and not reported here. 
\end{itemize}
 \begin{figure}
   \centering
   \begin{subfigure}{.4\textwidth}
  \centering
\includegraphics[width=1\linewidth]{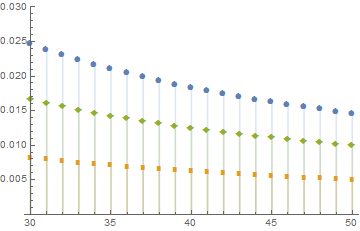}
  \caption{}
\label{fig:sfig1}
\end{subfigure}%
\quad 
\begin{subfigure}{.4\textwidth}
  \centering
  \includegraphics[width=1\linewidth]{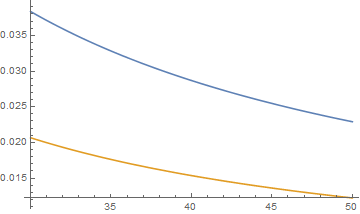}
  \caption{}
  \label{fig:sfig2}
\end{subfigure}
\caption{ \small{ Figure \ref{fig:sfig1} reports bounds on the
    total variation distance between $t_n$ and $\mathcal{N}(0, 1)$ for
    $n \in [30, 50]$: $2/n$ (green curve), our bound
    $({2/\sqrt{e} -1/2})/({n-1})$ (blue curve) and numerical
    evaluation of bound \eqref{eq:kolstgauss} (orange curve). Figure
    \ref{fig:sfig2} provides our upper bound on the Wasserstein
    distance (blue curve) as well as the exact value of the
    Wasserstein distance (computed with the formula
    $\mathrm{Wass}(X_n, X_{\infty}) = \int_{-\infty}^{\infty} \left|
      P_n(z) - P_{\infty}(z) \right| \mathrm{d}z$) for the same model
    and range of $n$.
 \label{fig:figbound}  }
}
\end{figure} 
\end{exm}

\begin{exm}[{Beta vs gamma}]
 \label{sec:beta-vs-gamma}

 Let $X_B \sim \mathrm{Beta}(\alpha, \beta)$ with density $p_B(x) =
 x^{\alpha-1} (1-x)^{\beta-1}/B(\alpha, \beta) \mathbb{I}_{[0, 1]}(x)$
 and cdf $P_{B}$; also let  $X_G \sim
 \Gamma(r, s)$ with density $p_{G}(x)  = x^{r-1} s^{r}
 e^{-s x}/\Gamma(r) \mathbb{I}_{[0, \infty)}(x)$ and cdf $P_{G}$. Simple computations
 yield (see also Table 3 in \cite{ernst2019first}) the scores and
 Stein kernels: 
 \begin{align*}
   &\rho_{B}(x) = \frac{1-\alpha+ x(\alpha +
   \beta-2)}{x(x-1)} \mbox{ and } \tau_{B}(x) =
   \frac{x(1-x)}{\alpha+\beta}\\
   & \rho_{G}(x) = \frac{r-1}{x} - s\mbox{ and }
     \tau_{G} = \frac{x}{s}.
 \end{align*}
 In order to facilitate 
 comparison with \cite{duembgen2019bounding}, we consider the
 same parameter settings as in that paper, namely $r = \alpha$ and
 $\beta>1$. Then
 \begin{align*}
   \rho_{B}(x)-\rho_{G}(x)=
   s + \frac{\beta-1}{x-1} \mbox{ and }
   \tau_{B}(x) - \tau_{G}(x) = x \left( \frac{1-x}{\alpha+\beta} -\frac{1}{s} \right).
 \end{align*}
We apply Corollary \ref{eq:EhEh1} to obtain 
\begin{align}\label{eq:tvbg}
 \mathrm{TV}(X_B, X_G)  
&  \leq \mathbb{E}\left[ \left|  s + \frac{\beta-1}{X_B-1}  \right|
                           \frac{P_G(X_B)\bar P_G(X_B)}{p_G(X_B)} 
                           \right] 
\end{align}
(here we use $\Gamma(\alpha, s)$ as target, i.e.\  $X_B = X_n$ and
   $X_G = X_{\infty}$; 
   $\kappa_1^{\star}(\mathbb{I}_{A_n^{\infty}}) = 0$) and
   \begin{align}\label{eq:tvgb2}
 \mathrm{TV}(X_G, X_B)  
&  \leq \mathbb{E}\left[ \left|  s + \frac{\beta-1}{X_G-1}  \right|
                           \frac{P_B(X_G)\bar P_B(X_G)}{p_B(X_G)}
                           \mathbb{I}[X_G \in [0, 1]]
                           \right] 
   \end{align}
   (here we use $\mathrm{Beta}(\alpha, \beta)$ as target, i.e.\
   $X_B= X_{\infty}$ and $X_G = X_n$;
   $\kappa_1^{\star}(\mathbb{I}_{A_n^{\infty}}) = 0$).  Numerical
   evaluations show that our bounds seem to outperform those
   \cite{duembgen2019bounding} (see Figure \ref{fig:betagamma}). More
   effort needs to be put in the study of the behavior of the ratio
   $P_{\infty}(x)\bar P_{\infty}(x) / p_{\infty}(x)$.  We do not
   report the corresponding bounds on the total variation distance
   that can be obtained from Corollary \ref{eq:EhEh2}; we do not
   either compute the bounds on Kolmogorov or Wasserstein distance.
   
 \begin{figure}[h]
\centering\begin{subfigure}{.4\textwidth}
  \centering
\includegraphics[width=1\linewidth]{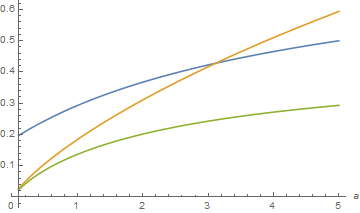}
\caption{}
 \label{fig:betagammaa}
\end{subfigure}%
\quad 
\begin{subfigure}{.4\textwidth}
  \centering
  \includegraphics[width=1\linewidth]{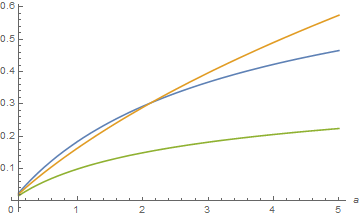}
\caption{}
 \label{fig:betagammab}
\end{subfigure}
\caption{ \small{Bounds on
    $\mathrm{TV}(X_B,X_G)$ obtained through  \eqref{eq:tvbg}
    (orange curve),   \eqref{eq:tvgb2}
    (green curve) and   \cite{duembgen2019bounding}
    (blue curve), with $X_B \sim \mathrm{Beta}(a, 3)$ vs $X_G \sim
    \Gamma(a, a+3)$ (Figure \ref{fig:betagammaa})  and $X_B \sim \mathrm{Beta}(a, 3)$ vs $X_G \sim
    \Gamma(a, a+2)$  (Figure \ref{fig:betagammab}).
 \label{fig:betagamma}}
}
\end{figure} 
 
\end{exm}

\begin{exm}[{Poisson target}]
\label{sec:poisson-target}

Let
  $X_{\infty} \sim \mathrm{Pois}(\lambda)$ and consider the notation of
  example \ref{ex:pois}. The classical Stein discrepancy between any random
  variable  $X_n$ and $X_{\infty}$ in this case is
  \begin{equation}
    \label{eq:12RR}
    \sup_{h \in \mathcal{H}} \left| \mathbb{E} \left[ \lambda g_h(X_n+1) - X_n g_h(X_n)   \right] \right|
  \end{equation}
  with $g_h(x) = \mathcal{L}_{\infty}^-h(x-1)$ the unique bounded
  solution to the Stein equation
  $\lambda g_h(x+1) - x g(x) = h(x) - \mathbb{E}h(X_{\infty})$.
  Applications of \eqref{eq:12RR} are extremely well documented. To
  illustrate the power of our approach, let $X_n$ be a discrete real
  random variable with values in $\mathbb{N}$.  By Corollaries 
  \ref{cor:prec} and  \ref{cor:prec2}, we get that $
  \mathrm{TV}(X_n, X_{\infty}) $ is bounded from above by  the
  following four quantities:  
  \begin{align*}
B_1(\lambda, X_n)  &  = \mathbb{E}\left[ \left|{\frac{\lambda}{X_n+1} - 1 -
        \rho_n^{+}(X_n)} \right| \frac{P_\infty(X_n)\bar
        P_\infty(X_n)}{p_\infty(X_n+1)} \right] 
        + \kappa_1^{\star+}(\mathbb{I}_{A_{n}^{\infty}}) \\
  B_2(\lambda, X_n)     &   = \mathbb{E}\left[ \left|{1-\frac{X_n}{\lambda}  -
        \rho_n^{-}(X_n)} \right| \frac{P_\infty(X_n-1)\bar
        P_\infty(X_n-1)}{p_\infty(X_n-1)}\mathbb{I}[X_n>0] \right] 
        + \kappa_1^{\star-}(\mathbb{I}_{A_{n}^{\infty}}) \\
B_3(\lambda, X_n)    & =  \mathbb{E}\left[ \left| \frac{\tau_n^{+}(X_n)}{X_n}-1
    \right|
\left( 1 + \frac{|X_n
                              -\lambda|}{X_n+1}
    \frac{P_\infty(X_n)\bar P_\infty(X_n)}{p_\infty(X_n+1)} \right)
      \right] +
      \kappa_{\mathrm{Id}}^{+}(\mathbb{I}_{A_{n}^{\infty}})\\
      B_4(\lambda, X_n)  & =  \mathbb{E}\left[ \left| \frac{\tau_n^{-}(X_n)}{\lambda}-1
    \right|
\left( 1 + \frac{|X_n
                              -\lambda|}{\lambda}
    \frac{P_\infty(X_n-1)\bar P_\infty(X_n-1)}{p_\infty(X_n-1)} \right)\mathbb{I}[X_n>0]
    \right] + \kappa_{\mathrm{Id}}^{-}(\mathbb{I}_{A_{n}^{\infty}})             
  \end{align*}

  We illustrate the bounds on some easy examples.
  \begin{exm}[Poisson vs Poisson]
    If $X_n \sim \mathrm{Pois}(\lambda_n)$ then
    $\kappa_1^{\star+}(\mathbb{I}_{A_{n}^{\infty}}) =
    \kappa_1^{\star-}(\mathbb{I}_{A_{n}^{\infty}}) = 0$  so that
    \begin{align*}     
      B_1(\lambda, \lambda_n) & = |\lambda - \lambda_n| \mathbb{E} \left[
            \frac{1}{X_n+1}   \frac{P_\infty(X_n)\bar
        P_\infty(X_n)}{p_\infty(X_n+1)}\right] \le |\lambda -
            \lambda_n| \frac{{\lambda}}{\lambda_n} \\
      B_2(\lambda, \lambda_n) & = \left| \frac{1}{\lambda} - \frac{1}{\lambda_n}
            \right|
            \mathbb{E}\left[ X_n\frac{P_\infty(X_n-1)\bar
        P_\infty(X_n-1)}{p_\infty(X_n-1)}\mathbb{I}[X_n>0] \right]
               \le |\lambda - \lambda_n|.  
    \end{align*}
    Similar arguments apply for $B_3$ and $B_4$ yielding similar
    results that are not reported here (although it is interesting to
    note that the first term in $B_3$ cancels out, and the only non
    zero term arises through non equality of the means).  \end{exm}

\begin{exm}[Poisson vs binomial]
  If $X_n \sim \mathrm{Bin}(n, \theta)$ and $\lambda = n \theta$ then
  $\kappa_1^{\star+}(\mathbb{I}_{A_{n}^{\infty}}) = 0 $ and 
  $\kappa_1^{\star-}(\mathbb{I}_{A_{n}^{\infty}}) \le \sqrt{2\pi}
  n^{1/2} e^{-n(1-\theta)}$ which is negligible for all values of
  $\theta \in (0, 1)$. Moreover 
  \begin{align*}
    \rho_n^+(x) = \frac{\theta}{1-\theta} \frac{n-x}{x+1}-1 \mbox{ and }
    \rho_n^-(x) =  1- \frac{1-\theta}{\theta} \frac{x}{n-x+1}
  \end{align*}
  so that
  \begin{align*}
    B_1(\lambda, n, \theta) & = \mathbb{E} \left[  \frac{\theta}{1-\theta} \frac{|X_n-n
    \theta|}{X_n+1}  \frac{P_\infty(X_n)\bar
    P_\infty(X_n)}{p_\infty(X_n+1)} \ \right]  \\
 B_2(\lambda, n, \theta) & = \mathbb{E} \left[ X_n
       \frac{|X_n-1-n\theta|}{n\theta(n-X_n+1)}
       \frac{P_\infty(X_n-1)\bar 
    P_\infty(X_n-1)}{p_\infty(X_n-1)} \mathbb{I}[X_n>0] \right] + \kappa_1^{\star-}(\mathbb{I}_{A_{n}^{\infty}}) \
  \end{align*}
  We can also exchange the roles of $p_n$ and $p_{\infty}$ and compute
  the same bounds with respect to the Poisson target. Numerical
  evaluations are reported in Figure \ref{fig:binpoi}.

\begin{figure}
  \centering
\begin{subfigure}{.4\textwidth}
  \centering
  \includegraphics[width=1\textwidth]{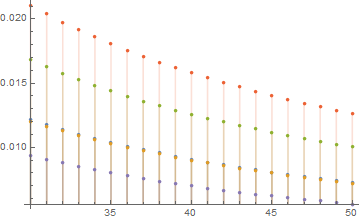}
  \caption{}
  \label{fig:poisBinlambda1}
\end{subfigure}%
\quad
\begin{subfigure}{.4\textwidth}
  \centering
    \includegraphics[width=1\textwidth]{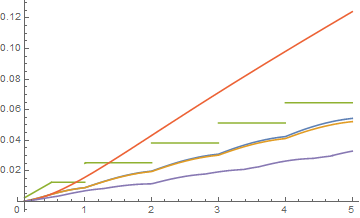}
  \caption{}
  \label{fig:DpoisBinn40}
\end{subfigure}%
\caption{\small Exact value of
  $\mathrm{TV}(\mathrm{Bin}(n, \lambda/n), \mathrm{Pois}(\lambda))$
  (purple curve), bound $B_1(\lambda, n, \lambda/n)$ (blue curve), the
  same bound when the roles of $X_n$ and $X_{\infty}$ are reversed
  (orange curve), the bound
  $\min(\lambda/n, 1-\sqrt{1-\lceil{\lambda}\rceil/n})$ from
  \cite{duembgen2019bounding} (green curve) and Chen's classical bound
  $\lambda(1- e^{-\lambda})/n$ from \cite{chen1975poisson} (red
  curve). Left plot for $\lambda = 1$ and $n \in [30, 50]$; right plot
  for $n=40$ and $\lambda \in (0, 5)$}
  \label{fig:binpoi}
\end{figure}

\end{exm}
\end{exm}
More examples and applications are detailed in the supplementary
material. 

\section*{Acknowledgements}

The research of YS was partially supported by the Fonds de la
Recherche Scientifique -- FNRS under Grant no F.4539.16.  ME
acknowledges partial funding via a Welcome Grant of the Universit\'e
de Li\`ege. YS thanks C\'eline Esser for many fruitful discussions. ME
and YS thank Robert Gaunt for sharing with us a nice problem (detailed
in the Appendix, Section \ref{sec:bounds-ipms-1}) which our technology
applied to, see \cite{gaunt2019lap}.

Most of all, Marie and Yvik thank Gesine Reinert whose ideas are at
the basis of references \cite{ernst2019first,ernst2019infinite}
without which this paper could not have been written.

\bibliographystyle{abbrv}

\appendix

\section{Some more proofs}
\label{sec:proofs}

 \begin{proof}[Proof of Lemma \ref{lem:Rp}]  Introduce
   $ \Phi^{\ell}_p(u, x, v) = {\chi^{\ell}(u, x)\chi^{-\ell}(x,
     v)}/{p(x)}$ for all $x \in \mathcal{S}(p)$ and 0 elsewhere, which
   allows to perform ``probabilistic integration'' as follows: if
   $f \in \mathrm{dom}(\Delta^{-\ell})$ is such that
   $(\Delta^{-\ell}f)$ is integrable on
   $[x_1, x_2] \cap \mathcal{S}(p)$ then
 \begin{equation}
 \label{eq:probaint}
 f(x_2)-f(x_1) =  \mathbb{E}\left[   \Phi^{\ell}_p(x_1, X, x_2)
   \Delta^{-\ell}f(X)\right] 
 \end{equation}
 for all $x_1 < x_2 \in \mathcal{S}(p)$. We can use this function to
 obtain 
     \begin{align*}
     \bar{h}(x) & = 
     \E \left[(h(x)-h(X))(\chi^\ell(X,x)+\chi^{-\ell}(x,X)) \right] \\
     &= \E\left[ \Delta^{-\ell} h(X_2) \E\left[ \Phi_p^\ell(X,X_2,x)-\Phi_p^\ell (x,X_2,X)  | X_2\right] \right]
     \end{align*}
     (we use the fact that $\chi^\ell(x,y)+\chi^{-\ell}(y,x) = 1+\mathbb{I}[\ell=0]\mathbb{I}[x=y]$) and it only
     remains to reorganize the integrand to obtain the claim. To this
     end we note how, by definition,
     \begin{align*}       
    \E\left[ \Phi_p^\ell(X,y,x)-\Phi_p^\ell (x,y,X)  \right] 
 & =\frac{\chi^{-\ell}(y,x)}{p(y)} \E[\chi^\ell(X,y)] - \frac{\chi^{\ell}(x,y)}{p(y)}\E[\chi^{-\ell}(y,X)]   \\
  & =   \chi^{-\ell}(y, x) \frac{P(y - a_{\ell})}{p(y)} 
 -  \chi^{\ell}(x, y)\frac{\bar P(y+b_{\ell}) }{p(y)}
     \end{align*}
     where the first identity is immediate by definition of $\Phi_p^\ell$ and the 
      last identity follows from the definition of the
     generalized indicator $\chi^{\ell}$. 
  \end{proof}   
 
\begin{proof}[Proof of Lemma \ref{prop:deltagrep}]
    The expressions \eqref{eq:sol1} and \eqref{eq:sol2} of the solution $g$
    are direct from the definition of $\mathcal{L}_p^\ell$ and its representation \eqref{eq:rep_caninv}. 
    The first expression \eqref{eq:dersol1} of the derivative is direct from the expression \eqref{eq:derivsolstek2}. For the second claim, we shall first prove the following results:
  \begin{align}
  \label{eq:deltag1}
  \Delta^{-\ell}g(x) & = \frac{\mathbb{E} \left[\tilde{K}_p^{\ell}(X_1,
  x+\ell) R_p^{\ell}(x, X_2) \bigg(  \Delta^{-\ell}\eta(X_2)
  \Delta^{-\ell}h(X_1) - \Delta^{-\ell} h(X_2)
  \Delta^{-\ell} \eta(X_1)  \bigg) \right]}{\big(-\mathcal{L}_p^\ell
\eta(x)\big)\big(-\mathcal{L}_p^\ell \eta(x+\ell)\big)}  \\
& =   \frac{\mathbb{E} \left[\bigg(\tilde{K}_p^{\ell}(X_1,
  x+\ell)  R_p^{\ell}(x, X_2)- R_p^{\ell}(x, X_1)\tilde{K}_p^{\ell}(X_2,
  x+\ell) \bigg) \Delta^{-\ell}h(X_1) \Delta^{-\ell}\eta(X_2) \right]}{\big(-\mathcal{L}_p^\ell
                         \eta(x)\big)\big(-\mathcal{L}_p^\ell
                         \eta(x+\ell)\big)}  \label{eq:deltag2}
\end{align}
  We first prove \eqref{eq:deltag1}. Starting from \eqref{eq:derivsolstek1} and   applying
  repeatedly \eqref{eq:rep_caninv} then  \eqref{eq:rep_h} (once to $h$ and once to $\eta$) we obtain
\begin{align*}
   \Delta^{-\ell} g(x)&  =\frac{\mathbb{E} \left[\tilde{K}_p^{\ell}(X_1,
  x+\ell) \bigg(  \bar\eta(x) \Delta^{-\ell}h(X_1)\big) - \bar h(x)
  \Delta^{-\ell} \eta(X_1)  \bigg) \right]}{\big(-\mathcal{L}_p^\ell
                        \eta(x)\big)\big(-\mathcal{L}_p^\ell \eta(x+\ell)\big)}  \\
  & = \frac{\mathbb{E} \left[\tilde{K}_p^{\ell}(X_1,
  x+\ell) R_p^{\ell}(x, X_2) \bigg(  \Delta^{-\ell}\eta(X_2)
  \Delta^{-\ell}h(X_1)\big) - \Delta^{-\ell} h(X_2)
  \Delta^{-\ell} \eta(X_1)  \bigg) \right]}{\big(-\mathcal{L}_p^\ell
\eta(x)\big)\big(-\mathcal{L}_p^\ell \eta(x+\ell)\big)}.
\end{align*}
We now prove \eqref{eq:deltag2}. By similar arguments as above, this
follows from
\begin{align*}
   \Delta^{-\ell} g(x) & 
=\frac{\mathbb{E} \left[\tilde{K}_p^{\ell}(X_1,
  x+\ell)   \bar\eta(x) \Delta^{-\ell}h(X_1) \right] - 
\big(-\mathcal{L}_p^\ell \eta(x+\ell)\big) \E\left[R_p^\ell(x,X_1)\Delta^{-\ell}h(X_1)\right]
}{\big(-\mathcal{L}_p^\ell
                         \eta(x+\ell)\big)\big(-\mathcal{L}_p^\ell
                         \eta(x+\ell)\big)} \\
  & = \frac{\mathbb{E} \left[\bigg(\tilde{K}_p^{\ell}(X_1,
  x+\ell)   \bar\eta(x) - R_p^{\ell}(x, X_1)\big(-\mathcal{L}_p^\ell
                         \eta(x+\ell)\big) \bigg) \Delta^{-\ell}h(X_1) \right]}{\big(-\mathcal{L}_p^\ell
                         \eta(x)\big)\big(-\mathcal{L}_p^\ell
    \eta(x+\ell)\big)}\\
& = \frac{\mathbb{E} \left[\bigg(\tilde{K}_p^{\ell}(X_1,
  x+\ell)  R_p^{\ell}(x, X_2)- R_p^{\ell}(x, X_1)\tilde{K}_p^{\ell}(X_2,
  x+\ell) \bigg) \Delta^{-\ell}h(X_1)\Delta^{-\ell}\eta(X_2) \right]}{\big(-\mathcal{L}_p^\ell
                         \eta(x)\big)\big(-\mathcal{L}_p^\ell
                         \eta(x+\ell)\big)}  .
\end{align*}

To conclude, we decompose the above expectation into four
parts with: $X_i<x+a_\ell$ and/or $X_i\geq x+a_\ell$, for $i=1,2$
(i.e., using either $\chi^{-\ell}(X_i,x)$ or
$\chi^{\ell}(x,X_i)$). Therefore, by considering separately
$\ell\in\{0,-1,1\}$, we can easily verify that
\begin{align*}
\tilde{K}_p^\ell(y,x+\ell) = \begin{cases} 
\dfrac{P(y-a_\ell)\bar{P}(x+a_\ell)}{p(y)p(x+\ell)} \mbox{ if } y<x+a_\ell \\[10pt]
\dfrac{P(x-b_\ell)\bar{P}(y+b_\ell)}{p(y)p(x+\ell)} \mbox{ if } y\geq x+a_\ell\end{cases} 
\quad \mbox{ and }\quad
R_p^\ell(x,y) = \begin{cases} 
\dfrac{P(y-a_\ell)}{p(y)} \mbox{ if } y<x+a_\ell \\[10pt]
\dfrac{-\bar{P}(y+b_\ell)}{p(y)} \mbox{ if } y\geq x+a_\ell\end{cases} 
\end{align*}
Basic manipulations then give
\begin{align*}
& \Delta^{-\ell} g(x) \big(-\mathcal{L}_p^\ell\eta(x)\big)\big(-\mathcal{L}_p^\ell\eta(x+\ell)\big)  \\
&= 
\frac{\bar{P}(x+a_\ell)+P(x-b_\ell)}{p(x+\ell)} \Bigg( 
\E\left[\Delta^{-\ell} h(X_1) \frac{\bar{P}(X_1+b_\ell)}{p(X_1)}\chi^{\ell}(x,X_1)\right] \E\left[\Delta^{-\ell} \eta(X_2) \frac{P(X_2-a_\ell)}{p(X_2)}\chi^{-\ell}(X_2,x)\right] \\
&\qquad\qquad\qquad \qquad\qquad- \E\left[\Delta^{-\ell} h(X_1) \frac{P(X_1-a_\ell)}{p(X_1)}\chi^{-\ell}(X_1,x)\right] \E\left[\Delta^{-\ell} \eta(X_2) \frac{\bar{P}(X_2+b_\ell)}{p(X_2)}\chi^{\ell}(x,X_2)\right] \Bigg)
\end{align*}
which leads to the claim as $\bar{P}(x+a_\ell)+P(x-b_\ell)=1$ and $\ell=a_\ell-b_\ell$.
\end{proof}

\begin{proof}[Proof of Lemma \ref{lem:pointmass2}]
The condition implies that $g^-$ is non decreasing and non negative over $\mathcal{S}(p) \cap (-\infty, \xi]$ and non decreasing and non positive over $\mathcal{S}(p) \cap (\xi, \infty)$. Therefore, 
the absolute value of the solution for point mass equation \eqref{eq:eq-indicator} reaches his supremum at $\xi$ or $\xi+1$, which gives the bound \eqref{eq:supg-indicator}. Moreover, the supremum of the difference is observed between $\xi$ and $\xi+1$. Using the explicit expression \eqref{eq:gpointmass} and the relation $\tau_p^\ell(x+\ell)p(x+\ell)=\tau_p^{-\ell}(x)p(x)$, we have
\begin{align*}
\sup_x |\Delta g(x)| &= g^-(\xi) - g^-(\xi+1) =
\frac{P(\xi-1)}{\tau_p^+(\xi)} + \frac{(1-P(\xi))p(\xi)}{\tau_p^+(\xi+1)p(\xi+1)} \\
&= \frac{P(\xi-1)}{\tau_p^+(\xi)} + \frac{1-P(\xi)}{\tau_p^-(\xi)}.
\end{align*}
Furthermore, as $x-\E[X] = \tau_p^+(x)-\tau_p^-(x)$, we have $\tau^-_p(\xi) \geq \tau^+_p(\xi)$ if $\xi \leq \E[X]$ (resp. $\tau^-_p(\xi) \leq \tau^+_p(\xi)$ if $\xi \geq \E[X]$). Therefore, the supremum is bounded by $\frac{P(\xi-1)+1-P(\xi)}{\tau_p^+(\xi)}=\frac{1-p(\xi)}{\tau_p^+(\xi)}$ if $\xi\leq \E[X]$ and otherwise by $\frac{1-p(\xi)}{\tau_p^-(\xi)}$. 

By remark \ref{rmk:borel}, the solution $g_A^\ell(x)$ is explicit and defined by $g_\xi$ for $\xi\in A$. The sign of $g_\xi$ changes according to the relative position of $\xi$ and $x$. Then, combined with the hypotheses, the maximal value of $|g_A^-(x)|$ is either observed at $x=\min_{\xi\in A}\{\xi\}=:\xi_{1}$ or $x=\max_{\xi\in A}\{\xi\}+1=:\xi_{2}+1$. Then, 
\begin{align*}
\sup_x |g^-_A(x)| 
&= \max\left\{ \frac{P(\xi_1-1)}{p(\xi_1)\tau_p^+(\xi_1)}\sum_{j\in A}p(j), \frac{1-P(\xi_2)}{p(\xi_2)\tau_p^-(\xi_2)}\sum_{j\in A}p(j) \right\} \\
&\leq \left( \sum_{j\in A}p(j) \right) \sup_{\xi\in A}\left\{ \frac{1}{\tau_p^+(\xi)p(\xi)}, \frac{1}{\tau_p^-(\xi)p(\xi)}\right\}.
\end{align*} 
Finally, due to the monotonicity of each $g_\xi(x)$ function, the maximal difference $|\Delta g_A(x)|$ is bounded by the supremum of $|\Delta g_\xi(x)|$ for $\xi \in A$, which is enough to conclude. 
\end{proof}

\medskip
 \begin{proof}[Proof of Theorem \ref{prop:EhEh}]
   First take $c_1(x)= c_2(x) = 1 $ in \eqref{eq:ehehv2}.
   Without any further assumptions on $h$, the solution $g_h^*$ of \eqref{eq:stek2} with $c(x)=1$ can be represented as
    $$ g_h^*(x) = \frac{\mathcal{L}_\infty^\ell h(x+\ell)}{c_1(x+\ell)}
     = \mathcal{L}_\infty^\ell h(x+\ell)
$$ 
Hence, we obtain \eqref{eq:EhEh1}.
    
    \medskip
    Next take $\eta_1 = \eta_2 = \mathrm{Id}$ in \eqref{eq:ehehv1}. Then,
    $-\mathcal{L}_{\infty}^\ell\eta_1(x) = \tau_\infty^\ell(x)$ and
    $-\mathcal{L}_n^\ell\eta_2(x) = \tau_n^\ell(x)$, the Stein kernels of $p_\infty$ and $p_n$. 
    Without any further assumptions on $h$, the solution $g_h(x)$ of \eqref{eq:stek1} with $\eta=\mathrm{Id}$ can be represented as
    \begin{align*}
    g_h(x) = \frac{-\mathcal{L}_\infty^\ell h(x+\ell)}{\tau_\infty(x+\ell)}
    \end{align*}    
    Hence we get \eqref{eq:EhEh2}.    
    
 \end{proof}

 \section{Some more inequalities}
 \label{sec:some-more-ineq-1}
 
\begin{cor}[Identity \eqref{eq:EhEh2}, Stein kernels and $\ell = 0$]
  \label{cor:kolbound2}
  Under the same assumptions and with exactly the same notations as in
  Corollary \ref{cor:kolbound}, the following results hold true.
  \begin{enumerate}
  \item   The Kolmogorov distance
  between the random variables $X_n$ and $X_\infty$ is
\begin{align}
  &   \mathrm{Kol}(X_n,X_\infty) \nonumber\\
&     = \sup_z \Bigg| \mathbb{E}\Bigg[ \frac{\tau_n(X_n) -
                              \tau_\infty(X_n)}{\tau_{\infty}(X_n)}\mathbb{I}_{\mathcal{S}_{\infty}}(X_n) 
                                     \times  \label{eq:corkol2} \\
  & \qquad  \Bigg( 
{P_{\infty}(z) - \mathbb{I}[X_n \le z]}+
                              \frac{X_n
                              -\mathbb{E}[X_{\infty}]}{\tau_{\infty}(X_n)
                              } \frac{P_{\infty}(X_n
                              \wedge z)  \bar P_{\infty}(X_n\vee
    z)}{p_{\infty}(X_n)} \Bigg) \Bigg]   
    + \kappa_{\mathrm{Id}}(z) \Bigg|     \nonumber \\
  & \le  \mathbb{E}\left[ \left| \frac{\tau_n(X_n)}{\tau_{\infty}(X_n)}-1
    \right|
\left( 1 + \frac{|X_n
                              -\mathbb{E}[X_{\infty}]|}{\tau_{\infty}(X_n)}
    \frac{P_\infty(X_n)\bar P_\infty(X_n)}{p_\infty(X_n)} \right)
 \mathbb{I}_{\mathcal{S}_{\infty}}(X_n)   \right] + \sup_z |\kappa_{\mathrm{Id}}(z)|
\end{align}
where 
\begin{align*}
  &  \kappa_{\mathrm{Id}}(z)= (\mu_n - \mu_{\infty})  \mathbb{E} \left[ \frac{P_{\infty}(X_n
                              \wedge z)  \bar P_{\infty}(X_n\vee
    z)}{\tau_{\infty}(X_n)p_{\infty}(X_n)}  \right] \\
  &  \quad +  \lim_{x \nearrow
  b_n \wedge b_{\infty}}\frac{\tau_n(x)}{\tau_\infty(x)}  \frac{p_n(x)}{p_\infty(x)}P_\infty(x\wedge z)\bar P_\infty(x\vee
                            z)- \lim_{x \searrow a_n \vee a_{\infty}}\frac{\tau_n(x)}{\tau_\infty(x)} \frac{p_n(x)}{p_\infty(x)}P_\infty(x\wedge z)\bar P_\infty(x\vee
    z).    
\end{align*}

\item  The Total
  Variation distance between $X_n$ and $X_\infty$ is
  \begin{align}
  &    \mathrm{TV}(X_n, X_{\infty})  \nonumber\\
&     =
                                                   \kappa_{\mathrm{Id}}(\mathbb{I}_{A_{n}^{\infty}})     \label{eq:cortv2}
                                                   +\mathbb{E}\Bigg[ \frac{\tau_n(X_n) -
                              \tau_\infty(X_n)}{\tau_{\infty}(X_n)}\mathbb{I}_{\mathcal{S}_{\infty}}(X_n)
                                     \times  \\
  & \qquad   \Bigg( 
{P}_{\infty}(A_n^{\infty})  - \mathbb{I}_{ A_n^{\infty}}(X_n) + 
                              \frac{X_n
    -\mathbb{E}[X_{\infty}]}{\tau_{\infty}(X_n)}
   \frac{ {P}_\infty(A_{n}^{\infty}\cap (-\infty,
      X_n])-{P}_\infty (A_{n}^{\infty})
                                                P_{\infty}(X_n)}{p_\infty(X_n)}  \Bigg) \Bigg] \nonumber  \\
  & \le  \mathbb{E}\left[ \left| \frac{\tau_n(X_n)}{\tau_{\infty}(X_n)}-1
    \right|
\left( 1 + \frac{|X_n
                              -\mathbb{E}[X_{\infty}]|}{\tau_{\infty}(X_n)}
    \frac{P_\infty(X_n)\bar P_\infty(X_n)}{p_\infty(X_n)} \right)\mathbb{I}_{\mathcal{S}_{\infty}}(X_n)
    \right] + \kappa_{\mathrm{Id}}(\mathbb{I}_{A_{n}^{\infty}})
  \end{align}
  with
  \begin{align*}  \kappa_{\mathrm{Id}}(\mathbb{I}_{A_{n}^{\infty}})
    & = \lim_{x \nearrow b_n \wedge b_{\infty}}\frac{\tau_n(x)}{\tau_\infty(x)} \frac{p_n(x)}{p_\infty(x)}
      \left( {P}_\infty(A_{n}^{\infty}\cap (-\infty,
      x])-{P}_\infty (A_{n}^{\infty})
                                                P_{\infty}(x)\right)
                                                \nonumber \\
      & \quad  - \lim_{x \searrow a_n \vee b_n}\frac{\tau_n(x)}{\tau_\infty(x)}
        \frac{p_n(x)}{p_\infty(x)}
      \left( {P}_\infty(A_{n}^{\infty}\cap (-\infty,
        x])-{P}_\infty (A_{n}^{\infty})  P_{\infty}(x)\right)
    \\
   & \quad + (\mu_n-\mu_{\infty}) \mathbb{E} \left[
   \frac{ {P}_\infty(A_{n}^{\infty}\cap (-\infty,
      X_n])-{P}_\infty (A_{n}^{\infty})
                                                P_{\infty}(X_n)}{
     {\tau_{\infty}(X_n)} p_\infty(X_n)}   \right]
    \end{align*}

  \item The Wasserstein distance between $X_n$ and $X_\infty$ is
    \begin{align}
      &    \mathrm{Wass}(X_n, X_{\infty})   =   \sup_{h \in \mathrm{Lip}(1)}  
      \Bigg|  \kappa_{\mathrm{Id}}(h)         \label{eq:corwas2}                    \\
      &  
+  \mathbb{E}\left[
 \frac{\tau_n(X_n) -
                              \tau_\infty(X_n)}{\tau_{\infty}(X_n)}
                                       h'(X_\infty)  \left(
                                       R_{\infty}(X_n, X_{\infty}) +
                                       \frac{X_n -
                                       \mathbb{E}[X_{\infty}]}{\tau_{\infty}(X_n)} 
                                       \tilde{K}_\infty(X_n,
        X_\infty)\right)\mathbb{I}_{\mathcal{S}_{\infty}}(X_n)\right] \Bigg| \nonumber  \\
 & \le 2 \mathbb{E}\left[  \left| \frac{\tau_n(X_n)}{\tau_{\infty}(X_n)}-1
    \right| |X_n - \mathbb{E}[X_{\infty}]|\mathbb{I}_{\mathcal{S}_{\infty}}(X_n)\right] 
   + \sup_{h \in \mathrm{Lip}(1)} \kappa_{\mathrm{Id}}(h)
    \end{align}
    where
    \begin{align*}
 \kappa_{\mathrm{Id}}(h)  &  = \lim_{x \searrow a_n \vee a_{\infty}}
      \frac{\tau_n(x)}{\tau_\infty(x)}\frac{p_n(x)}{p_{\infty}(x)}  \int_{a_{\infty}}^{b_{\infty}} h'(u)
       {P_{\infty}(x \wedge u) \bar P_{\infty}(x \vee u) } \mathrm{d}u
                       \nonumber 
      \\
      & \quad  -  \lim_{x \nearrow b_n \wedge b_{\infty}}\frac{\tau_n(x)}{\tau_\infty(x)} \frac{p_n(x)}{p_{\infty}(x)}  \int_{a_{\infty}}^{b_{\infty}}h'(u)
        {P_{\infty}(x \wedge u) \bar P_{\infty}(x \vee u) }
        \mathrm{d}u\\
      & \quad + (\mu_n - \mu_{\infty}) \mathbb{E}\left[\frac{h'(X_\infty)}{\tau_{\infty}(X_{n})}  \left(
                                       R_{\infty}(X_n, X_{\infty}) +
                                       \frac{X_n -
                                       \mathbb{E}[X_{\infty}]}{\tau_{\infty}(X_n)} 
                                       \tilde{K}_\infty(X_n,
        X_\infty)\right)\right]      
    \end{align*}
  \end{enumerate}
 
\end{cor}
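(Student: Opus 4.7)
The plan is to specialize the master identity \eqref{eq:EhEh2} of Theorem \ref{prop:EhEh} (with $\ell = 0$) to three specific choices of test function $h$ and then bound each resulting expression. The core computational input is Lemma \ref{lem:halflinein} applied with $c = \tau_\infty$: using that $\tau_\infty = -\mathcal{L}_\infty \mathrm{Id}$ together with the pseudo-inverse property $\mathcal{T}_\infty \mathcal{L}_\infty \mathrm{Id}(x) = x - \mu_\infty$, I obtain $\mathcal{T}_\infty \tau_\infty(x) = -(x - \mu_\infty)$. Inserting this into \eqref{eq:derivsol-indicatorHL} gives a closed form for the derivative of $-\mathcal{L}_\infty h(x)/\tau_\infty(x)$ in terms of $\mathbb{I}[x \le z] - P_\infty(z)$ and $(x - \mu_\infty)\,P_\infty(z \wedge x)\bar P_\infty(z \vee x)/(\tau_\infty(x) p_\infty(x))$, which is precisely the bracket that appears in \eqref{eq:corkol2}.

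For the Kolmogorov part, I take $h(x) = \mathbb{I}[x \le z]$, substitute the derivative just computed into \eqref{eq:EhEh2}, factor out $1/\tau_\infty(X_n)$, and take the supremum over $z \in \R$. The upper bound then follows by the triangle inequality together with the elementary pointwise estimate $P_\infty(z \wedge x)\bar P_\infty(z \vee x) \le P_\infty(x)\bar P_\infty(x)$. For the Total Variation part, I use the identity $\mathrm{TV}(X_n, X_\infty) = \mathbb{E}[\mathbb{I}_{A_n^{\infty}}(X_n)] - \mathbb{E}[\mathbb{I}_{A_n^{\infty}}(X_\infty)]$ established just before the corollary, and apply \eqref{eq:EhEh2} with $h = \mathbb{I}_{A_n^{\infty}}$; Remark \ref{rmk:borel} then lets me express $\mathcal{L}_\infty h$ as a sum of half-line terms over $A_n^{\infty}$, producing the expression $P_\infty(A_n^{\infty}\cap (-\infty, x]) - P_\infty(A_n^{\infty})P_\infty(x)$ in place of $P_\infty(z \wedge x)\bar P_\infty(z \vee x)$, after which the upper bound is argued exactly as in the Kolmogorov case.

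For the Wasserstein part, I fix $h \in \mathrm{Lip}(1)$ and use representation \eqref{eq:derivivsolrep2} (rather than Lemma \ref{lem:halflinein}) to rewrite the derivative of $-\mathcal{L}_\infty h/\tau_\infty$ as an expectation against $h'(X_\infty)$ involving the kernels $R_\infty(x, \cdot)$ and $\tilde K_\infty(x, \cdot)$ from Lemma \ref{lem:Rp}; with $c = \tau_\infty$ and $\mathcal{T}_\infty c(x) = -(x - \mu_\infty)$, the combination that appears is exactly $R_\infty(X_n, X_\infty) + ((X_n - \mu_\infty)/\tau_\infty(X_n))\,\tilde K_\infty(X_n, X_\infty)$. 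The uniform upper bound then uses $\|h'\|_\infty \le 1$ together with Remark \ref{rmk:expK}, namely $\mathbb{E}[\tilde K_\infty(x, X_\infty)] = \tau_\infty(x)$ and $\mathbb{E}[R_\infty(x, X_\infty)] = x - \mu_\infty$, so that after conditioning on $X_n$ the inner expectation telescopes to $2(X_n - \mu_\infty)$ and yields the advertised $2\,\mathbb{E}[|\tau_n/\tau_\infty - 1|\,|X_n - \mu_\infty|]$.

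The main obstacle, in all three cases, is the careful bookkeeping of the remainder $\kappa_{\mathrm{Id}}(h)$. Unfolding the definition from Theorem \ref{prop:EhEh}, $\kappa_{\mathrm{Id}}(h) = \mathbb{E}[\mathcal{T}_n(\tau_n/\tau_\infty \cdot \mathcal{L}_\infty h)(X_n)] + (\mu_n - \mu_\infty)\,\mathbb{E}[-\mathcal{L}_\infty h(X_n)/\tau_\infty(X_n)]$, the first term must be recast as a boundary contribution via the fundamental theorem of calculus: since $\mathbb{E}[\mathcal{T}_n f(X_n)] = [f p_n]_{a_n}^{b_n}$ whenever this makes sense, one obtains limits of $(\tau_n(x)/\tau_\infty(x))(p_n(x)/p_\infty(x))$ multiplied by the half-line or Borel-set factor evaluated at the endpoints of $\mathcal{S}_n \cap \mathcal{S}_\infty$. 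Tracking the correct endpoints $a_n \vee a_\infty$ and $b_n \wedge b_\infty$ is where attention is required because $\mathcal{S}_n$ and $\mathcal{S}_\infty$ may differ; the second term of $\kappa_{\mathrm{Id}}$ is already explicit and reproduces the $(\mu_n - \mu_\infty)$-prefactor appearing in each of the three displayed formulae.
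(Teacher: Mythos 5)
Your route is exactly the intended one: the paper states Corollary \ref{cor:kolbound2} without proof, as a direct specialization of identity \eqref{eq:EhEh2}, and your three steps (half-line indicators via Lemma \ref{lem:halflinein} with $c=\tau_\infty$ and $\mathcal{T}_\infty\tau_\infty(x)=-(x-\mu_\infty)$; Borel sets via the continuous analogue of Remark \ref{rmk:borel}; Lipschitz $h$ via representation \eqref{eq:derivivsolrep2}) together with the reading of $\mathbb{E}[\mathcal{T}_n f(X_n)]$ as a boundary term correctly reproduce the three displayed identities, the elementary domination $P_\infty(z\wedge x)\bar P_\infty(z\vee x)\le P_\infty(x)\bar P_\infty(x)$ (and its Borel-set analogue), and the $\kappa_{\mathrm{Id}}$ expressions. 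The one step stated too quickly is the factor $2$ in the Wasserstein bound: since $R_\infty(x,\cdot)$ changes sign at $x$, one has $\mathbb{E}[|R_\infty(x,X_\infty)|]=\mathbb{E}|x-X_\infty|$, which in general exceeds $|x-\mu_\infty|$, so the ``telescoping'' you invoke does not survive the insertion of $|h'|\le 1$ inside the expectation; the clean justification is to quote the ready-made non-uniform bound \eqref{eq:next9} of Proposition \ref{prop:boundsonsol} with $c=\tau_\infty$, which delivers $2\kappa_2|x-\mu_\infty|/\tau_\infty(x)$ directly and is what the corollary's final inequality rests on.
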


\begin{cor}[Identity \eqref{eq:EhEh2}, Stein kernels, $\ell = \pm1$] \label{cor:prec2}
Under the same assumptions and with exactly the same notations as in
Corollary \ref{cor:prec}, the following results hold true.
    \begin{align*}
  &    \mathrm{TV}(X_n, X_{\infty})  \nonumber\\
&     =
                                                   \kappa_{\mathrm{Id}}^{\ell}(\mathbb{I}_{A_{n}^{\infty}})     
                                                   +\mathbb{E}\Bigg[ \frac{\tau_n^{\ell}(X_n) -
                              \tau_\infty^{\ell}(X_n)}{\tau_{\infty}^{\ell}(X_n)} \mathbb{I}_{\mathcal{S}_{\infty}}(X_n+\ell)
                                     \times  \\
  & \qquad   \Bigg( 
{P}_{\infty}(A_n^{\infty})  - \mathbb{I}_{ A_n^{\infty}}(X_n) + 
                              \frac{X_n
    -\mathbb{E}[X_{\infty}]}{\tau_{\infty}^{\ell}(X_n+\ell)}
   \frac{ {P}_\infty(A_{n}^{\infty}\cap (-\infty,
      X_n-b_{\ell}])-{P}_\infty (A_{n}^{\infty})
                                                P_{\infty}(X_n-b_{\ell})}{p_\infty(X_n+\ell)}  \Bigg) \Bigg] \nonumber  \\
  & \le  \mathbb{E}\left[ \left| \frac{\tau_n^{\ell}(X_n)}{\tau_{\infty}^{\ell}(X_n)}-1
    \right|
\left( 1 + \frac{|X_n
                              -\mathbb{E}[X_{\infty}]|}{\tau_{\infty}^{\ell}(X_n+\ell)}
    \frac{P_\infty(X_n-b_{\ell})\bar P_\infty(X_n-b_{\ell})}{p_\infty(X_n+\ell)} \right)\mathbb{I}_{\mathcal{S}_{\infty}}(X_n+\ell)
    \right] + \kappa_{\mathrm{Id}}^{\ell}(\mathbb{I}_{A_{n}^{\infty}})
  \end{align*}
 with
\begin{align*}
  \kappa_{\mathrm{Id}}^{+}(\mathbb{I}_{A_{n}^{\infty}}) & = - \lim_{x
                                                   \searrow a_n \vee
                                                          a_{\infty}}\frac{\tau_n^{+}
                                                          (x)}{\tau_{\infty}^{+}(x)}\frac{p_n(x)}{p_\infty(x)}
                                                      \left( {P}_\infty(A_{n}^{\infty}\cap (-\infty,
                                                      x-1])-{P}_\infty
                                                      (A_{n}^{\infty})
                                                          P_{\infty}(x-1)\right)\\
  & \quad  + (\mu_\infty-\mu_n) \mathbb{E} \left[
   \frac{ {P}_\infty(A_{n}^{\infty}\cap (-\infty,
      X_n])-{P}_\infty (A_{n}^{\infty})
                                                P_{\infty}(X_n)}{
    {\tau_{\infty}^{+}(X_n+1)} p_\infty(X_n+1)}   \mathbb{I}_{\mathcal{S}_{\infty}}(X_n+1)\right]
 \end{align*}
and 
\begin{align*}    
    \kappa_{\mathrm{Id}}^{-}(\mathbb{I}_{A_{n}^{\infty}}) & = \lim_{x
                                                     \nearrow b_n
                                                     \wedge b_{\infty}}\frac{\tau_n^{-}(x)}{\tau_{\infty}^{-}(x)}\frac{p_n(x)}{p_\infty(x)}
                                                      \left( {P}_\infty(A_{n}^{\infty}\cap (-\infty,
                                                      x])-{P}_\infty (A_{n}^{\infty})
                                                      P_{\infty}(x)\right)
  \\
    & \quad  + (\mu_\infty-\mu_{n}) \mathbb{E} \left[
   \frac{ {P}_\infty(A_{n}^{\infty}\cap (-\infty,
      X_n])-{P}_\infty (A_{n}^{\infty})
                                                P_{\infty}(X_n)}{
    {\tau_{\infty}^{-}(X_n-1)} p_\infty(X_n-1)} \mathbb{I}_{\mathcal{S}_{\infty}}(X_n-1)\right]
                                                    \end{align*}

\end{cor}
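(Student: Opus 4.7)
The plan is to mirror the proof of Corollary \ref{cor:kolbound2} (the continuous Stein-kernel analogue) in the discrete setting $\ell=\pm 1$, replacing the continuous solution analysis by its counterpart from Lemma \ref{lem:pointmass} together with the Borel-set extension of Remark \ref{rmk:borel}. As noted just before Corollary \ref{cor:kolbound}, with $A_n^\infty=\{p_n\ge p_\infty\}$ and $h=\mathbb{I}_{A_n^\infty}$ we have $\mathrm{TV}(X_n,X_\infty)=\mathbb{E}h(X_n)-\mathbb{E}h(X_\infty)$; hence it is enough to evaluate identity \eqref{eq:EhEh2} of Theorem \ref{prop:EhEh} at this particular $h$.

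The first step is to compute $\mathcal{L}_\infty^\ell\mathbb{I}_A(x+\ell)$ explicitly. Combining the point-mass formula \eqref{eq:gpointmass} (with the trivial choice $c\equiv 1$, so that the solution in \eqref{eq:solstek1} coincides with $\mathcal{L}_\infty^\ell h(\cdot+\ell)$) and the linearity supplied by Remark \ref{rmk:borel}, together with the identity $\ell-a_\ell=-b_\ell$, one obtains
\begin{equation*}
\mathcal{L}_\infty^\ell\mathbb{I}_A(x+\ell)=\frac{P_\infty(A\cap(-\infty,x-b_\ell])-P_\infty(A)P_\infty(x-b_\ell)}{p_\infty(x+\ell)}.
\end{equation*}
Next, with $g_h:=-\mathcal{L}_\infty^\ell h(\cdot+\ell)/\tau_\infty^\ell(\cdot+\ell)$ the solution of the Stein equation $(x-\mu_\infty)g_h(x)-\tau_\infty^\ell(x)\Delta^{-\ell}g_h(x)=\bar h(x)$ associated with the standardization \eqref{eq:standop2} for $\eta=\mathrm{Id}$, we simply solve for $\Delta^{-\ell}g_h$ and substitute. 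The bracketed expression in the statement of Corollary \ref{cor:prec2} then appears directly, and the factor $\mathbb{I}_{\mathcal{S}_\infty}(X_n+\ell)$ reflects our convention that $\mathcal{L}_\infty^\ell h\equiv 0$ outside $\mathcal{S}_\infty$.

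The remainder $\kappa_{\mathrm{Id}}^\ell(h)$ of \eqref{eq:EhEh2} splits into two pieces. The term $(\mu_n-\mu_\infty)\mathbb{E}[-\mathcal{L}_\infty^\ell h(X_n+\ell)/\tau_\infty^\ell(X_n+\ell)]$ is handled by direct substitution of the formula displayed above. The term $\mathbb{E}[\mathcal{T}_n^\ell(\tau_n^\ell\mathcal{L}_\infty^\ell h/\tau_\infty^\ell)(X_n)]$ is handled by unfolding $\mathcal{T}_n^\ell f=\Delta^\ell(fp_n)/p_n$ against $p_n$: the sum $\sum_{x\in\mathcal{S}_n}\Delta^\ell(fp_n)(x)$ telescopes and leaves a single surviving boundary term, whose direction (upper or lower) is dictated by $\ell$. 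This is precisely why $\kappa_{\mathrm{Id}}^+$ features a lower-boundary limit ($x\searrow a_n\vee a_\infty$) while $\kappa_{\mathrm{Id}}^-$ features an upper-boundary one ($x\nearrow b_n\wedge b_\infty$); the support assumption $\mathcal{S}_n\subseteq\mathcal{S}_\infty$ ensures that the opposite boundary vanishes.

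Finally, the upper bound is obtained by applying the triangle inequality to the identity, using $|P_\infty(A_n^\infty)-\mathbb{I}_{A_n^\infty}(X_n)|\le 1$ for the first summand inside the bracket, and the elementary covariance estimate $|P_\infty(A\cap(-\infty,y])-P_\infty(A)P_\infty(y)|\le P_\infty(y)\bar P_\infty(y)$ (at $y=X_n-b_\ell$) for the second. The main obstacle is careful boundary bookkeeping: the shifts by $a_\ell$, $b_\ell$ and $\ell$ must be tracked consistently, and one must verify that the telescoping of $\mathbb{E}[\mathcal{T}_n^\ell(\cdot)(X_n)]$ leaves exactly one surviving limit on the prescribed side. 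The remaining algebraic manipulations are routine.
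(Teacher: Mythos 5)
Your proposal is correct and follows exactly the route the paper intends: the corollary is stated as a direct specialization of identity \eqref{eq:EhEh2} in Theorem \ref{prop:EhEh} to $h=\mathbb{I}_{A_n^{\infty}}$ (the paper omits the proof, remarking only that it is "not hard to obtain" these bounds from \eqref{eq:EhEh2}), and your explicit formula for $\mathcal{L}_\infty^{\ell}\mathbb{I}_A(x+\ell)$ via \eqref{eq:gpointmass} and Remark \ref{rmk:borel}, the telescoping treatment of $\mathbb{E}[\mathcal{T}_n^{\ell}(\cdot)(X_n)]$, and the covariance bound $|P_\infty(A\cap(-\infty,y])-P_\infty(A)P_\infty(y)|\le P_\infty(y)\bar P_\infty(y)$ are all exactly what is needed. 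The only caveat is the sign of the term $\frac{X_n-\mu_\infty}{\tau_\infty^{\ell}(X_n+\ell)}\mathcal{L}_\infty^{\ell}h(X_n+\ell)$ in the displayed identity (solving the Stein equation as you describe yields a minus where the corollary prints a plus, apparently a typo inherited from Corollary \ref{cor:kolbound2}), which in any case does not affect the stated upper bound since it is absorbed by the absolute values.
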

 
\section{More examples of Stein equations,  solutions and bounds} 
 \label{sec:stein-equat-solut}

Before proceeding we recall that, for $h : \R \to \R$, we write
$\kappa_1:=\kappa_1(h) = \sup_{y\in\mathcal{S}(p)}h(y) -
\inf_{y\in\mathcal{S}(p)}h(y)$ and
$\kappa_2:=\kappa_2(h) =
\sup_{y\in\mathcal{S}(p)}|\Delta^{-\ell}h(y)|$. We also introduce the
notations (not present in the main text):
 \begin{equation*}
   M_p(x):= \frac{P(x)\bar{P}(x)}{p(x)} \mbox{ and } \tilde{M}_p^{\ell}(x) =
   \frac{\int_{{a+a_\ell}}^{x+\ell}P(u) \mathrm \mu(\mathrm{d}u)
     \int_{{x+\ell}}^{{b-b_\ell}}\bar{P}(u) \mu(\mathrm{d}u)}{p({x+\ell})}
 \end{equation*}
 with the convention that these functions are set to 0 outside the
 support of $p$.

In this section we apply the theory from  Section
\ref{sec:formalism} to various illustrative concrete examples. In all
cases we explicit the bounds from Section \ref{sec:examples}. 
\begin{exm}[{Beta distribution}] \label{ex:beta} 
This distribution has pdf and support
  \begin{align*}
    p_{\alpha, \beta}(x) = \frac{x^{\alpha-1}(1-x)^{\beta-1}}{
  B(\alpha,\beta)}, \qquad \mathcal{S}(p_{\alpha, \beta}) = (0,1).
  \end{align*}
The cdf $P_{\alpha, \beta}$
  and survival $\bar{P}_{\alpha, \beta}$ do not bear an explicit
  expression.  Simple computations show that
  \begin{align*}
    \rho_{\alpha, \beta}(x)=\frac{\alpha-1-x(\alpha+\beta-2)}{x(1-x)}
    \mbox{ and } \tau_{\alpha, \beta}(x) =\frac{x(1-x)}{\alpha+\beta}.
  \end{align*}
Taking $c(x)=1$ in \eqref{eq:standop} 
leads to the Stein
  equation
  \begin{align*}
     \frac{\alpha-1-x(\alpha+\beta-2)}{x(1-x)}g_1(x)+ g'_1(x) = h(x) -
\mathbb{E}h(X)
  \end{align*}
with  conditions 
\begin{align*}
\int_0^{1}|(g_1(x) p_{\alpha, \beta}(x))'| \mathrm{d}x <\infty  \mbox{ and } \lim_{x \to 0} g_1(x)s
p_{\alpha, \beta}(x) =   \lim_{x \to 1}  g(x) p_{\alpha, \beta}(x).  
\end{align*}
The solution 
  \begin{align*}
    g_1(x) = \frac{1}{x^{\alpha-1}(1-x)^{\beta-1}} \int_{0}^x ( h(u) - \mathbb{E}h(X)) u^{\alpha-1}(1-u)^{\beta-1} \mathrm{d}u. 
\end{align*}      
    satisfies
  \begin{align*}
    & |g_1(x)|   \le  \kappa_1   M_{\alpha, \beta}(x); \quad 
  |g_1(x)|   \le  \kappa_2 \frac{x(1-x)}{\alpha+\beta}    \\ 
    & |g'_1(x)| \le \kappa_1  \left(1 +
      \frac{|\alpha-1-x(\alpha+\beta-2)|}{x(1-x)}
      M_{\alpha, \beta}(x)\right) \\
    & |g_1'(x)|   \le  \kappa_2  \left(
      \left|x-\frac{\alpha}{\alpha+\beta} \right|  +\left| 
x\left(   1-\frac{2}{\alpha+\beta}\right)-
      \frac{\alpha-1}{\alpha+\beta}  \right|\right)
  \end{align*}
    Taking $\eta(x) = -x$ in \eqref{eq:standop2} 
      leads to the Stein
  equation,
  \begin{align*}
    \left(\frac{\alpha}{\alpha+\beta}-x\right)g_2(x) +
\frac{x(1-x)}{\alpha+\beta} g'_2(x) = h(x) - \mathbb{E}h(X)
  \end{align*}
with  conditions 
\begin{align*} 
\int_0^{1}|(x(1-x)g_2(x) p_{\alpha, \beta}(x))'| \mathrm{d}x <\infty  \mbox{ and } \lim_{x \to 0} x(1-x)g_2(x)
p_{\alpha, \beta}(x) =   \lim_{x \to 1} x(1-x) g_2(x) p_{\alpha, \beta}(x).  
\end{align*}
The solution
  \begin{align*}
    g_2(x) = \frac{\alpha+\beta}{x^{\alpha}(1-x)^{\beta}} \int_{0}^x ( h(u) - \mathbb{E}h(X)) u^{\alpha-1}(1-u)^{\beta-1} \mathrm{d}u
\end{align*}      
satisfies
\begin{align*}
    & |g_2(x)|   \le \kappa_1  \frac{\alpha+\beta}{x(1-x)}  M_{\alpha,
      \beta}(x); \quad |g_2(x)|   \le  \kappa_2 \\ 
    & |g'_2(x)| \le  \kappa_1 \frac{\alpha+\beta}{x(1-x)} \left(1 + \left|\frac{\alpha}{\alpha+\beta}-x\right| \frac{1}{x(1-x)} \right) 
    \\ 
    & |g'_2(x)| \le 2 \kappa_2  \frac{\alpha+\beta}{x(1-x)}
      \left|x-\frac{\alpha}{\alpha+\beta} \right| ; \quad  |g'_2(x)| \le 2 \kappa_2  \frac{(\alpha+\beta)^2}{x^2(1-x)^2}
       \tilde{M}_{\alpha, \beta}(x)
  \end{align*}

 \medskip
 \noindent   \emph{Literature review:}   The classic equation is
 \begin{align*}
     \left(\alpha- (\alpha+\beta)x\right)g_2(x) +
 x(1-x)g'_2(x) = h(x) - \mathbb{E}h(X)
 \end{align*}
 which is equivalent to our
 second equation, up to multiplication by $\alpha + \beta$.  Bounds
 on solutions to this equation are given in \cite[Proposition
 4.2]{dobler2015stein} and \cite[Lemma 3.2,
 3.4]{goldstein2013stein}. 
 Obviously, obtaining uniform bounds requires bounding
 $M_{\alpha, \beta}$ and $\tilde{M}_{\alpha, \beta}(x)$; bounds on
 these functions are provided in \cite{dobler2015stein}.



\end{exm}

\begin{exm}[{Gamma distribution}] \label{ex:gamma} This distribution
 has pdf
  \begin{align*}
    p_{r, \lambda}(x) = \frac{\lambda^{r} x^{r-1}e^{- \lambda
        x}}{\Gamma(r)}, \quad \mathcal{S}(p_{r, \lambda}(x)) = (0, \infty). 
  \end{align*}
  The cdf $P_{r, \lambda}$ and survival $\bar{P}_{r, \lambda}$ do not
  bear a general explicit expression. Simple computations show that
  \begin{align*}
    \rho_{r, \lambda}(x)=  \frac{r-1}{x} - \lambda \mbox{ and } \tau_{r, \lambda}(x) =
    \frac{x}{\lambda}. 
  \end{align*}
  Taking $c(x)=1$ in \eqref{eq:standop} leads 
to the Stein
  equation
  \begin{align*}
    \left( \frac{r-1}{x} - \lambda \right)g_1(x)+ g'_1(x) = h(x) - \mathbb{E}h(X)
  \end{align*}
with  conditions 
\begin{align*}
\int_0^{\infty}|(g_1(x) p_{r, \lambda}(x))'| \mathrm{d}x < \infty \mbox{ and } \lim_{x \to 0} g_1(x) p_{r,
  \lambda}(x) =   \lim_{x \to \infty} g_1(x) p_{r, \lambda}(x).  
\end{align*}
The solution
  \begin{align*}
    g_1(x) = \frac{e^{\lambda x}}{x^{r-1}} \int_{0}^x ( h(u) - \mathbb{E}h(X)) u^{r-1}e^{-\lambda u} \mathrm{d}u.
  \end{align*}
  satisfies
  \begin{align*}
    & |g_1(x)|   \le  \kappa_1   M_{r, \lambda}(x) ; \quad |g_1(x)|   \le  \kappa_2 \frac{x}{\lambda}    \\ 
    & |g'_1(x)| \le \kappa_1  \left(1 +
      \left| \frac{r-1}{x} - \lambda \right| M_{r, \lambda}(x) \right); \quad  |g_1'(x)|   \le  \kappa_2  \left(  \left|x-\frac{r}{\lambda}
      \right|  +
      \left|x- \frac{r-1}{\lambda}\right| \right)
  \end{align*}
  Taking $\eta(x) = -x$ in \eqref{eq:standop2} 
  leads
  to the Stein equation
  \begin{align*}
     \left(\frac{r}{\lambda}-x\right)g_2(x) + \frac{x}{\lambda} g'_2(x) = h(x) - \mathbb{E}h(X)
  \end{align*}
with  conditions 
\begin{align*}
\int_0^{\infty}|(xg_2(x) p_{r, \lambda}(x))'| \mathrm{d}x < \infty  \mbox{ and } \lim_{x \to 0}  x g_2(x) p_{r,
  \lambda}(x) =   \lim_{x \to \infty} xg_2(x) p_{r, \lambda}(x).  
\end{align*}
The solution  
  \begin{align*}
    g_2(x) =\frac{\lambda e^{\lambda x}}{x^{r}} \int_{0}^x ( h(u) - \mathbb{E}h(X)) u^{r-1}e^{-\lambda u} \mathrm{d}u
  \end{align*}
  satisfies
  \begin{align*}
    & |g_2(x)|   \le  \kappa_1   M_{\lambda, r}(x) \frac{\lambda}{x};
      \quad |g_2(x)|   \le \kappa_2  \\
    & |g'_2(x)| \le  \kappa_1 \frac{\lambda}{x} \left(1 + \left|x-\frac{r}{\lambda}\right| \frac{\lambda}{x}  M_{r,\lambda}(x) \right)     
    \\ 
    & |g'_2(x)| \le 2  \kappa_2\frac{\lambda}{x}  \left(
      \left|x-\frac{r}{\lambda} \right| \right); \quad |g'_2(x)| \le 2   \kappa_2  \frac{\lambda^2}{x^2}
\tilde{M}_{r,\lambda}(x). 
  \end{align*}

\medskip
\noindent   \emph{Literature review:}  There is interest in the
literature for the particular choices 
  $r = \nu/2$ and $\lambda = 1/2$ (chi-square distribution) and
  $r = 1$ (exponential distribution) with operators
  \begin{align*}
   &  \mathcal{A}g(x) =     x g'(x) - (r - \lambda x) g(x) \\
   &  \mathcal{A}g(x) =     g'(x) - \lambda g(x) \qquad (\mbox{exponential distribution})
  \end{align*}
  Our bounds apply to the $\chi^2$ and exponential as well, although
  in this last case further simplifications follow from the fact that
  \begin{align*}
    M_{\lambda}(x) = \frac{1-e^{-x \lambda}}{\lambda} \approx \frac{1}{\lambda} \mbox{ and }
    \tilde{M}_{\lambda} = \frac{\lambda x - 1 + e^{-x
        \lambda}}{\lambda^3} \approx \frac{x}{\lambda^2}.
  \end{align*}
Comparable bounds  from the literature can be found in
  \cite[Theorem 2.6]{Luk1994}, \cite[Theorem 3.4]{Pi04},
  \cite{gaunt2013rates} or \cite[Theorem 2.2]{gaunt2016chi} and
  \cite[Theorem 2.1]{dobler2018gamma} and
  \cite{chatterjee2011nonnormal}. Our non uniform bounds improve on
  the available ones whenever they are comparable. In particular, the
  first bound from \cite[Theorem 2.1 equation (19)]{dobler2018gamma}
  follows immediately from ours (recall that it is necessary to divide
  by $\lambda$), and the second bound as expressed in their equation
  (21) follows from the fact that $|g'_2(x)| \le 2\lambda $ uniformly
  in $x, r, \lambda$ positive.  It is interesting to note that the
  dependence on $\lambda$ is linear (and hence, in the classic
  parametrization, there is no dependence on $\lambda$ for this upper
  bound).

  \end{exm}
 
\begin{exm}[Student distribution]\label{ex:student}
This distribution has pdf 
\begin{align*}
  p_{\nu}(x) = \frac{(\nu/(\nu+x^2))^{(1+\nu)/2} \nu^{-1/2}}{
  B(\nu/2,1/2)}, \quad \mathcal{S}(p_\nu)=(-\infty, \infty).
\end{align*}
 The cdf
  $P_{\nu}$  and survival $\bar{P}_{\nu}$  do not bear an explicit
  expression. Simple computations show that
  \begin{align*}
    \rho_{\nu}(x) = - \frac{x(\nu+1)}{x^2+\nu} \mbox{ and }
    \tau_{\nu}(x) =\frac{x^2+\nu}{\nu-1}. 
  \end{align*}
  Taking $c(x)=1$ in \eqref{eq:standop}  leads 
      to the Stein
      equation
      \begin{align*}
   - \frac{x(\nu+1)}{x^2+\nu} g_1(x) +  g_1'(x)  = h(x) - \mathbb{E}h(X)     
      \end{align*}
      with  conditions 
\begin{align*}
\int_{-\infty}^{\infty}|(g_1(x) p_{\nu}(x))'| \mathrm{d}x < \infty  \mbox{ and
} \lim_{x \to -\infty}   g_1(x) p_{\nu}(x) =   \lim_{x \to \infty}  g_1(x) p_{\nu}(x).  
\end{align*}
The solution given by
      \begin{align*}    
        g_1(x) =  (\nu+x^2)^{(1+\nu)/2}   \int_{-\infty}^x
        \left( h(u) -\mathbb{E}h(X) \right) / (\nu+u^2)^{(1+\nu)/2} \mathrm{d}u.
      \end{align*}
satisfies 
       \begin{align*}
    & |g_1(x)|   \le  \kappa_1   M_{\nu}(x) ; \quad |g_1(x)|   \le
      \kappa_2 {\frac{x^2+\nu}{\nu-1}}\\ 
    & |g'_1(x)| \le \kappa_1  \left(1 +
      \left|  \frac{{x}(\nu+1)}{x^2+\nu} \right| M_{\nu}(x) \right);
      \quad  |g_1'(x)|   \le  \kappa_2  \left(
 |x|  + {|x|}  \left|  \frac{\nu+1}{\nu-1} \right|     \right) {= \kappa_2 |x| \frac{2\nu}{\nu-1}}
       \end{align*}
  Taking $\eta(x) = -x$ in \eqref{eq:standop2} leads to 
  the Stein equation
  \begin{align*}
    -xg_2(x) + \frac{x^2+\nu}{\nu-1} g_2'(x)   = h(x) - \mathbb{E}h(X)
  \end{align*}
      with  conditions 
{
	\begin{align*}
\int_{-\infty}^{\infty}|((x^2+\nu)g_2(x) p_{\nu}(x))'| \mathrm{d}x < \infty  \mbox{ and
} \lim_{x \to -\infty}   (x^2+\nu) g_2(x) p_{\nu}(x) =   \lim_{x \to \infty}   (x^2+\nu) g_2(x) p_{\nu}(x).  
\end{align*}
}
The solution 
  \begin{align*}
    g_2(x) ={(\nu-1)(\nu+x^2)^{(\nu-1)/2}}   \int_{-\infty}^x
        \left( h(u) -\mathbb{E}h(X) \right) / (\nu+u^2)^{(1+\nu)/2}
        \mathrm{d}u
\end{align*}      
satisfies
\begin{align*}
    & |g_2(x)|   \le  \kappa_1   M_{\nu}(x) \frac{\nu-1}{x^2+\nu} ;
      \quad |g_2(x)|   \le \kappa_2  \\
    & |g'_2(x)| \le  \kappa_1  \frac{\nu-1}{x^2+\nu}  \left(1 + |x|
      \frac{\nu-1}{x^2+\nu} M_{\nu}(x) \right)      
    \\ 
    & |g'_2(x)| \le 2  \kappa_2  |x|\frac{\nu-1}{x^2+\nu}; \quad
      |g'_2(x)| \le 2   \kappa_2   \left(  \frac{\nu-1}{x^2+\nu}  \right)^2
\tilde{M}_{\nu}(x). 
  \end{align*}

\medskip
\noindent   \emph{Literature review:}    An early reference on Stein operators for Student distribution is
\cite{schoutens2001orthogonal} which considers   operator
\begin{align*}
 \mathcal{A}g(x) =  (x^2+\nu) g'(x) -  (\nu-1) x g(x) 
\end{align*}
(see also \cite{ley2017stein}). Our bounds seem to outperform those
from  \cite[More complete report,
p23]{schoutens2001orthogonal}.

\end{exm}
\begin{exm}[{Fr\'echet distribution}]
\label{sec:frechet-distribution}

This distribution has pdf 
\begin{align*}
  p_{\alpha}(x) = \alpha x^{-\alpha-1} e^{-x^{-\alpha}}, \quad \mathcal{S}(p_\alpha)=(0, \infty).
\end{align*}
with cdf and survival 
\begin{align*}
  P_{\alpha}(x) = e^{-x^{-\alpha}} \mbox{ and } \bar{P}_{\alpha}(x) = 1- e^{-x^{-\alpha}} 
\end{align*}
so that
\begin{align*}
  M_{\alpha}(x) = \frac{x^{1+\alpha}(1-e^{-x^{-\alpha}})}{\alpha} 
\end{align*}
but the function $\tilde{M}_{\alpha}$ does not bear an explicit
expression.  Simple computations show that
$ \rho_{\alpha}(x) = {\alpha}{x^{-\alpha-1}} - (1+\alpha){x}^{-1}$ but
the Stein kernel $\tau_{\alpha}$ does not bear an explicit expression.
Hence the different bounds obtained with the choices $c = 1$ or
$c= \tau$ will not lead to explicit results and we do not report them
here -- they remain computable nevertheless. 
Another potentially interesting choice is  $c(x)=x^{\alpha+1}$ in
\eqref{eq:standop} to get the Stein equation
      \begin{equation*}
 \alpha g_2(x) +  x^{\alpha+1} g_2'(x)  = h(x) - \mathbb{E}h(X)     
      \end{equation*}
      with  conditions 
{      
\begin{equation*}
\int_0^{\infty}|( g_2(x) x^{\alpha+1})'| \mathrm{d}x < \infty  \mbox{ and
} \lim_{x \to 0}    g_2(x) x^{\alpha+1} =   \lim_{x \to \infty} g_2(x)
x^{\alpha+1}. 
\end{equation*}
}
The solution given by
      \begin{equation*}
        g_2(x) =  e^{x^{-\alpha}}  \int_{0}^x
        \left( h(u) -\mathbb{E}h(X) \right) u^{-\alpha-1} e^{-u^{-\alpha}} \mathrm{d}u.
      \end{equation*}
satisfies 
       \begin{align*}
    & |g_2(x)|   \le  \kappa_1   \frac{1-e^{-x^{-\alpha}}}{\alpha}\\
    & |g'_2(x)| \le \kappa_1   \frac{1}{x^{\alpha+1}} \left( 1+ { 
(1-e^{-x^{-\alpha}})}\right) 
       \end{align*}
       It is likely that other choices of $c$ lead to other
       interesting equations and bounds, but we leave this to ulterior
       investigations. We refer to \cite[Section 2.6]{ley2017stein}. 

     \end{exm}

     \begin{exm}[Rayleigh distribution]
\label{sec:rayl-distr}

This distribution with support $(0, \infty)$ has explicit pdf, cdf and
survival function given by
\begin{equation*}
  p_{\mathrm{r}}(x) = 2x e^{-x^2}, \,  P_{\mathrm{r}}(x) = 1 - e^{-x^2}  \mbox{ and } \bar{P}_{\mathrm{r}}(x)
  = e^{-x^2},
\end{equation*}
respectively. The mean and variance of $p_{\mathrm{r}}$ are
$\sqrt \pi /2$ and $1-\pi/4$, respectively.  Also
\begin{equation*}
  M_{\mathrm{r}}(x) =  \frac{1 - e^{-x^2}}{2x} \mbox{ and } \tilde{M}_{\mathrm{r}}(x) =
  \frac{\int_0^x (1 - e^{-u^2}) \mathrm{d}u \int_x^{\infty} e^{-u^2}
    \mathrm{d}u}{2x e^{-x^2}} = \frac{(x-\sqrt{\pi} \bar{\Phi}(\sqrt{2}x))\sqrt{\pi} \bar{\Phi}(\sqrt{2}x)}{2x e^{-x^2}}.  
\end{equation*}
Simple  computations  show that
\begin{equation*}
  \rho_{\mathrm{r}}(x) = \frac{1}{x} - 2x \mbox{ and }
  \tau_{\mathrm{r}} (x) = \frac{
  2x + 2\sqrt{\pi}e^{x^2}\bar{\Phi}(\sqrt{2}x)-\sqrt{\pi} }{4x}
\end{equation*}
and also
\begin{equation*}
  0 \le  \tau_{\mathrm{r}} (x) \le \frac{1}{2}. 
\end{equation*}
Taking $c(x)=1$ in \eqref{eq:standop}  leads  
      to the Stein
      equation
      \begin{equation*}
\Big(\frac{1}{x} - 2x\Big) g_1(x) +  g_1'(x)  = h(x) - \mathbb{E}h(X)     
      \end{equation*}
      with  conditions 
\begin{equation*}
\int_{-\infty}^{\infty}|(g_1(x) p_{\mathrm{r}}(x))'| \mathrm{d}x < \infty  \mbox{ and
} \lim_{x \to 0}   g_1(x) p_{\mathrm{r}}(x) =   \lim_{x \to \infty}
g_1(x) p_{\mathrm{r}}(x).   
\end{equation*}
The solution given by
      \begin{equation*}
        g_1(x) =  \frac{e^{x^2}}{x }   \int_0^x
        \left( h(u) -\mathbb{E}h(X) \right)u e^{-u^2} \mathrm{d}u.
      \end{equation*}
satisfies 
       \begin{align*}
         & |g_1(x)|   \le  \kappa_1   \frac{1 - e^{-x^2}}{2x} ; \quad |g_1(x)|   \le
           \kappa_2 \frac{\sqrt{\pi} -
           2x - 2\sqrt{\pi}e^{x^2}\bar{\Phi}(\sqrt{2}x)}{4x} \le \frac{\kappa_2}{2}\\ 
         & |g'_1(x)| \le \kappa_1  \left(1 + \tau_r(x)
           M_{\mathrm{r}}(x) \right) \le  \kappa_1  \left(1 +  
          \frac{ M_{\mathrm{r}}(x) }{2}\right)\\
         &   |g_1'(x)|   \le  \kappa_2  \left(
           \Big|x-\frac{\sqrt \pi}{2}\Big|  +  \left|\Big(\frac{1}{x} -
           2x\Big)
           \right| \tau_{\mathrm{r}}(x)    \right) \le
           \kappa_2  \left(
           \Big|x-\frac{\sqrt \pi}{2}\Big|  +  \frac{1}{2}\left|\frac{1}{x} -
           2x
           \right|  \right) 
       \end{align*}
       Taking $\eta(x) = -x$ in \eqref{eq:standop2} leads to non
       explicit equations and bounds which are therefore not
       reproduced here.
\end{exm}

\begin{exm}[Binomial distribution] \label{ex:binom} 
This distribution has pmf 
\begin{align*}
  p_{n, \theta}(x) = \binom{n}{x} \theta^x(1-\theta)^{n-x}, \quad
  \mathcal{S}(p_{n, \theta}) = \left\{ 0, \ldots, n \right\}. 
\end{align*}
The cdf  $P_{n, \theta}$  and
  survival $\bar{P}_{n, \theta}$ do not bear an explicit
  expression. Simple computations show that
  \begin{align*}
 & \rho^-_{n,\theta}(x) = \frac{(n+1)\theta-x}{\theta(n+1-x)}  &  \rho^+_{n,\theta}(x) = \frac{(n+1)\theta-(x+1)}{(x+1)(1-\theta)}
    \\
  &  \tau^-_{n,\theta}(x) = \theta(n-x) &  \tau^+_{n,\theta}(x) = (1-\theta)x 
  \end{align*}
  The Stein equations associated to $\rho^{\pm}_{n,\theta}$ are, on the one hand,
  \begin{align*}
 \frac{(n+1)\theta-x}{\theta(n+1-x)} g_1^{-}(x) +
    \Delta^+g_1^{-}(x) = h(x) - \mathbb{E}h(X)  
  \end{align*}
   with conditions
  \begin{align*}
\sum_{j=0}^n \left| \Delta^-(g_1^-(j+1) p_{n, \theta}(j)) \right| < \infty \mbox{ and }  g_1^-(n+1) p_{n, \theta}(n) = 0
\end{align*}
and solution
\begin{align*}
  g_1^-(x) = \frac{1}{p_{n, \theta}(x-1)} \sum_{j=0}^{x-1} (h(j) -  \mathbb{E}h(X)) p_{n, \theta}(j)
\end{align*}
  and,  on the other hand,  
  \begin{align*}
 \frac{(n+1)\theta-(x+1)}{(x+1)(1-\theta)} g_1^{+}(x) +
 \Delta^-g_1^{+}(x) =     h(x) - \mathbb{E}h(X) 
  \end{align*}
  with conditions
  \begin{align*}
    \sum_{j=0}^n \left| \Delta^+ (g_1^+(j-1) p_{n, \theta}(j)) \right| < \infty
    \mbox{ and }
    g_1^+(-1) p_{n, \theta}(0) = 0 
  \end{align*}
  and solution
  \begin{align*}
  g_1^+(x) &= \frac{1}{p_{n, \theta}(x+1)} \sum_{j=0}^{x} (h(j) -  \mathbb{E}h(X)) p_{n, \theta}(j) \\
  &= \frac{(1-\theta)(x+1)}{\theta(n-x)} g_1^-(x+1).
\end{align*}

These functions satisfy
 \begin{align*}
    & |g_1^-(x)| \le \kappa_1 M_{n,\theta}(x-1) ;\\
    & |g_1^+(x)|   \le  \kappa_1 M_{n, \theta}(x) \frac{(x+1)(1-\theta)}{\theta(n-x)}  ; \\
    & |g_1^-(x)| \le \kappa_2 \theta (n-x+1) ; \\
    & |g_1^+(x)| \le \kappa_2  (1-\theta)(x+1); \\
    & |\Delta^+{g_1^-}(x)| \le  \kappa_1   \left( 1 + \frac{(n+1)\theta-x}{\theta(n-x+1)}M_{n,\theta}(x-1) \right)
    \\ 
    & |\Delta^-{g_1^+}(x)| \le  \kappa_1   \left( 1 + \frac{(n+1)\theta-(x+1)}{\theta(n-x)}M_{n,\theta}(x) \right)
    \\ 
    & |\Delta^+g_1^-(x)| \le \kappa_2 (|x-n\theta| + |x-(n+1)\theta|); 
    \\
    &  |\Delta^- g_1^{+}(x)| \le \kappa_2  (|x-n\theta| + |x+1-(n+1)\theta|); 
 \end{align*}
  The
  Stein equations associated to $\tau^{\pm}_{n,\theta}$  are on the
  one hand 
  \begin{align*}
  (x-n\theta)g_2^{-}(x) - \theta(n-x)\Delta^+g_2^{-}(x) =     h(x) - \mathbb{E}h(X)
  \end{align*}
  with condition
  \begin{align*}
  \sum_{j=0}^n \left| \Delta^- ( \theta(n-j) g_2^-(j+1) p_{n, \theta}(j)) \right| < \infty
  \end{align*}
  and      on the other hand  
  \begin{align*}
(x-n\theta)g_2^{+}(x) -(1-\theta)x \Delta^-g_2^{+}(x) = h(x) - \mathbb{E}h(X) 
  \end{align*}
  with condition
  \begin{align*}
\sum_{j=0}^n \left| \Delta^+((1-\theta) j g_2^+(j-1) p_{n, \theta}(j))
\right| < \infty
\end{align*}
(in both cases the border conditions disappear because of the premultiplying factor). 
These functions satisfy
\begin{align*}
g_2^+(x) = g_2^-(x+1) \mbox{ and } |\Delta^+{g_2^-}(x)| = |\Delta^-{g_2^+}(x)| =: |\Delta{g_2}(x)|.
\end{align*}
Moreover, 
 \begin{align*}
    & |g_2^+(x)| 
     \le     \frac{\kappa_1 M_{n, \theta}(x)}{\theta(n-x)} \wedge \kappa_2 ; \\
    & |\Delta{g_2}(x)|\le  \left( \frac{ \kappa_1 }{\theta(n-x)}  \left(1 + 
      \frac{|x-n\theta|M_{n,\theta}(x-1) }{\theta(n-x+1)} \right)  \right) \wedge  
      \left( \frac{ \kappa_1 }{(1-\theta)x}  \left(1 + \frac{|x-n\theta|M_{n,\theta}(x)}{\theta(n-x)}  \right)  \right)   
    \\ 
    &|\Delta{g_2}(x)|  \le 2  \kappa_2 |x-n\theta| \left(\frac{1}{\theta(n-x)} \wedge \frac{1}{(1-\theta)x}\right) \\
    & |\Delta{g_2}(x)|  \le 2 \kappa_2 \left( \frac{\tilde{M}_{n,\theta}^-(x)}{\theta^2(n-x)(n-x+1)} \wedge \frac{\tilde{M}_{n,\theta}^+(x)}{(1-\theta)^2 x(x+1)} \right)
  \end{align*} 
  If $h(x) = \mathbb{I}[x = \xi]$ is point mass, we can also use
  Lemma~\ref{lem:pointmass2} because the binomial distribution
  satisfies the conditions (monotonicity of the two ratios for any
  $\xi \in \mathcal{S}(p_{n, \theta})$).  Therefore, the solution of
  equation \eqref{eq:eq-indicator} is also bounded by
  \eqref{eq:supg-indicator}:
\begin{align*}
\| g_\xi\|_\infty \leq \max\left\{\frac{P_{n, \theta}(\xi-1)}{(1-\theta)\xi}, \frac{1-P_{n, \theta}(\xi)}{\theta(n-\xi)}\right\} 
\end{align*}
and the bound \eqref{eq:supder-indicator} becomes
\begin{align}\label{eq:bindmass}
  ||\Delta g_\xi||_\infty &
                            = \frac{P_{n, \theta}(\xi-1)}{(1-\theta)\xi} + \frac{1-P_{n, \theta}(\xi)}{\theta(n-\xi)}
                            \leq
                            \min\left\{\frac{1}{\xi(1-\theta)},\frac{1}{\theta(n-\xi)}\right\}.                               
\end{align}

\medskip
\noindent \emph{Literature review:}
The classic equation for Binomial target is
  \begin{equation*}
        (1-\theta)x g(x) - \theta(n-x) g(x+1)=     h(x) - \mathbb{E}h(X).
      \end{equation*}
      The bound \eqref{eq:bindmass} is of the same order as the
      corresponding bound in \cite[Example
      2.11]{eichelsbacher2008stein}. Moreover, it outperforms the
      uniform bound from \cite[Lemma 1]{ehm1991binomial}.  Our
      non-uniform bound is smaller than the uniform bound in
      \cite{barbour1992poisson} but the expression is not well
      readable.  By \cite[Theorem 1]{diaconis1991closed}, the Mills
      ratio for the binomial distribution satisfies
  $$\frac{x}{n} \leq \frac{1-P(x-1)}{p(x)} \leq
  \frac{x(1-\theta)}{x-n\theta}$$ for $x>n\theta$. Therefore, we easily
  deduce more readable bounds for the ratio
\begin{equation*}
M_{n, \theta}(x) \leq \frac{x(1-\theta)}{x-n\theta} \vee
\frac{(n-x)\theta}{n\theta-x}. 
\end{equation*}
 This could be inserted into the previous bounds to increase their
 readability. 
\end{exm}

\begin{exm}[Negative binomial distribution] \label{ex:NB}  
This distribution  has pmf 
\begin{align*}
  p_{r, \theta}(x) = (1-\theta)^r \theta^x \frac{\Gamma(x+r)}{x!
  \Gamma(r)}, \quad \mathcal{S}(p_{r, \theta}) = \N
\end{align*}
The cdf $P_{r, \theta}$ and survival function $\bar P_{r, \theta}$ do
not bear an explicit expression. The mean is $\theta
r/(1-\theta)$. Simple computations show that
\begin{align*}
  &    \rho^-(x) = 1-\frac{x}{\theta(x-1+r)} &  \rho^+(x) = \frac{(x+r)\theta}{x+1}-1 \\
  & \tau^-(x) = \frac{\theta}{1-\theta}(r+x) &   \tau^+(x) = \frac{1}{1-\theta}x.
\end{align*}
 The Stein equations associated to $\rho^{\pm}$ are, on the one hand,
   \begin{align*}
 \left( 1-\frac{x}{\theta(x-1+r)} \right)  g_1^{-}(x) +
 \Delta^+g_1^{-}(x) =     h(x) - \mathbb{E}h(X) 
 \end{align*}
 with conditions
 \begin{align*}
 \sum_{j=0}^\infty \left| \Delta^- (g_1^-(j+1) p_{r, \theta}(j)) \right| < \infty
 \mbox{ and }
 \lim_{n \to \infty} g_1^-(n+1) p_{r, \theta}(n) = 0 \end{align*}
 and solution
 \begin{align*}
 g_1^-(x) = \frac{1}{p_{r, \theta}(x-1)} \sum_{j=0}^{x-1} (h(j) -
 \mathbb{E}h(X)) p_{r, \theta}(j)
 \end{align*}
  and,  on the other hand,  
 \begin{align*}
\left(\frac{(x+r)\theta}{x+1}-1 \right)  g_1^{+}(x) +
\Delta^-g_1^{+}(x) = h(x) - \mathbb{E}h(X)  
\end{align*}
with conditions
\begin{align*}
\sum_{j=0}^\infty \left| \Delta^+(g_1^+(j-1) p_{r, \theta}(j)) \right| < \infty \mbox{
	and }  g_1^+(-1) p_{r, \theta}(0) = 0
\end{align*}
and solution
\begin{align*}
g_1^+(x) = \frac{1}{p_{r, \theta}(x+1)} \sum_{j=0}^{x} (h(j) -
\mathbb{E}h(X)) p_{r, \theta}(j).
\end{align*} 
These functions satisfy
 \begin{align*}
& |g_1^-(x)| \le \kappa_1 M_{r,\theta}(x-1) ;\\
& |g_1^+(x)|   \le  \kappa_1 M_{r, \theta}(x) \frac{(x+r)\theta}{x+1}  ; \\
& |g_1^-(x)| \le \kappa_2 \frac{\theta (r+x-1)}{1-\theta} ; \\
& |g_1^+(x)| \le \kappa_2  \frac{1}{1-\theta}(x+1); \\
& |\Delta^+{g_1^-}(x)| \le  \kappa_1   \left( 1 + \left|1-\frac{x}{\theta(x-1+r)}\right| M_{r,\theta}(x-1) \right);
\\ 
& |\Delta^-{g_1^+}(x)| \le  \kappa_1   \left( 1 + \frac{|x(1-\theta)+1-r\theta|}{\theta(r+x)}M_{r,\theta}(x) \right);
\\ 
& |\Delta^+g_1^-(x)| \le \kappa_2 \left(\left|x-\frac{\theta r}{1-\theta}\right| + \left|x-\frac{\theta (r-1)}{1-\theta}\right|\right); 
\\
&  |\Delta^- g_1^{+}(x)| \le \kappa_2  \left(\left|x-\frac{\theta r}{1-\theta}\right| + \left|x-\frac{1-r\theta}{1-\theta}\right|\right).
\end{align*}
  The
  Stein equations associated to $\tau^{\pm}$  are
  \begin{align*}
\left(x-\frac{\theta r}{1-\theta}\right)g_2^{-}(x) -\frac{\theta }{1-\theta}(x+r) \Delta^+g_2^{-}(x) = h(x) - \mathbb{E}h(X) 
  \end{align*}
  with condition
  \begin{align*}
\sum_{j=0}^\infty \left| \Delta^+((r+j) g_2^-(j+1) p_{r, \theta}(j))
\right| < \infty
\end{align*}
  and
  \begin{align*}
\left(x-\frac{\theta r}{1-\theta}\right)g_2^{+}(x) -\frac{1 }{1-\theta}x \Delta^-g_2^{+}(x) = h(x) - \mathbb{E}h(X) 
  \end{align*}
  with condition
  \begin{align*}
\sum_{j=0}^\infty \left| \Delta^-(j g_2^+(j-1) p_{r, \theta}(j))
\right| < \infty
\end{align*}
(in both cases the border conditions disappear because of the
premultiplying factor).  
These functions satisfy
\begin{align*}
  g_2^+(x) = g_2^-(x+1) \mbox{ and } |\Delta^+{g_2^-}(x)| = |\Delta^-{g_2^+}(x)| =: |\Delta{g_2}(x)|
\end{align*}
and  
\begin{align*}
& |g_2^+(x)| 
\le   \kappa_1 M_{r, \theta}(x) \frac{ 1-\theta}{\theta(x+r)} \wedge \kappa_2 ; \\
& |\Delta{g_2}(x)|\le  \left( \frac{ \kappa_1(1-\theta) }{\theta(r+x)}  \left(1 + 
\left|\frac{(1-\theta)x-\theta r }{\theta(r+x-1)} \right| M_{r\theta}(x-1)\right)  \right) \\
& |\Delta{g_2}(x)|\le
\left( \frac{ \kappa_1(1-\theta) }{x}  \left(1 + 
\frac{\left|(1-\theta)x-\theta r \right| }{\theta(r+x)} M_{r\theta}(x)\right)  \right)
\\ 
&|\Delta{g_2}(x)|  \le 2  \kappa_2 |(1-\theta)x-r\theta| \left(\frac{1}{\theta(r+x)} \wedge \frac{1}{x}\right) \\
& |\Delta{g_2}(x)|  \le 2 \kappa_2 (1-\theta)^2 \left( \frac{\tilde{M}_{r,\theta}^-(x)}{\theta^2(r+x)(r+x-1)} \wedge \frac{\tilde{M}_{r,\theta}^+(x)}{ x(x+1)} \right)
\end{align*} 
 If, moreover, $h$ is an indicator function, the bound
 \eqref{eq:supder-indicator} becomes
\begin{align}\label{eq:brow}
||\Delta g_\xi||_\infty &
= (1-\theta)\frac{P_{r,\theta}(\xi-1)}{\xi} + (1-\theta)\frac{1-P_{r,\theta}(\xi)}{\theta(r+\xi)}
\leq \min\left\{\frac{1-\theta}{\xi},\frac{1-\theta}{\theta(r+\xi)}\right\} .
\end{align}
For any Borel set $A\subset \mathcal{S}(p_{r, \theta})$, the solution is bounded by \eqref{eq:supg-borel}
\begin{align*}
\|g_A\|_\infty \leq 
\left( \sum_{j\in A}p_{r,\theta}(j) \right) \sup_{\xi\in A}\left\{ \frac{1-\theta}{\xi p_{r,\theta}(\xi)}, \frac{1-\theta}{\theta(r+x) p_{r,\theta}(\xi)}\right\}
\end{align*}
and the bound \eqref{eq:der-borel} gives
\begin{align*}
||\Delta g_A||_\infty &\leq 
(1-\theta) \sup_{x \in A} \left(\frac{P_{r,\theta}(x-1)}{x} + \frac{1-P_{r,\theta}(x)}{\theta(r+x)}\right). 
\end{align*}

\medskip 

\noindent \emph{Literature review:} Something about the he case $\ell=-1$ is the most
developed in the literature (see for instance \cite{brown1999negative,
  brown2001stein, barbour2015stein, cloez2019intertwinings}). The
operator is given in \cite{barbour2015stein} (see their equation
(1.1)).  Bound \eqref{eq:brow} is the bound of \cite[Theorem
2.10]{brown2001stein}, which improves the one of \cite[Lemma
5]{brown1999negative}.
  Something is precisely     the bound (1.3) in
  \cite{barbour2015stein}  
We note that the bound $B_{2, r, \theta}^\pm(x) = 1$ yields
whereas $
    B_{4, r, \theta}$ is of the same order but (strictly)
    uniformly smaller than the corresponding bound (1.4) in
    \cite{barbour2015stein} and similar to the improved version of
    this bound \cite[Prop. 4.4]{cloez2019intertwinings}.  
 
\end{exm}

\section{More bounds on IPMs}
\label{sec:bounds-ipms-1}

In this section we apply the material from Section \ref{sec:discr},
particularly Corollaries \ref{cor:kolbound} and \ref{cor:prec2}, to
two more examples. We conclude with two examples illustrating how the
material can be used in more generality.

  

\begin{exm}[Rayleigh  approximation]
  We wish to compare distributions characterized by
  $ p_{\infty}(x) = 2x e^{-x^2} \mathbb{I}[0 \le x \le \infty], \quad
  P_{\infty}(x) = (1 - e^{-x^2}) \mathbb{I}[0 \le x \le \infty] $
  (Rayleigh distribution, Example \ref{sec:rayl-distr}) and the
  distribution with pdf and cdf
  $p_n(x) = {2}/{n} (n-1) x \left( 1- {x^2}/{n} \right)^{n-2}
  \mathbb{I}[0 \le x \le \sqrt{n}],$
  $P_{n}(x) = 1 - \left( {n}/{(n-x^2)} \right)^{1-n} \mathbb{I}[0 \le
  x \le \sqrt{n}], $ respectively.  We have already computed
  $\rho_{\infty}$, $\tau_{\infty}$ and $M_\infty$. We also immediately
  obtain
\begin{align*} 
  \rho_n(x) -  \rho_\infty(x) = 2x
  \frac{x^2-2}{n-x^2}. 
\end{align*}
Direct computations yield $\kappa_1^{\star} (z) = 0$ for all $z$,
which gives 
\begin{align*}
    \mathrm{Kol}(p_n, p_{\infty}) & \le \int_0^{\sqrt{n}} 2x
    \frac{|x^2-2|}{n-x^2}  \frac{1-e^{-x^2}}{2x}  p_n(x)
    \mathrm{d} x 
   \le \frac{ 2(n-1)}{n^{n-1}} \int_0^{\sqrt{n}} x
   |x^2 - 2| (n-x^2)^{n-3}\mathrm{d} x  
\end{align*}
Using the change of variables $u = x^2$ and separating the integral on
$(0, \sqrt2)$ and $(\sqrt2, \sqrt{n})$ it is possible to compute this
integral to obtain 
\begin{equation*}
   \mathrm{Kol}(p_n, P_{\infty})  \le \frac{1}{n} \left( 1+ 2
     \Big(1-\frac{2}{n}\Big)^{n-2} \right) \le \frac{1.28}{n}
 \end{equation*}
 (the upper bound is valid for $n \ge 100$).  The same bound applies
 for Total Variation distance. Finally for Wasserstein distance,
 direct computations yield (using $|h'|\le 1$),
\begin{align*}
 | \kappa_1^{\star} (h) | \le    \lim_{x \to 0}
  \frac{p_n(x)}{p_{\infty}(x)}  \int_0^{\infty}  P_{\infty}(x \wedge u)
  \bar{P}_{\infty}(x \lor u) \mathrm{d}u  + \lim_{x \to \sqrt n}
  \frac{p_n(x)}{p_{\infty}(x)}\int_0^{\infty}  P_{\infty}(x \wedge u)
  \bar{P}_{\infty}(x \lor u) \mathrm{d}u = 0. 
\end{align*}
We have to endure the non tractable function
$\tau_{\infty}(x)$ in the bound
\begin{equation*}
   \mathrm{Wass}(p_n, p_{\infty}) \le \int_0^{\sqrt{n}} 2x
  \frac{|x^2-2|}{n-x^2}   \tau_{\infty}(x)  p_n(x) \mathrm{d} x
\end{equation*}
Nevertheless using $0 \le \tau_{\infty}(x) \le \frac{1}{2}$ the above
becomes
\begin{align*}
   \mathrm{Wass}(p_n, p_{\infty}) & \le \frac{1}{2}\int_0^{\sqrt{n}} 2x
                                    \frac{|x^2-2|}{n-x^2}   p_n(x)
                                    \mathrm{d} x  
    =\frac{2(n-1)}{n^{n-1}}  \int_0^{\sqrt{n}} x^2
   |x^2-2| 
    \left( n-x^2 \right)^{n-3} \mathrm{d} x.  
\end{align*}
This integral is not as nice as the previous one.  The exact integral
(obtained with the help of mathematica) is
\begin{align*}
  - \frac{\sqrt{\pi} \Gamma(n)}{4 \sqrt{n} \Gamma(n+1/2)} +
2\sqrt{2} \frac{n-1}{n^n}  \frac{(n-2)^n n (40 + 11(n-4)n) +
  (n-2)^3n^n H_2F_1 \left( -\frac{1}{2}, 3-n, \frac{1}{2}, \frac{2}{n} \right) }{(n-2)^2(2n-5)(2n-3)(2n-1)}
\end{align*}
which appears to be quite unfathomable. Numerical evaluations (up to
$n = 10^6$) indicate however that this is slightly less than
$1/n$.

NB. We are indebted to Robert Gaunt for pointing out this problem to
us. For context, details, and alternative computations of similar
quantities, we refer to paper \cite{gaunt2019lap} (in particular
Remark 4.9).

\begin{figure}
  \centering
  \includegraphics[width=0.7\textwidth]{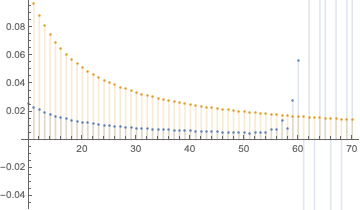}
  \caption{\label{fig:1}Numerical evaluation of the upper bound
    (orange curve) and of the exact Wasserstein distance in (blue
    curve) for $n \in \left\{ 10, 70 \right\}$. The numerical
    evaluation of the Wasserstein distance becomes unstable for
    $n \ge 50$. }
\end{figure}

\end{exm}

   \begin{exm}[Binomial vs Hypergeometric]
     If $X_n\sim \mbox{Hyper}(n,K,N)$ and
     $X_\infty\sim \mbox{Bin}(n,K/N)$, then a direct application of
     classical Stein's method gives the bound $(n-1)/(N-1)$ already
     provided in \cite{ehm1991binomial,holmes2004stein}.  { 
 Equation \eqref{eq:tvbound1} gives
\begin{align*}
TV(\mbox{Bin}(n,K/N), \mbox{Hyper}(n,K,N)) 
&\leq 
\E_H \left[ \left|  \frac{(n - X) (K (n-1) - N X)}{(N-K) (K + n - N - X-1) (1 + X)} 
\right|  \frac{P_B(X)(1-P_B(X))}{p_B(X+1)} \right]
\end{align*}
where the index $H$ denote the expectation computed for the
hypergeometric distribution and the index $B$ is associated to the
binomial distribution.  The bound is not as readable as
\cite{duembgen2019bounding}, who obtain the incredibly elegant
$1-(1-1/N)^{n-1}$.  If we choose the Hypergeometric as target
distribution, we obtain 
\begin{align*}
TV(\mbox{Bin}(n,K/N), \mbox{Hyper}(n,K,N))
&\leq 
\E_{B}\left[ \left| \frac{(n - X) (K (n-1) - N X)}{(N-K) (K + n - N - X-1) (1 + X)} \right| \frac{P_{H}(X)(1-P_{H}(X))}{p_{H}(X+1)} \right].
\end{align*}
The three bounds are graphically compared in Figure \ref{fig:TV-binHyper}.

 \begin{figure}[h]
\centering
  \includegraphics[width=.7\linewidth]{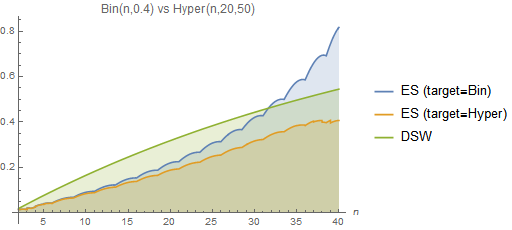}
  \caption{\small Bounds for the total variation distance between the
    binomial $(n,K/N)$ and the hypergeometric $(n,K,N)$ for $K=20$ and
    $N=50$. The blue and orange curve correspond to our bounds, and
    the green one is that from \cite{duembgen2019bounding}. }
  \label{fig:TV-binHyper}
\end{figure}
}
\end{exm}

We conclude with two examples which are outside the scope of our
Corollaries \ref{cor:kolbound} and \ref{cor:prec2}. To prepare for
these, we make some simplifying assumptions. We suppose that
$p_{\infty}$ is continuous ($\ell = 0$) with support either the half
line ($0$ may or may not be included) or the real line. Let
$\mathcal{T}_{\infty}$ and $\mathcal{L}_{\infty}$ be its Stein
operators and set $g_h(x) =
\frac{\mathcal{L}_{\infty}h(x)}{c(x)}$. Next let $X_n$ have operators
$\mathcal{T}_n^{\ell}$ and $\mathcal{L}_n^{\ell}$
($\ell \in \left\{ -1, 0, 1 \right\}$) and suppose that
$\mathcal{S}(p_n)$ has infimum (minimum) $a_n \ge 0$ and supremum
(maximum) $b_n\le \infty$.  Starting again from \eqref{eq:ehehv2}, we
know that for all sufficiently regular functions $c$ we can write
  \begin{align}
    &     \mathbb{E}h(X_n) - \mathbb{E}h(X_{\infty})  = \mathbb{E} \left[
      \mathcal{T}_{\infty}c(X_n)g_h(X_n)
      +  c(X_n) g_h'(X_n) \right] \nonumber  \\
    & =  \mathbb{E} \left[
      ( \mathcal{T}_{\infty}c(X_n) - \mathcal{T}_{n}^{\ell}c(X_n))
     g_h(X_n) \right]  +
      \mathbb{E} \left[ c(X_n)  \left(  g_h'(X_n)  -                   \Delta^{-\ell} g_h(X_n)  \right)\right]
-      \kappa_n^{\ell}(h)  \label{eq:greeeeattrump}
  \end{align}
  where
  \begin{align*}
    \kappa_n^{\ell}(h) =
    \begin{cases}
      - c(a_n)/c(a_n-1) p_n(a_n) \mathcal{L}_{\infty}h(a_n-1) & \mbox{ if } \ell = 1 \\
          \lim_{x \to b_n} \mathcal{L}_{\infty}h(x) p_n(x) -
          \lim_{x \to a_n} \mathcal{L}_{\infty}h(x) p_n(x)   & \mbox{ if
          } \ell = 0 \\
      c(b_n)/c(b_n-1) p_n(b_n) \mathcal{L}_{\infty}h(b_n) & \mbox{ if
      } \ell = -1. 
    \end{cases}
  \end{align*}
  Controlling $g_h$ and $g_h'$ via the results from Sections
  \ref{sec:repr} and \ref{sec:examples} easily leads to bounds on the
  usual probability metrics.

\begin{exm}[Maxima of independent to Fr\'echet]
  \label{ex:maxfre}
  Let the target $p_{\infty}$ be the Fr\'echet distribution studied in
  Example \ref{sec:frechet-distribution} and suppose that $X_n$ has
  continuous distribution (i.e.\ $\ell = 0$).  Then taking
  $c(x) = x^{\alpha+1}$ we have $\mathcal{T}_{\infty}c(x) = \alpha$ so
  that \eqref{eq:greeeeattrump} yields 
  \begin{align*}
    \left|     \mathbb{E}h(X_n) - \mathbb{E}h(X_{\infty}) \right| & =  \left|   \mathbb{E} \left[
                                                                    (\alpha- \mathcal{T}_{n}c_1(X_n))
                                                                    g_h(X_n) \right] 
                                                                    -      \kappa_n^{\ell}(h)  \right| \\
                                                                  &
                                                                    \le   \mathbb{E} \left[
                                                                    \left|\alpha- \mathcal{T}_{n}c_1(X_n)\right|
                                                                    \frac{1- e^{-X_n^{-\alpha}}}{\alpha} \right]  +   \left|
                                                                    \kappa_n^{\ell}(h) \right| \\
                                                                  &\le
                                                                    \mathbb{E} \left[
                                                                    \left|1-
                                                                    \frac{1}{\alpha}\mathcal{T}_{n}c_1(X_n)\right|\right]
                                                                    - \kappa_n^{\ell}(h)
  \end{align*}
  for all $h$ such that $\kappa_1 \le 1$, and therefore also for the
  Kolmogorov distance.  Now suppose that
  $M_n = \max(X_1, \ldots, X_n)$ the maximum of $n$ independent
  positive random variables with pdf $f(x)$, cdf $F(x)$ and support
  $[a, b]$. Set $X_n = M_n/r_n$ for $r_n$ some sequence of normalizing
  constants.  Then $a_n=a/r_n$, $b_n=b/r_n$, $ P_n(x) = F({r_n}x)^n$
  and $p_n(x) = n{r_n} f({r_n}x) F({r_n}x)^{n-1}$ so that
   \begin{align*}
     \mathcal{T}_nc_1(x)  = (\alpha+1) x^{\alpha} + r_n x^{\alpha+1} \left(
\frac{f'(r_nx)}{f(r_nx)} +     (n-1)\frac{f(r_nx)}{Fra_nx)}\right)
   \end{align*}
   for $a/r_n \le x \le b/r_n$. Also
   $\kappa_n^{\ell}(h) = \mathcal{L}_{\infty} h(b^-/r_n) n r_n f(b^-)
   F(b^-)^{n-1} - \mathcal{L}_{\infty} h(a^+/r_n) n r_n f(a^+)
   F(a^+)^{n-1}$.
   If, for instance, we choose
   $F(x) = (1-x^{-\alpha}) \mathbb{I}[x \ge 1]$ the Pareto
   distribution with $r_n = n^{1/\alpha}$ then $a=1$,
   $a_n = n^{-1/\alpha}$, $b=b_n = \infty$, and
   $p_n(x) = \alpha x^{-\alpha-1}(1-x^{-\alpha}/n)^{n-1}$ for
   $x \ge n^{-1/\alpha}$ so that
   $ \kappa_n^{\ell}(h) = 0 $ and 
$     \frac{1}{\alpha}\mathcal{T}_nc_1(x)  = \frac{n-1}{n} \big(
1-   \frac{x^{-\alpha}}{n}\big)^{-1},$ $x \ge n^{-1/\alpha}. 
$ We readily obtain
 \begin{equation*}
   \mathbb{E} \left[ \left| 1 -
       \frac{1}{\alpha}\mathcal{T}_nc_1(X_n)\right| \right] =
   \mathbb{E} \left[ \left| 1 - 
\frac{n-1}{n} \left(
1-   \frac{X_n^{-\alpha}}{n}\right)^{-1}\right| \right]  =
\frac{2}{n-1} \left( 1-\frac{1}{n} \right)^n \le \frac{2/e}{n-1} 
\end{equation*}
independently of $\alpha$.  See also \cite[Section 2.6]{ley2017stein}.
\end{exm}

\begin{exm}[Binomial to normal] \label{ex:binotonormsu} Consider
  $p_{\infty}$ the standard normal density, and $p_n$ the density of
  $X_n$ a standardized binomial with parameters $n, \theta$, that is
  $X_n = (B_n -n\theta)/\sqrt{n \theta (1- \theta)}$ where
  $B_n \sim \mathrm{Bin}(n, \theta)$. Let
  $r_n = \sqrt{n \theta (1-\theta)}$. Then $a_n = -n\theta /r_n$ and
  $b_n = n(1-\theta) / r_n$ and
  \begin{equation*}
    p_n(x) = \binom{n}{n \theta + r_n x} \theta^{n
      \theta + r_n x}(1-\theta)^{n(1-\theta)-r_n x } \mbox{ for } x
    \in \mathcal{S}(p_n) = \left\{ \frac{k-n\theta}{r_n}\mbox{ with } k \in \left\{ 0, \ldots, n
    \right\}\right\}.
  \end{equation*}
  An appropriate derivative in this case is
  $\Delta^\ell_nf(x) = r_n/\ell(f(x+\ell/r_n) - f(x))$, $\ell \in
  \left\{ -1, 1 \right\}$; note that if
  $f$ is twice differentiable then, from Taylor's theorem
\begin{align*}
\Delta^\ell_nf(x) = f'(x) + \frac{1}{r_n}\mathbb{E}\Big[V f''\Big(X+\frac{UV}{r_n}
                                              \Big)
                                              \Big]  
\end{align*}
where $U, V$ are independent uniform on $[0,1]$. 
The
canonical operator  for $p_n$ is 
  \begin{align*}
    \mathcal{T}_n^\ell f(x) & = \frac{\Delta^{\ell}_n (f(x)p_n(x))}{p_n(x)} \\
    & = r_n\left( f\left(x+ \frac{1}{r_n}\right) \frac{\theta}{1-\theta}
    \frac{n(1-\theta) - xr_n}{ n \theta +1 + x
      r_n} - f(x)\right) \qquad (\ell = 1)\\
        & = r_n\left( f(x) - f \left( x - \frac{1}{r_n} \right)
          \frac{1-\theta}{\theta} \frac{n\theta + x
          r_n}{n(1-\theta) 
          +1 
          - x r_n}\right) \qquad (\ell = -1)
  \end{align*}
  for $x \in \mathcal{S}(p_n) $ and $p_n$ satisfies the identities
  \begin{align*}
    \mathbb{E} [ \mathcal{T}_n^\ell c(X_n) f(X_n) + c(X_n)
    \Delta^{-\ell}_nf(X_n)] = \kappa_n^{\ell}(c, f)
  \end{align*}
  with
  $\kappa_n^{+}(c,f) =
  -c(-n\theta/r_n)f((-n\theta-1)/r_n)p_n(-n\theta/r_n)$ and
  $\kappa_n^{-}(c,f) =
  c(n(1-\theta)/r_n)f(((n(1-\theta)+1)/r_n)p_n(n(1-\theta)/r_n)$.  If
  we pick $c(x) = 1$ then, after some simplifications, 
\begin{align*}
 \mathcal{T}_{\infty}c(x)
                                              -
                                              \mathcal{T}_{n}^+c(x)
 &  =-x -r_n\left( 
    \frac{r_n^2 - xr_n \theta}{ r_n^2 + x
   r_n (1-\theta)  + (1-\theta)} - 1 \right) \\
    & =-x +  r_n\left(    \frac{ x r_n  -(1-\theta)}{r_n^2 + xr_n 
      (1-\theta) + (1-\theta)} \right)\\
          & =-x +  x \left(    \frac{  r_n^2  -(1-\theta)r_n/x}{r_n^2 + xr_n 
            (1-\theta) + (1-\theta)} \right)\\
  & = -(1-\theta) \frac{(x^2+1)r_n +x}{r_n^2 + xr_n 
            (1-\theta) + (1-\theta)}.
\end{align*}
This function is negative throughout $\mathcal{S}(p_n)$ and
explicit  computations (we use Mathematica) inform us that 
\begin{align*}
  \mathbb{E} \left[ \left| \mathcal{T}_{\infty}c(x)
                                              -
                                              \mathcal{T}_{n}^+c(x)
  \right| 
  \right] \le  2 \sqrt{\frac{1}{\theta}-1} \frac{1}{\sqrt n} 
\end{align*}
(the exact expression is not very enlightening).  With obvious
accommodations to the notations, we have
$\kappa_n^+(h) = (1-\theta)^n \mathcal{L}_{\infty}h(-n\theta/r_n)$. 
For the sake of brevity we only consider the case of Wasserstein
distance with $h(x)$ Lipschitz. Then $|g_h(x)| \le 1$ and
$|g_h''(x)|\le 2$ (this result is available e.g.\ from \cite[Lemma
2.4]{chen2010normal}) so that
\begin{align*}
  \mathrm{Wass}(p_n, p_{\infty}) \le  2 \sqrt{\frac{1}{\theta}-1} \frac{1}{\sqrt n} 
+ \frac{2}{\sqrt{n \theta(1-\theta)}}   + (1-\theta)^n. 
\end{align*}
We could also obtain rates in the Kolmogorov and Total Variation
distances, but this would require more work for what is, ultimately,
only a proof of concept.  As far as we are aware, the first to have
performed Stein's method of comparison of generators for comparing a
discrete and a continuous distribution are \cite{goldstein2013stein}.
\end{exm}

\

\noindent (M. Ernst) \textsc{D\'epartement de Math\'ematique,
  Facult\'e des Sciences, Universit\'e de Li\`ege, Belgium}
 
\

\noindent (Y. Swan) \textsc{D\'epartement de Math\'ematique,
  Facult\'e des Sciences, Universit\'e libre de Bruxelles, Belgium}

\

\noindent\emph{E-mail address}, M. Ernst {\tt m.ernst@uliege.be }

\noindent\emph{E-mail address}, Y. Swan {\tt yvswan@ulb.ac.be }
\clearpage

\end{document}